\documentclass[12pt]{amsart}
\title{Galois groups of large simple fields}
\author{Anand Pillay}
\address{University of Notre Dame}
\email{Anand.Pillay.3@nd.edu}
\author{Erik Walsberg}
\address{University of California, Irvine}
\email{ewalsber@uci.edu}
\urladdr{math.uci.edu/~ewalsber/}
\makeatletter
\let\@wraptoccontribs\wraptoccontribs
\makeatother
\contrib[with an appendix by]{Philip Dittmann}
\address{Technische Universität Dresden}
\email{philip.dittmann@tu-dresden.de}

\usepackage{amsmath, amssymb, amsthm}    
\usepackage{enumitem}
\usepackage[Symbol]{upgreek}
\usepackage{fullpage} 	
\usepackage{xcolor}

\DeclareFontFamily{U}{BOONDOX-calo}{\skewchar\font=45 }
\DeclareFontShape{U}{BOONDOX-calo}{m}{n}{
  <-> s*[1.05] BOONDOX-r-calo}{}
\DeclareFontShape{U}{BOONDOX-calo}{b}{n}{
  <-> s*[1.05] BOONDOX-b-calo}{}
\DeclareMathAlphabet{\mathcalboondox}{U}{BOONDOX-calo}{m}{n}
\SetMathAlphabet{\mathcalboondox}{bold}{U}{BOONDOX-calo}{b}{n}
\DeclareMathAlphabet{\mathbcalboondox}{U}{BOONDOX-calo}{b}{n}

\DeclareMathOperator*{\forkindep}{\raise0.2ex\hbox{\ooalign{\hidewidth$\vert$\hidewidth\cr\raise-0.9ex\hbox{$\smile$}}}}

\newcommand{\sopinfty}{\mathrm{SOP}_\infty}
\newcommand{\nsopinfty}{\mathrm{NSOP}_\infty}

\newcommand{\pac}{\mathrm{PAC}}

\newcommand{\kalg}{K^{\mathrm{alg}}}

\newcommand{\Spec}{\operatorname{Spec}}

\newcommand{\acl}{\operatorname{acl}}

\newcommand{\tp}{\operatorname{tp}}

\newcommand{\Chara}{\operatorname{Char}}
\newcommand{\jac}{\operatorname{Jac}}

\newcommand{\br}{\operatorname{Br}}

\newtheorem*{claim-star}{Claim}
\newtheorem{theorem}{Theorem}[section] 
\newtheorem{lemma}[theorem]{Lemma}

\newtheorem{prop-def}[theorem]{Proposition-Definition}
\newtheorem{corollary}[theorem]{Corollary}
\newtheorem{fact}[theorem]{Fact}
\newtheorem{fact-eh}[theorem]{Fact(?)}

\newtheorem{conjecture}[theorem]{Conjecture}

\newtheorem{proposition}[theorem]{Proposition}
\newtheorem{proposition-eh}[theorem]{Proposition(?)}
\newtheorem*{theorem-star}{Theorem}
\newtheorem*{conjecture-star}{Conjecture}
\newtheorem*{lemma-star}{Lemma}

\theoremstyle{definition}

\newtheorem{remark}[theorem]{Remark}
\theoremstyle{remark}
\newtheorem{claim}[theorem]{Claim}

\newcommand{\mfrakv}{\mathfrak{m}_v}
\newcommand{\Aa}{\mathbb{A}}

\newcommand{\Gg}{\mathbb{G}}
\newcommand{\Qq}{\mathbb{Q}}

\newcommand{\Zz}{\mathbb{Z}}

\newcommand{\nsopone}{\mathrm{NSOP}_1}

\newcommand{\meno}{\medskip \noindent}
\newcommand{\veno}{\vspace{2mm}\noindent}


\setlength{\parindent}{0cm}

\begin{document}
\maketitle

\begin{abstract}
Suppose that $K$ is an infinite field which is large (in the sense of Pop ~\cite{pop-embedding}) and whose first order theory is simple.
We show that $K$ is {\em bounded}, namely has only finitely many separable extensions of any given finite degree. 
We also show that any genus $0$ curve over $K$ has a $K$-point and if $K$ is additionally perfect then $K$ has trivial Brauer group.
These results give evidence towards the conjecture that large simple fields are bounded PAC.
Combining our results with a theorem of Lubotzky and van den Dries we show that there is a bounded $\mathrm{PAC}$ field $L$ with the same absolute Galois group as $K$.
In the appendix we show that if $K$ is large and $\nsopinfty$ and $v$ is a non-trivial valuation on $K$ then $(K,v)$ has separably closed Henselization, so in particular the residue field of $(K,v)$ is algebraically closed and the value group is divisible.
The appendix also shows that formally real and formally $p$-adic fields are $\sopinfty$ (without assuming largeness).
\end{abstract}

\section{Introduction}
Throughout $K$ is a field.
Large fields were introduced by Pop ~\cite{pop-embedding}, one definition being that $K$ is \textbf{large} if any algebraic curve defined over $K$ with a smooth (nonsingular) $K$-point has infinitely many $K$-points.
Finite fields, number fields, and function fields are not large.
Local fields, Henselian fields, quotient fields of Henselian domains, real closed fields, separably closed fields, pseudofinite fields, infinite algebraic extensions of finite fields, and fields which satisfy a local-global principle (in particular pseudo real closed and pseudo $p$-adically closed fields) are all large.
All infinite fields whose first order theory is known to be ``tame" or well-behaved in various senses, are large.
Let $K^{\mathrm{sep}}$ be a separable closure of $K$.
We say that $K$ is \textbf{bounded} if for any $n$ there are only finitely many degree $n$ extensions of $K$ in $K^\mathrm{sep}$, equivalently if the absolute Galois group  $\mathrm{Aut}(K^{\mathrm{sep}}/K)$ of $K$ has only finitely many open subgroups of any given finite index.
When $K$ is also perfect, this is also called Serre's property (F). 
(Other authors use ``bounded" to mean that $K$ has only finitely many extensions of each degree.)
Koenigsmann has conjectured that bounded fields are large~\cite[p.~496]{JK}.

\veno
Recall that $K$ is \textbf{pseudoalgebraically closed ($\mathrm{PAC}$)} if any geometrically integral $K$-variety $V$ has a $K$-point (and hence the set $V(K)$ of $K$-points is Zariski dense in $V$).
We mention in passing that a $\mathrm{PAC}$ field need not be perfect, if $p = \Chara(K)$, and $a\in K$ is not a $p$th power, then $\Spec K[x]/(x^p - a)$ is not geometrically integral, see \cite[Example 2.2.9]{poonen-qpoints}.
$\mathrm{PAC}$ fields are large, by definition.
$\mathrm{PAC}$ fields were introduced by Ax~\cite{ax-finite} who showed that pseudofinite fields are bounded $\mathrm{PAC}$.
Infinite algebraic extensions of finite fields are also bounded $\mathrm{PAC}$, see~\cite[11.2.4]{field-arithmetic}.
In either case $\mathrm{PAC}$ follows from the Hasse-Weil estimates.

\veno
On the model theoretic side, we have various ``tame" classes of first order theories $T$, the most ``perfect" being stable theories, and some others being simple theories and $\mathrm{NIP}$ theories.
It is a well-known theorem of Shelah that a theory is stable if and only if it is both simple and $\mathrm{NIP}$.
Good examples come from theories of fields.
We say that a first order structure, in particular a field, is stable (simple, NIP) if its theory is stable (simple, $\mathrm{NIP}$).
Separably closed fields are stable and bounded $\mathrm{PAC}$ fields are simple.
There is a considerable amount of work on $\mathrm{NIP}$ fields, which include real closed and $p$-adically closed fields, but this will not concern us in the present paper.
We now recall two longstanding open conjectures.

\begin{conjecture}
\label{conj:main}\quad
\begin{enumerate}
\item Infinite stable fields are separably closed.
\item Infinite simple fields are bounded $\mathrm{PAC}$.
\end{enumerate}
\end{conjecture}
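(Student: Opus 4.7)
Since Conjecture~\ref{conj:main} comprises two longstanding open problems, what follows is not a proof but the line of attack I would pursue, largely mirroring the strategy this paper adopts for partial results.

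I would first attempt to reduce (1) to (2). Because stable equals simple plus NIP (Shelah), an infinite stable field $K$ would, granted (2), be bounded PAC and NIP, and it is classical that an NIP PAC field must be separably closed: any nontrivial finite Galois extension of a PAC field yields enough combinatorial complexity on the $K$-points of Brauer--Severi varieties and their twists to violate NIP. So (1) follows from (2), and the entire difficulty lies in (2).

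For (2), the plan is to separate geometric content from model-theoretic content. First, I would try to prove that every infinite simple field is large. Then, with largeness in hand, I would derive bounded PAC in stages: (i) boundedness of the absolute Galois group of $K$, by using forking to control the combinatorics of finite Galois orbits; (ii) verification that the most basic obstructions to PAC vanish --- genus $0$ curves have $K$-points, $\br(K)$ is trivial in the perfect case, and the Henselization of $(K,v)$ is separably closed for every nontrivial valuation $v$ on $K$; and (iii) bootstrapping from these low-dimensional fragments to full PAC, either directly via forking-independence of generic points on a geometrically integral variety (propagating a single smooth $K$-point outward through independent specializations), or indirectly by comparing $K$ to a genuine bounded PAC field of the same absolute Galois group (produced, say, by a result of Lubotzky--van den Dries) and transferring existence of rational points.

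The hard part will be the very first step, the passage simple $\Rightarrow$ large. Simplicity is an intrinsically combinatorial condition on types, while largeness is a geometric statement about curves, and no existing machinery bridges them directly; one would plausibly need either to definably extract a Henselian valuation from simplicity (in the spirit of Koenigsmann's conjecture that bounded implies large) or to produce many $K$-points on a curve from a single smooth one by a forking argument. A secondary serious obstacle is stage (iii): moving from the listed low-dimensional fragments of PAC to PAC itself is a genuine leap in geometric generality, since the analogous implication fails for pseudo real closed and pseudo $p$-adically closed fields without the large hypothesis being tightened in subtle ways. For these reasons the natural intermediate theorem --- and, per the abstract, what the present paper actually delivers --- is the conditional result that \emph{large} infinite simple fields are bounded, satisfy the listed fragments of PAC, and share an absolute Galois group with an honest bounded PAC field, leaving open only the geometric leap to full PAC and the model-theoretic leap from simple to large.
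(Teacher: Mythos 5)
This statement is one of the two longstanding open conjectures that the paper explicitly does \emph{not} prove; there is no proof in the paper to compare against, and your proposal, as you say yourself, is a research program rather than a proof. As an assessment of the situation it is accurate: the paper's actual contribution is exactly the conditional package you describe in your final paragraph --- assuming largeness, it proves boundedness (Theorem~\ref{thm:bounded}), projectivity of the absolute Galois group (Theorem~\ref{thm:projective}), the low-dimensional fragments of PAC (Theorems~\ref{thm:brauer}, \ref{thm:conics}, Proposition~\ref{prop:henselian}), and then invokes Lubotzky--van den Dries to produce a bounded PAC field with the same absolute Galois group (Theorem~\ref{thm:main}). The two gaps you isolate (simple $\Rightarrow$ large, and passing from the fragments to full PAC) are precisely the ones the paper leaves open.

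Two small corrections to your sketch. First, your reduction of (1) to (2) is sound in outcome but the mechanism you cite is off: the standard fact is Duret's theorem that a PAC field which is not separably closed has the independence property (the witnessing formulas come from curves and Galois-theoretic data, not from Brauer--Severi twists); alternatively, granted (2), a stable field is bounded and one can quote Poizat's theorem that infinite bounded stable fields are separably closed, which is the route the paper itself takes for the large case (Section~\ref{section:stable}). Second, your stage (i) understates what is actually needed: the paper's boundedness argument is not purely a forking control of ``finite Galois orbits'' but rests on the geometric input of Theorem~\ref{thm:krasner} (each isomorphism class of degree-$n$ separable extensions is cut out by an \'etale image, hence is $f$-generic by largeness), and it is exactly here that largeness enters irreplaceably; without it no one knows how to make the equivalence classes $f$-generic, which is why simple $\Rightarrow$ bounded is itself open.
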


Our general idea is that Conjecture~\ref{conj:main} is both true and tractable after making the additional assumption of largeness. 
It is shown in \cite{JTWY} that a large stable field is separably closed.
We describe another proof of this result in Section~\ref{section:another-proof}.
Here we consider (2), and prove:

\begin{theorem}
\label{thm:main}
Suppose that $K$ is large and simple.
Then there is a bounded $\mathrm{PAC}$ field $L$ of the same characteristic as $K$ such that the absolute Galois group of $L$ is isomorphic (as a topological group) to the absolute Galois group of $K$.
\end{theorem}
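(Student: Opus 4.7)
The plan is to apply a theorem of Lubotzky and van den Dries (see also \cite{field-arithmetic}) which realizes every projective profinite group, up to topological isomorphism, as the absolute Galois group of some $\pac$ field; the characteristic of this $\pac$ field can be prescribed freely. In such a correspondence, the resulting $\pac$ field is bounded exactly when the given profinite group has only finitely many open subgroups of each finite index. Consequently, to prove the theorem it suffices to verify that $G_K := \Gal(K^{\mathrm{sep}}/K)$ is projective, since its smallness (i.e.\ the boundedness of $K$) is the first conclusion listed in the abstract.

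To establish projectivity of $G_K$, I would use the standard characterization: a profinite group is projective if and only if its cohomological dimension is at most one, and for the absolute Galois group of a \emph{perfect} field this is further equivalent to the vanishing of $\br(K')$ for every finite separable extension $K'$. Now any such $K'$ is again large (largeness passes to algebraic extensions, by Pop) and simple (finite extensions of simple fields are simple), and if $K$ is perfect then so is $K'$. Hence by the Brauer-group conclusion listed in the abstract, $\br(K') = 0$ for every finite separable extension $K'/K$, so the cohomological dimension of $G_K$ is at most one and $G_K$ is projective, completing the perfect case.

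To treat the imperfect positive characteristic case I would pass to the perfect closure $K^{\mathrm{perf}}$: it shares the same absolute Galois group as $K$ and remains large. The main obstacle is that I would need $K^{\mathrm{perf}}$ to be simple as well, so that the argument of the previous paragraph applies to it; simplicity is not automatically preserved by infinite purely inseparable algebraic extensions. I would approach this by realizing $K^{\mathrm{perf}}$ as a directed union of structures interpretable in $K$ (namely the finite purely inseparable subextensions, each of which is interpretable in $K$ and hence simple) and invoking preservation of simplicity under such unions. Once projectivity of $G_K$ is established, Lubotzky--van den Dries supplies the desired bounded $\pac$ field $L$ of the required characteristic.
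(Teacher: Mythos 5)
Your overall architecture is the same as the paper's: prove $K$ is bounded, prove $G_K$ is projective, then invoke Lubotzky--van den Dries to produce a $\pac$ field $L$ with $\Gal(L^{\mathrm{sep}}/L)\cong G_K$, noting that boundedness of $L$ is read off from the profinite group. Your treatment of the perfect case is also the paper's original argument. The gap is in your reduction of the imperfect case to the perfect one. Passing to $K^{\mathrm{perf}}$ does preserve largeness and the absolute Galois group, but your plan to get simplicity of $K^{\mathrm{perf}}$ by writing it as a directed union of the fields $K^{1/p^n}$ (each interpretable in $K$, hence simple) and ``invoking preservation of simplicity under such unions'' does not work: there is no such preservation theorem. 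The inclusions $K^{1/p^n}\hookrightarrow K^{1/p^{n+1}}$ are not elementary embeddings (an element of $K$ with no $p$th root in $K$ acquires one at the next stage), and the union of a non-elementary chain of simple structures need not be simple --- indeed every structure is a directed union of its finitely generated (often trivially simple) substructures. Whether the perfect closure of a large simple field is simple is, as far as the paper indicates, open; the authors explicitly say an earlier draft only handled the perfect case and that Dittmann supplied the fix, which is not the perfect-closure route.

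The actual fix avoids perfection entirely by arguing prime by prime on cohomological dimension (Proposition~\ref{prop:coho}). For $p=\Chara(K)$, \emph{every} field has $p$-cohomological dimension $\le 1$ (\cite[II.2.2 Proposition 3]{serre-coho}), so nothing needs to be proved there. For $p\ne\Chara(K)$, one only needs $\br_p L=0$ for all finite separable extensions $L$ of $K$ (\cite[II.2.3 Proposition 4]{serre-coho}), and this follows from the statement that $\{b^p+ac^p : b,c\in L^*\}\supseteq L^*$ for all such $L$ and all $a\in L^*$ (Fact~\ref{fact:psw}), which is exactly Corollary~\ref{cor:power-sum} applied to $L$ --- no perfection hypothesis is used anywhere. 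Then $\operatorname{cd}(G_K)\le 1$, hence $G_K$ is projective by Gruenberg. If you replace your perfect-closure paragraph with this prime-by-prime argument, your proof is complete and coincides with the paper's.
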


Note that Theorem~\ref{thm:main} implies that $K$ is bounded.
We prove this separately.


\begin{theorem}
\label{thm:bounded}
If $K$ is large and simple then $K$ is bounded.
\end{theorem}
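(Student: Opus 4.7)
The plan is to argue by contradiction: assume $K$ is large and simple but admits infinitely many degree-$n$ separable extensions $L_1, L_2, \ldots \subseteq K^{\mathrm{sep}}$ for some $n \geq 2$.

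The first step is to express this failure of boundedness as a first-order statement about a definable family. Monic degree-$n$ polynomials form the definable family $p_{\bar a}(x) = x^n + a_{n-1}x^{n-1} + \cdots + a_0$ parametrized by $\bar a \in K^n$; the subset $I_n \subseteq K^n$ of tuples with $p_{\bar a}$ irreducible is definable; and the relation ``$K[x]/(p_{\bar a}) \cong_K K[x]/(p_{\bar b})$''---rewritten as ``$p_{\bar a}$ has a root in the $K$-algebra $K[x]/(p_{\bar b})$'' via multiplication tables on coordinates---is a definable equivalence relation $E$ on $I_n$ with infinitely many classes.

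The core of the argument is then to exploit simplicity by producing a formula witnessing the tree property $\mathrm{TP}$ or its variant $\mathrm{TP}_2$, either of which is forbidden in a simple theory. After pigeonholing on transitive subgroups of $S_n$ one may assume the Galois closures of the $L_i$ share a common Galois group $G$. One then arranges the data of the $L_i$'s, together with cocycles in $H^1(K, S_n)$, into a tree- or array-shaped configuration of definable parameters: inequivalence of the $L_i$'s drives the sibling-inconsistency at each level (or row-inconsistency), while largeness---applied to smooth geometrically irreducible varieties built from the $L_i$'s (Weil restrictions, norm-one tori, fibre products)---drives branch-consistency (or column-consistency) by iteratively producing $K$-points extending partial configurations.

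The main obstacle is twofold. Arithmetically, the naive ``is-a-root-of'' formula is individually inconsistent in the monster model (since irreducible polynomials of degree $>1$ have no roots in $K$, and by elementarity none in its monster), so the formula witnessing $\mathrm{TP}$ or $\mathrm{TP}_2$ cannot be the obvious one---it must instead be built, e.g., out of norm forms, partial factorization patterns, or values of such forms, so that each individual instance is consistent yet the collective configuration fails simplicity. Geometrically, one must verify at each stage that the auxiliary varieties carrying the partial configurations are geometrically irreducible and smooth with a visible $K$-point, so that largeness supplies enough $K$-rational witnesses to extend. Coordinating both difficulties, uniformly across the tree-building stage, is the technical heart of the argument.
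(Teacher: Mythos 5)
Your reduction is exactly right: boundedness amounts to showing that the definable equivalence relation $\approx$ on the definable set $U\subseteq K^n$ of coefficient tuples of separable irreducible monic polynomials has only finitely many classes. But from that point on your proposal is a plan rather than a proof, and the plan defers precisely the steps that are hardest. You concede that the obvious ``is-a-root-of'' formula cannot witness $\mathrm{TP}$ or $\mathrm{TP}_2$ and that some unspecified formula ``built out of norm forms, partial factorization patterns, or values of such forms'' must be found, and you likewise leave open how largeness would supply the branch- or column-consistency (largeness only produces many $K$-points on varieties that already have a smooth $K$-point, so you would still owe a proof that your auxiliary varieties have one). No such formula or configuration is exhibited, so there is a genuine gap: the argument as written cannot be checked or completed.

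The paper's route is different and is worth contrasting, because it supplies exactly the concrete input your sketch is missing. The key lemma is a definable analogue of Krasner's lemma: each $\approx$-class contains, through each of its points $a$, a set of the form $G(W(K))$ for an \'etale morphism $G:W\to\Aa^n$ (an ``EE set''). This is proved by an explicit construction --- parametrize elements $\beta(b)=b_0+b_1\alpha+\cdots+b_{n-1}\alpha^{n-1}$ of $K(\alpha)$, map $b$ to the coefficient tuple of the minimal polynomial of $\beta(b)$ via elementary symmetric functions of the conjugates $\beta_i(b)$, and check that the Jacobian at $(0,1,0,\ldots,0)$ is invertible because it factors as a product of a Vandermonde matrix in the distinct roots $\alpha_i$ and the Jacobian of the elementary symmetric polynomials, whose determinant is $\prod_{i<j}(\alpha_i-\alpha_j)\ne 0$. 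Largeness then enters only through the soft fact that any finite intersection of multiplicative translates of a nonempty EE set containing $\overline{0}$ is Zariski dense, hence nonempty; this forces every nonempty EE subset of $K^n$ to be $f$-generic for $(K^n,+)$ in the simple theory. Finally, a definable equivalence relation all of whose classes are $f$-generic has finitely many classes (otherwise some class has a canonical parameter outside the algebraic closure of the defining parameters, and an indiscernible sequence of its conjugates gives pairwise inconsistent instances, so the class-defining formula divides --- contradicting genericity). Your ``sibling-inconsistency'' intuition is the germ of this last step, but without the Krasner-type lemma and the genericity of EE sets there is nothing to contradict the dividing configuration, and the proof does not close.
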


The assumption that $K$ is simple can be replaced by the more general assumption that the field $K$ is definable in some model $M$ of a simple theory.  If we also require $M$ to be highly saturated we can take $K$ to be type-definable (over a small set of parameters) in $M$.
The latter will follow from our proofs and references and we will not talk about it again. 
Theorem~\ref{thm:bounded} generalizes the theorem of Chatzidakis that a simple $\mathrm{PAC}$ field is bounded, which is proven via quite different methods in \cite{Chatzidakis}.
Poizat ~\cite{Poizat-generic} proved that an infinite stable bounded field is separably closed.
Combining Poizat's result with Theorem \ref{thm:bounded} we get the above mentioned result of ~\cite{JTWY} that large stable fields are separably closed.

\veno
Theorem~\ref{thm:bounded} is reasonably sharp.
The restriction to separable extensions is necessary.
If $K$ is separably closed of infinite imperfection degree and $\Chara(K) = p > 0$ then $K$ is large, stable, and has infinitely many extensions of degree $p$.
There is an emerging body of work on a generalization of simplicity known as $\mathrm{NSOP_1}$.
All known $\nsopone$ fields are $\pac$.
Theorem~\ref{thm:bounded} fails over $\mathrm{NSOP_1}$ fields as there are unbounded $\mathrm{PAC}$ $\mathrm{NSOP_1}$ fields (equivalently: there are $\mathrm{PAC}$ fields that are $\mathrm{NSOP}_1$ but not simple).
For example if $K$ is characteristic zero, $\mathrm{PAC}$, and the absolute Galois group of $K$ is a free profinite group on $\aleph_0$ generators then $K$ is unbounded and $\mathrm{NSOP}_1$~\cite[Corollary 6.2]{cr-tree}.

\veno
A profinite group $G$ is \textbf{projective} if any continuous surjective homomorphism $H \to G$, $H$ profinite, has a section.
Ax showed that the absolute Galois group of a perfect $\mathrm{PAC}$ field is projective~\cite{ax-finite}, Jarden proved this for non-perfect $\pac$ fields~\cite[Lemma 2.1]{jarden-projective}.

\begin{theorem}
\label{thm:projective}
If $K$ is large and simple then the absolute Galois group of $K$ is projective.
\end{theorem}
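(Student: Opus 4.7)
The plan is to prove that $G_K = \Gal(K^{\mathrm{sep}}/K)$ has cohomological dimension at most one; for profinite groups this is equivalent to projectivity. By the standard criterion of Serre, $\mathrm{cd}(G_K)\leq 1$ is equivalent to the vanishing of $\br(L)[\ell^\infty]$ for every prime $\ell$ and every finite separable extension $L/K$. Since $G_K\cong G_{K^{\mathrm{perf}}}$ and the Brauer group of any perfect field of characteristic $p$ has no $p$-torsion, the $p$-primary part poses no obstruction in any characteristic; the substantive task is the prime-to-characteristic part of the Brauer group of each finite separable extension of $K$.

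The first step is the observation that every finite separable extension $L/K$ is again both large and simple. Largeness of finite extensions of large fields is due to Pop. Simplicity is inherited because $L$ is interpretable in $K$ as a finite-dimensional $K$-vector space equipped with a definable multiplication, and the theory of an interpreted structure in a simple theory is simple; restricting to the pure field reduct preserves simplicity. Hence $L$ is a large simple field in its own right.

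In the case when $K$ is perfect, every finite separable $L/K$ is also perfect, and the Brauer-triviality theorem for large simple perfect fields quoted in the abstract (and proved earlier in the paper) gives $\br(L)=0$. Ranging over all finite separable $L/K$ and all primes $\ell$ yields $\mathrm{cd}(G_K)\leq 1$, so $G_K$ is projective. This handles the original version of the theorem, where perfectness of $K$ is assumed.

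For the non-perfect case (Dittmann's contribution), I would exploit both $G_K\cong G_{K^{\mathrm{perf}}}$ and the well-known fact that, for $\ell\neq \Chara(K)$, the $\ell$-primary Brauer group is invariant under purely inseparable extensions: $\br(L)[\ell^\infty]\cong \br(L^{\mathrm{perf}})[\ell^\infty]$ for every finite separable $L/K$. One would then like to invoke the Brauer-vanishing theorem directly on the perfect field $L^{\mathrm{perf}}$, and the main obstacle is precisely that $L^{\mathrm{perf}}$ is not known to be simple, which is why perfectness entered the earlier version as a hypothesis. I would aim to circumvent this by revisiting the proof of Brauer triviality in the perfect case and isolating exactly where perfectness is used: if, as one expects, perfectness only serves to rule out $p$-torsion phenomena, then the same argument already yields the prime-to-$p$ part of $\br(L)$ for all finite separable $L/K$ with no perfectness hypothesis on $L$. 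Combined with the automatic vanishing of the $p$-part after passing to $L^{\mathrm{perf}}$, this would give $\mathrm{cd}(G_K)\leq 1$ and hence projectivity in complete generality.
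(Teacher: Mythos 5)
Your proposal follows essentially the same route as the paper: show that every finite separable extension $L$ of $K$ is again large and simple, deduce the power-sum/Brauer-vanishing statement for each such $L$, and conclude $\mathrm{cd}(G_K)\le 1$ via Serre's criterion and Gruenberg's theorem. The one step you leave as an expectation---that perfectness in the Brauer-vanishing argument is needed only for the $p$-primary part---is exactly what the paper records as Fact~\ref{fact:psw}: the proof of \cite[Theorem 4.6]{supersimple-field} yields $\br_p L=0$ for every prime $p\ne\Chara(K)$ and every finite separable $L/K$ from the power-sum condition alone, with no perfectness hypothesis, so your plan does go through (the paper then disposes of $p=\Chara(K)$ directly via \cite[II.2.2, Proposition 3]{serre-coho} rather than by passing to $K^{\mathrm{perf}}$).
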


Theorem~\ref{thm:main} follows from Theorem~\ref{thm:bounded}, Theorem~\ref{thm:projective}, and the theorem of Lubotzky and van den Dries that for any field $K$ and projective profinite group $G$ there is a $\pac$ field extension $L$ of $K$ such that the absolute Galois group of $L$ is isomorphic to $G$, see \cite[Corollary 23.1.2]{field-arithmetic}.
An earlier version of this paper proved Theorem~\ref{thm:projective} under the additional assumption that $K$ is perfect.
Philip Dittmann showed us how to remove this assumption.

\begin{theorem}
\label{thm:brauer}
Suppose that $K$ is perfect, large, and simple.
Then the Brauer group of $K$ is trivial.
It follows that
\begin{enumerate}
\item any finite dimensional division algebra over $K$ is a field, and
\item any Severi-Brauer $K$-variety $V$ has a $K$-point.
\end{enumerate}
\end{theorem}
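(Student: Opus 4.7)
The plan is to reduce the main statement to Theorem~\ref{thm:projective} via standard Galois cohomology, after which the two consequences fall out of the basic dictionary between central simple algebras, Severi--Brauer varieties and the Brauer group. Write $G_K = \Aut(K^{\mathrm{sep}}/K)$. By Theorem~\ref{thm:projective}, $G_K$ is projective, and a classical characterization of projectivity (Serre, \emph{Cohomologie Galoisienne}, I, \S 3) gives $\mathrm{cd}_p(G_K) \leq 1$ for every prime $p$; equivalently $H^2(G_K, M) = 0$ for every finite $G_K$-module $M$.

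Since $K$ is perfect, $K^{\mathrm{sep}} = \kalg$, and for every algebraic extension $L/K$ the absolute Galois group $G_L$ is an open subgroup of $G_K$, hence is itself projective. A standard consequence (Serre, \emph{op.\ cit.}, II, \S 2) is that $\br(L) = 0$ for every algebraic extension $L/K$: for $n$ coprime to $\Chara(K)$ this follows from the Kummer sequence $1 \to \mu_n \to (\kalg)^\times \to (\kalg)^\times \to 1$ together with vanishing of $H^2(G_L, \mu_n)$, and the $\Chara(K)$-primary part is handled analogously via the Artin--Schreier sequence; since $\br(L)$ is torsion this suffices. In particular $\br(K) = 0$.

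For consequence (1), let $D$ be a finite-dimensional division algebra over $K$ with center $F$. Then $F/K$ is a finite (separable) extension and $D$ is central simple over $F$; since $\br(F) = 0$ by the previous paragraph, $D \cong M_n(F)$ for some $n$, and being a division algebra forces $n = 1$, so $D = F$ is a field. For consequence (2), Severi--Brauer $K$-varieties of dimension $n - 1$ correspond bijectively to Brauer classes of central simple $K$-algebras of degree $n$, and such a variety $V$ has a $K$-point if and only if the associated class in $\br(K)$ is trivial (in which case $V \cong \Pp^{n-1}_K$); since $\br(K) = 0$, every Severi--Brauer $K$-variety has a $K$-point.

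The substance of the theorem already sits in Theorem~\ref{thm:projective}; once that is in hand the remaining steps are textbook Brauer-group theory, and we do not anticipate any real obstacle. The one point to watch is that the perfectness hypothesis is genuinely used twice: first to ensure $\br(L) = H^2(G_L, (\kalg)^\times)$ for every algebraic $L/K$ (so that projectivity of $G_L$ really forces the full Brauer group to vanish), and second so that the center $F$ in consequence (1) is automatically separable over $K$.
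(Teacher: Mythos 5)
Your argument is correct in outline but runs in the opposite logical direction from the paper, and one step is wrong as stated. The paper does not deduce Theorem~\ref{thm:brauer} from Theorem~\ref{thm:projective}; it goes the other way around. Its proof of Theorem~\ref{thm:brauer} verifies the hypothesis of the Pillay--Scanlon--Wagner criterion (Fact~\ref{fact:psw-1}): for every finite extension $L$ of $K$ and every prime $p$, the set $\{b^p+ac^p : b,c\in L^*\}$ contains $L^*$. For $p\ne\Chara(K)$ this is Corollary~\ref{cor:power-sum} (finite index of the group of $p$th powers, via genericity of EE sets, combined with the coset-sum Fact~\ref{fact:coset-sum}), and for $p=\Chara(K)$ it is trivial because $L$ is perfect. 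Projectivity is then obtained from this same power-sum input via Fact~\ref{fact:psw} and Serre's criteria in Proposition~\ref{prop:coho}. Your deduction (projective $\Rightarrow$ cohomological dimension $\le 1$ $\Rightarrow$ trivial Brauer groups of algebraic extensions) is the converse half of exactly the Serre equivalences the paper uses, so it is not circular --- Proposition~\ref{prop:coho} does not invoke Theorem~\ref{thm:brauer} as a statement --- but it gains little, since the only proof of Theorem~\ref{thm:projective} on offer already establishes that $\br_p L$ is trivial for all finite $L/K$ and all $p\ne\Chara(K)$ en route. (Also, $G_L$ is open in $G_K$ only for finite $L/K$; for infinite algebraic extensions it is merely closed, though closed subgroups of projective groups are still projective.) Your handling of consequences (1) and (2) matches the paper's, which simply cites the standard dictionary.

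The one genuine defect is the claim that the $\Chara(K)$-primary part of $\br(L)$ is ``handled analogously via the Artin--Schreier sequence.'' It is not: the Artin--Schreier sequence computes $H^{*}(G_L,\Zz/p)$ and gives $H^2(G_L,\Zz/p)=0$ for any field of characteristic $p$, but in characteristic $p$ there is no Kummer-type sequence relating $\mu_p$ to $\Gg_m$ ($\mu_p$ is trivial as a Galois module), so this says nothing about the $p$-torsion of $H^2(G_L,(\kalg)^\times)$. Projectivity alone genuinely fails to control this part: the fixed field of a pro-$p$ Sylow subgroup of the absolute Galois group of $\Ff_p((t))$ has projective absolute Galois group but nontrivial $p$-primary Brauer group (and is, of course, imperfect). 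What saves you is perfectness, by a different mechanism than the one you name: since $K$ is perfect, $K^{\mathrm{sep}}=\kalg$, the $p$th-power map is an automorphism of the $G_L$-module $(\kalg)^\times$, hence multiplication by $p$ is an automorphism of $\br(L)=H^2(G_L,(\kalg)^\times)$, which therefore has no $p$-torsion. (Equivalently, invoke the classical fact that a perfect field of characteristic $p$ has trivial $p$-primary Brauer group.) With that repair, your proof is complete.
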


We recall the definition of Severi-Brauer variety.
Let $\kalg$ be an algebraic closure of $K$.
Given a $K$-variety $V$ we let $V_{\kalg}$ be the base change $V \times_K \Spec \kalg$ of $V$ to a $\kalg$-variety.
A \textbf{Severi-Brauer variety} is a $K$-variety $V$ such $V_{\kalg}$ is isomorphic (over $\kalg$) to $\dim V$-dimensional projective space.
A Severi-Brauer variety is geometrically integral, so $(2)$ is a modest step towards the conjecture that large simple fields are $\mathrm{PAC}$.
Theorem~\ref{thm:brauer} was proven for supersimple fields in \cite{supersimple-field}, our proof closely follows that in \cite{supersimple-field}, so we will not recall the definition of the Brauer group.
(Supersimple fields are perfect, but large simple fields need not be perfect.)
Items $(1)$ and $(2)$ of Theorem~\ref{thm:brauer} are well-known consequences of triviality of the Brauer group.
We refer to \cite[1.5, 4.5.1]{poonen-qpoints} for the definition of the Brauer group and these facts.

\veno
We describe the proof of Theorem~\ref{thm:projective} under the additional assumption of perfection.
It is a theorem of Gruenberg~\cite{gruenberg} that a field of cohomological dimension $\le 1$ has projective absolute Galois group, and if every finite extension of $K$ has trivial Brauer group then $K$ has cohomological dimension $\le 1$ ~\cite[II.3.1, Proposition 5]{serre-coho}.
The class of perfect fields is closed under finite extensions.
If $K$ is simple then any finite extension $L$ of $K$ is simple as $L$ is interpretable in $K$.
Finally, a finite extension of a large field is large by Fact~\ref{fact:alg-ext} below.
So Theorem~\ref{thm:brauer} implies that every finite extension of a perfect large simple field has trivial Brauer group.

\vspace{2mm}
\noindent
Suppose $\Chara(K) \ne 2$, then we say that a \textbf{conic} over $K$ is a smooth irreducible projective $K$-curve of genus $0$.
One-dimensional Severi-Brauer varieties are exactly conics~\cite[4.5.8]{poonen-qpoints}.
Thus Theorem~\ref{thm:conics} generalizes the one-dimensional case of Theorem~\ref{thm:brauer}.2 to imperfect fields.

\begin{theorem} 
\label{thm:conics}
Suppose that $K$ is large and simple, $\Chara(K) \ne 2$, and $C$ is a conic over $K$.
Then $C$ has a $K$-point, hence (by largeness) $C(K)$ is infinite.
\end{theorem}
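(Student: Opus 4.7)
The plan is a Brauer-group reduction to Theorem~\ref{thm:brauer}. If $K$ is perfect then $C$ is a one-dimensional Severi-Brauer variety~\cite[4.5.8]{poonen-qpoints}, so Theorem~\ref{thm:brauer}(2) directly produces a $K$-point and largeness upgrades this to infinitely many. So I may assume $K$ is imperfect, set $p := \Chara(K)$ (odd because $p \ne 2$), and view $C$ as a one-dimensional Severi-Brauer variety corresponding to a class $[C] \in \mathrm{Br}(K)[2]$ that vanishes if and only if $C(K) \ne \emptyset$.

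The main step is to pass to the perfect closure $K^{\mathrm{perf}} = \bigcup_n K^{1/p^n}$. Since $K^{\mathrm{perf}}/K$ is purely inseparable, restriction identifies the absolute Galois group of $K^{\mathrm{perf}}$ with that of $K$, which is projective by Theorem~\ref{thm:projective}. Gruenberg's theorem then gives $\mathrm{cd}(K^{\mathrm{perf}}) \le 1$, and since $K^{\mathrm{perf}}$ is perfect this forces $\mathrm{Br}(K^{\mathrm{perf}}) = 0$ by~\cite[II.3.1, Proposition 5]{serre-coho}. Hence $[C]$ dies in $\mathrm{Br}(K^{\mathrm{perf}})$, and by compatibility of the Brauer group with filtered colimits of fields, it already dies in $\mathrm{Br}(L)$ for some finite purely inseparable subextension $L/K$ of $K^{\mathrm{perf}}/K$. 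Since $[L:K]$ is a power of $p$ and hence coprime to $2$, the standard fact that the kernel of $\mathrm{Br}(K) \to \mathrm{Br}(L)$ is killed by $[L:K]$ forces $[C] = 0$ in $\mathrm{Br}(K)$, producing the desired $K$-point on $C$.

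The main obstacle, in my estimation, is conceptual rather than technical: it is recognizing that the perfection-free Theorem~\ref{thm:projective} supplies exactly the required input. One cannot simply invoke Theorem~\ref{thm:brauer} for $K^{\mathrm{perf}}$, since it is unclear whether simplicity passes to the perfect closure; the argument instead extracts only the Galois-theoretic footprint of simplicity and combines it with the purely algebraic observation that $2$-torsion Brauer classes survive all odd-degree field extensions.
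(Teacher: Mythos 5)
Your argument is correct, but it takes a genuinely different and considerably longer route than the paper's. The paper's proof is two lines: since $\Chara(K) \ne 2$ one may diagonalize $C$ as $ax^2 + by^2 = z^2$ with $a,b \in K^*$, and Corollary~\ref{cor:power-sum} (squares have finite index in $(K^*,\times)$ by Corollary~\ref{cor:poizat-large}, and the sum of two cosets of a finite-index definable subgroup of $(K^*,\times)$ contains $K^*$ by Fact~\ref{fact:coset-sum}) immediately produces $c,d$ with $ac^2 + bd^2 = 1$. Your Brauer-theoretic route is sound: Theorem~\ref{thm:projective} indeed holds without perfection, restriction identifies the absolute Galois group of $K^{\mathrm{perf}}$ with that of $K$, the Brauer group commutes with the filtered colimit defining $K^{\mathrm{perf}}$, and the kernel of restriction to a purely inseparable extension of degree $p^n$ is $p^n$-torsion (the index of a central simple algebra divides the degree of any finite splitting field, and the exponent divides the index), so the $2$-torsion class $[C]$ already vanishes over $K$. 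However, the perfect-closure detour is unnecessary: since $2 \ne \Chara(K)$, the inputs you use already yield $\br_2 K = 0$ over $K$ itself, either from $\mathrm{cd}_2 \le 1$ (Proposition~\ref{prop:coho}, which is the cleaner thing to cite rather than passing through projectivity and back via the converse of Gruenberg's theorem), or, bypassing cohomology entirely, from Fact~\ref{fact:psw} with $p = 2$, whose hypothesis is exactly Corollary~\ref{cor:power-sum} applied to the finite separable extensions of $K$ (which are large and simple). So both proofs rest on the same arithmetic input, Corollary~\ref{cor:power-sum}: the paper feeds it directly into the defining equation of the conic, while you feed it (through Proposition~\ref{prop:coho}) into the vanishing of a Brauer class and then invoke Ch\^atelet's theorem. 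What your approach buys is generality --- essentially the same argument shows that every Severi--Brauer variety over $K$ whose class has order prime to $\Chara(K)$ has a $K$-point, extending Theorem~\ref{thm:brauer}$(2)$ beyond the perfect case --- at the cost of machinery the one-dimensional statement does not need.
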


Let us mention some other earlier work around the conjectures on stable and simple fields described above.
One of the first results on deducing algebraic results from model-theoretic hypotheses was Macintyre's theorem that infinite fields with $\upomega$-stable theory are algebraically closed (\cite{Macintyre-omegastable} and generalized to superstable fields in ~\cite{Cherlin-Shelah-superstable}).  
Macintyre's Galois-theoretic method has been used in many later works including the result on large stable fields ~\cite{JTWY} mentioned above.
Supersimple theories are simple theories in which there are not infinite forking chains of types, whereby any complete type has an ordinal valued dimension called the $SU$-rank.  This gives a so-called ``surgical dimension" as in ~\cite{corps-et-ch} from which one deduces that an infinite field with {\em supersimple} theory is perfect and bounded.  So in so far as Conjecture (2) is restricted to supersimple theories, it remained to prove that supersimple theories are $\mathrm{PAC}$, and some partial results were obtained in ~\cite{supersimple-field} and ~\cite{Amador-elliptic} for example.
A theme of the current paper is that, other than perfection of $K$, any results on supersimple fields also hold over large simple fields.

\meno
The conclusions of Theorems~\ref{thm:projective}, ~\ref{thm:brauer}, and \ref{thm:conics} are properties of $\mathrm{PAC}$ fields.
Another well-known consequence of a field $K$ being PAC is that the Henselization of any non-trivial valuation on $K$ is separably closed, see \cite[Corollary 11.5.9]{field-arithmetic}.
In an earlier draft of this paper we showed that any non-trivial valuation on a large simple field has separably closed Henselization.
Dittmann generalized this to Theorem~\ref{thm:sop}, which is proven in the appendix.

\begin{theorem}
\label{thm:sop}
Suppose that $K$ is large and $\nsopinfty$.
Then any non-trivial valuation on $K$ has separably closed Henselization.
In particular any nontrivial valuation on $K$ has algebraically closed residue field and divisible value group.
\end{theorem}

$\nsopinfty$ is a weakening of simplicity, see the appendix for a definition and some discussion.
$\nsopone$ implies $\nsopinfty$ and essentially every known theory without the strict order property is $\nsopinfty$.
It is natural to ask if Theorem~\ref{thm:sop} holds without the assumption of largeness.
In the appendix we give the following partial generalization.

\begin{theorem}
\label{thm:formally p adic}
If $K$ admits a $p$-valuation then $K$ is $\sopinfty$.
In particular if $K$ is a subfield of a finite extension of $\Qq_p$ then $K$ is $\sopinfty$.
\end{theorem}

See Section~\ref{section:formally real} for the definition of a $p$-valuation.
The proof of Theorem~\ref{thm:formally p adic} uses diophantine work of Anscombe, Dittmann, and Fehm~\cite{AnscombeDittmannFehm-Siegel} in place of largeness.
The results of \cite{AnscombeDittmannFehm-Siegel} are $p$-adic analogues of classical results on sums of squares.
In Section~\ref{section:formally real} we give a similar argument using Lagrange's four square theorem to show that a formally real field is $\sopinfty$.
If $K$ admits a valuation with formally real residue field then $K$ is formally real~\cite[Corollary 10.1.9]{real-algebraic-geometry}.
Thus if $K$ admits a valuation with formally real residue field then $K$ is $\sopinfty$.

\subsection*{Acknowledgements}
The first author was supported by NSF grants DMS-1665035, DMS-1760212, and DMS-2054271.
We thank Daniel Max Hoffmann for finding some mistakes.



\section{Large fields and definability}

\subsection{Algebraic conventions}
We let $K^*$ be the set of non-zero elements of $K$ and $\Chara(K)$ be the characteristic of $K$.
A \textbf{$K$-variety} is a separated, reduced, $K$-scheme of finite type.
We let $\dim V$ be the usual algebraic dimension and $V(K)$ be the set of $K$-points of a $K$-variety $V$.
We let $\Aa^n$ be $n$-dimensional affine space over $K$, recall that $\Aa^n(K) = K^n$.
We will often assume irreducibility of the relevant $K$-varieties. 
A $K$-curve is a one-dimensional $K$-variety.
A \textbf{morphism} is a morphism of $K$-varieties.

\subsection{Largeness}
Large fields were introduced by Florian Pop. A survey appears in ~\cite{Pop-little} which starts by saying that large fields are fields over which (or in which) one can do a lot of ``interesting mathematics".  So largeness looks like a field-arithmetic  tameness notion. 
The field $K$ is \textbf{large} if every irreducible $K$-curve with a smooth (also called nonsingular)  $K$-point has infinitely many $K$-points. 
Fact~\ref{fact:pop-def} is due to Pop~\cite{pop-embedding}.

\begin{fact}
\label{fact:pop-def}
The following are equivalent:
\begin{enumerate}[leftmargin=*]
\item $K$ is large,
\item $K$ is existentially closed in $K((t))$,
\item if an irreducible $K$-variety $V$ has a smooth $K$-point then $V(K)$ is Zariski dense in $V$.
\end{enumerate}
\end{fact}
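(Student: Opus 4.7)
The plan is to establish $(3) \Leftrightarrow (1)$ and $(1) \Leftrightarrow (2)$. The direction $(3) \Rightarrow (1)$ is immediate, since a proper Zariski-closed subset of an irreducible $K$-curve is finite, so a Zariski dense set of $K$-points must be infinite. For $(1) \Rightarrow (3)$, given $V$ irreducible with smooth $K$-point $p$ and non-empty open $U \subseteq V$, I would embed $V$ projectively and intersect with a generic linear subspace of codimension $\dim V - 1$ through $p$. Since $K$ is infinite (a large field cannot be finite), Bertini's theorem allows this subspace to be chosen with $K$-coefficients so that the intersection $C$ is an irreducible $K$-curve through $p$, smooth at $p$, and meeting $U$. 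Then $(1)$ gives $C(K)$ infinite, hence $C(K) \cap U$ non-empty.

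For $(2) \Rightarrow (1)$, let $C$ be an irreducible $K$-curve with smooth $K$-point $p$. The isomorphism $\widehat{\Oo}_{C,p} \cong K[[t]]$ provides a dominant morphism $\Spec K[[t]] \to C$ whose generic fiber is a $K((t))$-point $\eta$; the induced injection $K(C) \hookrightarrow K((t))$ shows that $\eta$ has at least one coordinate in $K((t)) \setminus K$, so $\eta$ is distinct from every $K$-point of $C$. Given finitely many $q_1, \ldots, q_m \in C(K)$, the existential formula asserting the existence of a point of $C$ distinct from each $q_i$ is realized by $\eta$ in $K((t))$; by $(2)$ it is realized in $K$, producing a new $K$-point of $C$. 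Iterating yields $|C(K)| = \infty$. For $(1) \Rightarrow (2)$, I would reduce to finding a $K$-point in $V \setminus W$, where $V \subseteq \Aa^n_K$ is irreducible, $W \subset V$ is proper closed, and $\bar\alpha \in (V \setminus W)(K((t)))$ realizes the given existential formula. Passing to a projective compactification $\bar V$ of $V$, the point $\bar\alpha$ extends by the valuative criterion of properness to a $K[[t]]$-point of $\bar V$, and a careful choice of projective embedding arranges that the specialization at $t = 0$ is a smooth $K$-point of $V$. Applying $(3)$ (which follows from $(1)$), $V(K)$ is then Zariski dense in $V$, yielding a $K$-point in $V \setminus W$.

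The main obstacle is the last step of $(1) \Rightarrow (2)$: producing a smooth $K$-point of $V$ as a specialization of the given $K((t))$-realization $\bar\alpha$. In characteristic zero this can be handled via resolution of singularities, composing $\bar\alpha$ with a desingularization of $\bar V$ so that the specialization lies on a smooth projective model; in positive characteristic one needs more delicate tools—iterated blowups, local uniformization, or de Jong alterations—which is the technical heart of Pop's original argument and the one place where the three conditions are not equivalent by a soft manipulation alone.
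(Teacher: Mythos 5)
The paper offers no proof of this statement: it is quoted as Fact~\ref{fact:pop-def} and attributed directly to Pop's paper, so there is nothing in-text to compare your argument against; I am judging your outline on its own. Three of your four implications are essentially sound. $(3)\Rightarrow(1)$ is immediate as you say. $(2)\Rightarrow(1)$ via $\widehat{\Oo}_{C,p}\cong K[[t]]$ and the induced embedding $K(C)\hookrightarrow K((t))$ is the standard argument and is complete. $(1)\Rightarrow(3)$ by cutting with linear spaces through the smooth point is also the standard reduction, though you should record that an irreducible $K$-variety with a smooth $K$-point is automatically geometrically irreducible (the Galois action fixes the unique geometric component through that point), which is what licenses the Bertini irreducibility statement over the (necessarily infinite) field $K$.

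The genuine gap is $(1)\Rightarrow(2)$, and your proposed repair does not work outside characteristic zero. No ``careful choice of projective embedding'' makes the $t=0$ specialization of the $K[[t]]$-point land at a smooth point of $V$; it can land in the singular locus or in $\bar V\setminus V$. Resolution of singularities does rescue the argument in characteristic $0$ (lift $\bar\alpha$ to the generic point of a resolution $\tilde V\to\bar V$, specialize there, and push the Zariski-dense set $\tilde V(K)$ forward along the proper surjection), but your positive-characteristic fallbacks fail: the valuation induced by $\operatorname{ord}_t$ on $K(V)$ is discrete of rank one with residue field $K$, hence non-Abhyankar once $\dim V\ge 2$, so local uniformization for it is an open problem; and de Jong's alterations are only generically finite, so the $K((t))$-point lifts only after a finite extension of $K((t))$, destroying $K$-rationality of the specialization. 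This step is also not ``the technical heart of Pop's original argument'': the characteristic-free tool is Artin approximation. One replaces the formal witness $\bar\alpha\in K[[t]]^n$ by a witness $\bar\beta$ with coordinates in the Henselization of $K[t]_{(t)}$ (algebraic power series) satisfying the same equations and agreeing with $\bar\alpha$ modulo a high power of $t$, so that the inequations survive. Then $K(t,\bar\beta)$ is the function field of a $K$-curve carrying a smooth $K$-rational point (the center of $\operatorname{ord}_t$), and largeness applied to that curve yields infinitely many $K$-points whose images under $\bar\beta$ give the required $K$-point of $V\setminus W$. Without this (or an equivalent approximation input such as Greenberg's theorem), your proof of $(1)\Rightarrow(2)$ is incomplete in positive characteristic.
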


We also make use of Fact~\ref{fact:alg-ext}, see \cite[Proposition 2.7]{Pop-little}.

\begin{fact}
\label{fact:alg-ext}
An algebraic extension of a large field is large.
\end{fact}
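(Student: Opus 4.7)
The plan is to use Pop's characterization of largeness (Fact~\ref{fact:pop-def}(2)): $K$ is large if and only if $K$ is existentially closed in $K((t))$. The goal is thus to show that $L$ is existentially closed in $L((t))$.

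I would first handle the case when $L/K$ is finite. Fix a $K$-basis $e_1, \ldots, e_n$ of $L$; this identifies $L((t))$ with $L \otimes_K K((t))$, since every Laurent series $\sum_k a_k t^k \in L((t))$ expands uniquely as $\sum_j f_j(t) e_j$ with $f_j \in K((t))$ by expanding each $a_k$ in the basis. Given an existential sentence $\psi = \exists \bar y\, \theta(\bar y, \bar a)$ over $L$ that holds in $L((t))$, I would substitute $y_i \mapsto \sum_j y_{i,j} e_j$ for fresh $K$-variables $y_{i,j}$, and similarly expand each parameter $a_i = \sum_j a_{i,j} e_j$ with $a_{i,j} \in K$. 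Matching coefficients in the basis, each polynomial equation $p(\bar y) = 0$ decomposes into $n$ polynomial equations over $K$, and each inequation $p(\bar y) \neq 0$ decomposes into the disjunction $\bigvee_j q_j \neq 0$, where $p = \sum_j q_j e_j$. This produces an existential sentence $\tilde\psi$ over $K$ with the property that $\tilde\psi$ holds in $K((t))$ iff $\psi$ holds in $L((t))$, and $\tilde\psi$ holds in $K$ iff $\psi$ holds in $L$. Existential closure of $K$ in $K((t))$ then transfers $\tilde\psi$ from $K((t))$ to $K$, giving $\psi$ in $L$.

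For general algebraic $L/K$, I would pass to the curve formulation of largeness. Let $C$ be an irreducible $L$-curve with a smooth $L$-point $p$. The finitely many equations defining $C$ and the finitely many coordinates of $p$ all lie in some finite subextension $L_0/K$; write $C = C_0 \times_{L_0} L$ with $p$ coming from $p_0 \in C_0(L_0)$. The Jacobian criterion shows that smoothness of $p_0$ on $C_0$ is equivalent to smoothness of $p$ on $C$, so $p_0$ is smooth. By the finite case applied to $L_0/K$, the field $L_0$ is large, hence $C_0(L_0)$ is infinite, and $C_0(L_0) \subseteq C(L)$ yields infinitely many $L$-points on $C$.

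I expect the main subtlety to be in the finite case, specifically in handling inseparable extensions uniformly. An alternative approach via Weil restriction --- forming $\reslk(C)$ and applying largeness of $K$ to the irreducible component through the $K$-point coming from $p$ --- works cleanly when $L/K$ is separable, since Weil restriction then preserves smoothness, but is delicate in the inseparable case, where smoothness can be lost. The basis decomposition argument above avoids this difficulty entirely, since it is purely algebraic and makes no use of smoothness or of the geometry of the varieties involved.
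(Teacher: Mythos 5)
Your argument is correct, and it is essentially the standard proof of the fact that the paper does not prove but cites (Pop's survey, Proposition 2.7): for finite $L/K$ one transfers existential closedness in $K((t))$ through the identification $L((t)) \cong L \otimes_K K((t))$ by restricting scalars at the level of existential formulas, and for an arbitrary algebraic extension one spreads out the curve and its smooth point to a finite subextension. The only point worth adding in the second step is that $C_0$ is irreducible (and one-dimensional), which follows because $C = C_0 \times_{L_0} L$ is: a decomposition of $C_0$ into proper closed subsets would base-change to such a decomposition of $C$, and dimension is preserved under base field extension.
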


Fact~\ref{fact:elem-ext} allows us to pass to elementary extensions, see \cite[Proposition 2.1]{Pop-little}.

\begin{fact}
\label{fact:elem-ext}
Large fields form an elementary class.
\end{fact}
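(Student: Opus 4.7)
\veno
The plan is to exhibit an explicit first-order axiom scheme (in the language of rings) whose models are precisely the large fields, thereby establishing largeness as an elementary property. For each degree $d$ and integer $N \ge 1$, I would consider the sentence $\varphi_{d,N}$ asserting: for every polynomial $f(x,y) \in K[x,y]$ of total degree at most $d$, if $f$ is irreducible in $K[x,y]$ and there exists $(a,b) \in K^2$ with $f(a,b)=0$ and $(\partial_x f(a,b), \partial_y f(a,b)) \ne (0,0)$, then $f$ admits at least $N$ distinct zeros in $K^2$. This is first-order: irreducibility of a polynomial of total degree at most $d$ is captured by universal quantification over pairs of polynomials of degree at most $d-1$ whose product could equal $f$; smoothness at a specific point is the Jacobian nonvanishing condition; and having $N$ distinct zeros is purely existential.

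\veno
The forward direction is immediate: any large $K$ satisfies every $\varphi_{d,N}$, since the vanishing locus of an irreducible $f \in K[x,y]$ is an irreducible $K$-curve. For the converse, I would suppose $K \models \varphi_{d,N}$ for all $d, N$, take an arbitrary irreducible $K$-curve $C$ with a smooth $K$-point $p$, shrink $C$ to an affine neighborhood of $p$, use that $\Oo_{C,p}$ is a DVR with some uniformizer $t$, and invoke the primitive element theorem to write $K(C) = K(t)[u]/(f(t,u))$ for some irreducible $f \in K[t,u]$. This yields a birational morphism $\pi : C \dashrightarrow V(f) \subset \Aa^2$. After possibly replacing $u$ by a generic $K$-linear combination with $t$, one arranges that $\pi$ is defined and unramified at $p$ and that $\pi(p)$ is a smooth $K$-point of $V(f)$; here the $K$-rationality of $p$ (and consequent triviality of the residue field extension) is what makes this possible. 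The axioms $\varphi_{\deg f, N}$ then supply infinitely many $K$-points on $V(f)$, and since a birational morphism of curves is an isomorphism on a Zariski-dense open subset, $C$ inherits infinitely many $K$-points.

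\veno
The main obstacle is the geometric reduction in the previous paragraph: producing a plane birational model with its own smooth $K$-point. The $K$-rationality of $p$ together with genericity of the projection are the essential ingredients; given these, the remainder is routine verification. An alternative route via Fact~\ref{fact:pop-def}(3) and a direct axiomatization involving higher-dimensional irreducible varieties (whose irreducibility and dimension can be detected by first-order conditions on bounded-degree generators, via effective Nullstellensatz-type bounds) would work similarly, but the plane-curve formulation keeps the bookkeeping cleanest.
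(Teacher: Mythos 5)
The paper gives no proof of Fact~\ref{fact:elem-ext}, only the citation \cite[Proposition 2.1]{Pop-little}, and your axiomatization by plane-curve sentences $\varphi_{d,N}$ is essentially the standard one used there. The forward direction is fine, and you correctly locate the crux of the converse: given an irreducible curve $C$ with smooth $K$-point $p$, one must produce a plane birational model $V(f)$ on which the image of $p$ is itself a \emph{smooth} $K$-point. (A side remark: you can drop irreducibility from the axioms entirely. If $(a,b)$ is a simple zero of $f=gh$ then it is a zero of exactly one factor, and the gradient of $f$ there is a nonzero scalar multiple of the gradient of that factor; so the unrestricted scheme is equivalent and the quantification over factorizations is unnecessary.)

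The gap is in the device you offer for that crux. Replacing $u$ by $\alpha u+\lambda t$ cannot do what you claim. First, if the primitive element $u$ has a pole at $p$, so does every such combination; the standard fix is to choose $u$ integral over $K[t]$, so that $u$ lies in the integrally closed ring $\Oo_{C,p}$. Second, and more seriously, on the fibre $t=0$ one has $\alpha u+\lambda t=\alpha u$, so this family of modifications cannot separate $p$ from another point $p'$ of $C$ with $t(p')=0$ and $u(p')=u(p)$; such a coincidence is precisely what produces a singularity (e.g.\ a node) of $V(f)$ at $\pi(p)$, after which $\varphi_{d,N}$ gives you nothing. The correct genericity argument takes an affine embedding of a neighbourhood of $p$ in $\Aa^{m}$ whose first coordinate is $t$, and uses as second projection coordinate a generic $K$-linear combination of \emph{all} remaining ambient coordinates: for all but finitely many choices (available since the axioms force $K$ infinite) this separates the finitely many points of $t^{-1}(0)$, is still a primitive element because $t$ is a separating transcendence element (being a uniformizer at a smooth point --- this is also what justifies your appeal to the primitive element theorem in positive characteristic), and the map remains immersive at $p$ since $dt$ already generates the cotangent space there. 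With that repair, plus the routine verification that a birational, unramified, set-theoretically injective situation over $\pi(p)$ forces $\Oo_{V(f),\pi(p)}=\Oo_{C,p}$, your argument goes through.
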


\subsection{Existentially \'etale sets}
Let $W$ be a $K$-variety.
The authors of \cite{JTWY} introduced the \textbf{\'etale open topology} on $W(K)$.
If $K$ is not large then the \'etale open topology is always discrete and if $K$ is large then the \'etale open topology on $W(K)$ is non-discrete whenever $W(K)$ is infinite.
Our original proofs were given in terms of this topology, but at present we will mostly avoid the topology and give proofs from scratch.
We will use properties of certain special existentially definable subsets of $W(K)$.
A subset $X$ of $W(K)$ is an \textbf{EE} set if there is a $K$-variety $V$ and an \'etale morphism $f :V \to W$ such that $X = f(V(K))$.
It is shown in \cite{JTWY} that the EE subsets of $W(K)$ form a basis for the \'etale open topology.
(In \cite{JTWY} EE sets are referred to as ``\'etale images".)

\veno
If $W$ is smooth and $V \to W$ is an \'etale morphism then $V$ is also smooth.
At present we are mainly concerned with subsets of $K^n = \Aa^n(K)$, so we may restrict attention to smooth $K$-varieties.
We quickly recall what we need from this setting.
Let $V,W$ be smooth irreducible $K$-varieties. An \textbf{\'etale morphism} $f:V\to W$ is a morphism such that the differential $df_{a}$ is an isomorphism $TV_a \to TW_{f(a)}$ for all $a \in V$.
In particular if $f \colon \Aa^n \to \Aa^n$ is a morphism then $f$ is \'etale at $a \in K^n$ if and only if the Jacobian of $f$ at $a$ is invertible.
The general notion of an \'etale morphism between not necessarily smooth varieties is more complicated but will not be needed here.
Fact~\ref{fibre-product} is proven in \cite[Proposition 17.1.3]{EGA-IV-4}.

\begin{fact}
\label{fibre-product}
Suppose $W_{1}, W_{2}, V$ are smooth $K$-varieties and $f_{i}:W_{i}\to V$ is an \'etale morphism for $i \in \{1,2\}$.
Let $W$ be the fibre product $W_1 \times_{V} W_{2}$ and  $f:W\to V$ be the canonical map. 
Then $W$ is a smooth $K$-variety and $f$ is \'etale. 
\end{fact}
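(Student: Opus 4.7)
The plan is to deduce both parts from two standard stability properties of \'etale morphisms: stability under base change, and stability under composition. Let $\pi_1 : W \to W_1$ be the projection coming from the fibre-product structure. By construction $\pi_1$ is the base change of $f_2 : W_2 \to V$ along $f_1 : W_1 \to V$, and since \'etaleness is preserved by arbitrary base change, $\pi_1$ is \'etale. The canonical map $f : W \to V$ then equals the composition $f_1 \circ \pi_1$; as a composition of \'etale morphisms is \'etale, $f$ itself is \'etale. Smoothness of $W$ follows because an \'etale morphism is smooth of relative dimension zero, so the composition $W \to W_1 \to \Spec K$ of $\pi_1$ with the smooth structure morphism of $W_1$ is smooth.

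In the hands-on framework of the paper, where smooth varieties and \'etale morphisms are characterized via invertibility of the differential, one can verify the same facts directly using the Jacobian criterion. At a point $w = (w_1, w_2) \in W$ with image $v = f_1(w_1) = f_2(w_2) \in V$, the tangent space $T_w W$ is identified with the linear fibre product
\[
\{(u_1, u_2) \in T_{w_1} W_1 \times T_{w_2} W_2 : df_{1,w_1}(u_1) = df_{2,w_2}(u_2)\},
\]
and $df_w : T_w W \to T_v V$ sends $(u_1, u_2)$ to the common value $df_{1,w_1}(u_1) = df_{2,w_2}(u_2)$. Since $df_{1,w_1}$ and $df_{2,w_2}$ are linear isomorphisms, a short calculation shows that $df_w$ is also a linear isomorphism; this confirms \'etaleness of $f$ at $w$, and a dimension count then gives smoothness of $W$ at $w$ of the expected dimension $\dim W_1 + \dim W_2 - \dim V$.

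The main obstacle, if one insists on a self-contained argument rather than quoting stability of \'etale maps under base change from EGA, is verifying the tangent-space identification $T_w W = T_{w_1} W_1 \times_{T_v V} T_{w_2} W_2$ for the scheme-theoretic fibre product of smooth varieties. This is a standard computation with dual numbers, following from the universal property of the fibre product applied to $\Spec K[\epsilon]/(\epsilon^2)$-valued points; once it is granted, the remainder is purely linear algebra and bookkeeping.
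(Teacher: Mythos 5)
Your argument is correct, but note that the paper does not actually prove this Fact: it simply cites EGA IV, Proposition 17.1.3, so there is no internal proof to compare against. What you have written is essentially a reconstruction of the standard argument that the citation outsources: $\pi_1 : W \to W_1$ is the base change of the \'etale morphism $f_2$, hence \'etale; $f = f_1 \circ \pi_1$ is then \'etale as a composition; and $W$ is smooth because it is \'etale (hence smooth) over the smooth variety $W_1$. Your second, hands-on version is also sound and fits the paper's elementary framework, where \'etaleness is defined via invertibility of differentials; the tangent-space identification $T_w W \cong T_{w_1}W_1 \times_{T_v V} T_{w_2}W_2$ via dual numbers is exactly the point that needs checking, and you flag it honestly. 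One small caveat about the order of the argument in that version: the paper only \emph{defines} \'etale morphisms between smooth varieties via the differential criterion, so you must first establish that $W$ is smooth (e.g.\ via the flat quasi-finite map $\pi_1$ giving $\dim_w W = \dim_{w_1} W_1$, matched against $\dim T_w W = \dim T_{w_1} W_1$ from your linear-algebra computation, which also guarantees $W$ is reduced and hence a $K$-variety in the paper's sense) before the statement ``$f$ is \'etale'' is even well-posed in the paper's terminology. With that ordering made explicit, your proof is complete and, unlike the paper, self-contained modulo the dual-numbers computation.
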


We have $(W_1 \times_V W_2)(K) = \{ (a_1,a_2) \in W_1(K) \times W_2(K) : f_1(a_1) = f_2(a_2) \}$.
It easily follows that the image of $(W_1 \times_V W_2) (K)$ under $f$ agrees with $f_1(W_1(K)) \cap f_2(W_2(K))$.
Corollary~\ref{cor:intersection} follows.

\begin{corollary}
\label{cor:intersection}
Suppose that $W$ is a smooth $K$-variety.
Then the collection of EE subsets of $W(K)$ is closed under finite intersections.
\end{corollary}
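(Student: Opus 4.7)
The plan is to deduce the corollary directly from Fact~\ref{fibre-product} and the explicit description of $K$-points of a fibre product given in the sentence immediately preceding the corollary. Induction on the number of sets reduces the problem to binary intersections, so I would fix two EE subsets $X_1, X_2 \subseteq W(K)$ and produce a single \'etale morphism from a smooth $K$-variety whose image is $X_1 \cap X_2$.

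First I would unfold the definition: pick $K$-varieties $V_1, V_2$ and \'etale morphisms $f_i : V_i \to W$ with $X_i = f_i(V_i(K))$ for $i = 1,2$. Since $W$ is assumed smooth and an \'etale morphism from a variety to a smooth variety has a smooth source (as noted in the paragraph before Fact~\ref{fibre-product}), both $V_1$ and $V_2$ are smooth. This puts us exactly in the hypotheses of Fact~\ref{fibre-product}.

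Next I would form the fibre product $V := V_1 \times_W V_2$ with canonical map $f : V \to W$. Fact~\ref{fibre-product} gives that $V$ is a smooth $K$-variety and that $f$ is \'etale, so $f(V(K))$ is by definition an EE subset of $W(K)$. It only remains to identify this image with $X_1 \cap X_2$. Using the functor-of-points description
\[
V(K) \;=\; \{ (a_1,a_2) \in V_1(K) \times V_2(K) : f_1(a_1) = f_2(a_2) \},
\]
one immediately reads off
\[
f(V(K)) \;=\; f_1(V_1(K)) \cap f_2(V_2(K)) \;=\; X_1 \cap X_2,
\]
and the corollary follows.

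There is no real obstacle: the only nontrivial input is Fact~\ref{fibre-product}, which is cited. The single point that needs a moment's care is why $V_1$ and $V_2$ may be taken smooth so as to apply that fact, but this is guaranteed by smoothness of $W$ together with preservation of smoothness under \'etale morphisms, as already observed in the text.
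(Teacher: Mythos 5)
Your proof is correct and is exactly the argument the paper intends: form the fibre product $V_1 \times_W V_2$, invoke Fact~\ref{fibre-product}, and identify the image of its $K$-points with $X_1 \cap X_2$ via the functor-of-points description. Your added observation that $V_1, V_2$ are smooth (so that Fact~\ref{fibre-product} applies) is a point the paper leaves implicit, and is correctly justified.
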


Corollary~\ref{cor:intersection} holds for an arbitrary $K$-variety, but we will not need this.

\begin{lemma}
\label{Zariski-dense}
Suppose that $K$ is large, $W$ is a smooth irreducible $K$-variety, and $X$ is a nonempty EE subset of $W(K)$.
Then $X$ is Zariski-dense in $W$.
In particular any nonempty EE subset of $K^n$ is Zariski dense in $K^n$.
\end{lemma}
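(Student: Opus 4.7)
The plan is to reduce to an application of Fact~\ref{fact:pop-def}(3) on a single smooth irreducible component of the source variety. Write $X = f(V(K))$ for some étale morphism $f\colon V \to W$ from a $K$-variety $V$; since $W$ is smooth and $f$ is étale, $V$ is smooth as well, so its finitely many irreducible components are pairwise disjoint open subvarieties of $V$.

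I would first pick $a \in V(K)$ with $f(a) \in X$ and let $V_0$ be the irreducible component of $V$ containing $a$. Then $V_0$ is smooth, irreducible, and has $a$ as a smooth $K$-point, so Fact~\ref{fact:pop-def}(3) applied to $V_0$ yields that $V_0(K)$ is Zariski dense in $V_0$. Because étale morphisms are flat and locally of finite type, they are open; hence $U := f(V_0)$ is a nonempty open subvariety of the irreducible variety $W$, and therefore Zariski dense in $W$.

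The remaining step is to deduce that $f(V_0(K))$ is Zariski dense in $U$, and hence in $W$. This is immediate from the surjectivity of $f|_{V_0}\colon V_0 \to U$: the preimage under $f|_{V_0}$ of any proper Zariski closed subset of $U$ is a proper Zariski closed subset of $V_0$, and so cannot contain the dense set $V_0(K)$. Since $f(V_0(K)) \subseteq f(V(K)) = X$, we conclude that $X$ is Zariski dense in $W$. Applying the general case to $W = \Aa^n$ gives the final assertion.

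I do not anticipate any serious obstacle; the only mildly delicate ingredient is the passage from the possibly reducible $V$ to an irreducible component $V_0$, made possible precisely because smoothness of $V$ forces its irreducible components to be pairwise disjoint, so that $V_0$ is itself smooth and inherits $a$ as a smooth $K$-point. Everything else is a direct application of largeness together with the openness of étale morphisms.
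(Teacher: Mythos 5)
Your proof is correct. It rests on the same key ingredient as the paper's --- applying Fact~\ref{fact:pop-def}(3) to the smooth source variety $V$ --- but runs the argument in the opposite direction. The paper argues by contradiction: if $X$ were contained in a proper closed $Y \subsetneq W$, then $f^{-1}(Y)$ would be a proper closed subvariety of $V$ containing all of $V(K)$ (properness coming from the fact that \'etale morphisms are finite-to-one, so $\dim f^{-1}(Y) = \dim Y < \dim W = \dim V$), contradicting density of $V(K)$ in $V$. You instead push density forward: you restrict to the irreducible component $V_0$ through a $K$-point, get density of $V_0(K)$ in $V_0$ from largeness, and transport it to $W$ using the openness of \'etale morphisms. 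Your version makes explicit the reduction to an irreducible component (which the paper leaves implicit when it invokes Fact~\ref{fact:pop-def} for the possibly reducible $V$), at the cost of invoking openness of \'etale maps where the paper only needs the more elementary finite-to-one property and a dimension count. Both are sound; the difference is cosmetic rather than structural.
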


The identity morphism $W \to W$ is \'etale, so Lemma~\ref{Zariski-dense} generalizes the fact that if $K$ is large and $W$ is a smooth irreducible $K$-variety with $W(K) \ne \emptyset$ then $W(K)$ is Zariski dense in $W$.

\begin{proof}
Let $V$ be a $K$-variety and $f \colon V \to W$ be an \'etale morphism such that $X = f(V(K))$.
Suppose that $X$ is not Zariski dense in $V$.
Then $X$ is contained in a proper closed subvariety $Y$ of $W$.
As $W$ is irreducible we have $\dim Y < \dim W$.
Note that $f^{-1}(Y)$ is a closed subvariety of $V$ containing  $V(K)$.
As $f$ is \'etale it is finite-to-one, hence $\dim V = \dim W$ and $\dim f^{-1}(Y) = \dim Y < \dim W$.
So $f^{-1}(Y)$ is a proper closed subvariety of $V$ containing $V(K)$.
This contradicts Fact~\ref{fact:pop-def}.
\end{proof}

Corollary~\ref{corollary:intersection} follows from Corollary~\ref{cor:intersection} and Lemma~\ref{Zariski-dense}.

\begin{corollary} \label{corollary:intersection} 
Suppose that $K$ is large.
Let $W$ be a smooth irreducible $K$-variety and $X_1,\ldots,X_n$ be EE subsets of $W(K)$ with $\bigcap_{i = 1}^{k} X_i \ne \emptyset$.
Then $\bigcap_{i = 1}^{k} X_i$ is Zariski dense in $W$.
In particular if $X_1,\ldots,X_k$ are EE subsets of $K^n$ with $\bigcap_{i = 1}^{k} X_i \ne \emptyset$ then $\bigcap_{i = 1}^{k} X_i$ is Zariski dense in $K^n$.
\end{corollary}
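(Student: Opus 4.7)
The plan is entirely routine: the statement is explicitly advertised as a consequence of Corollary~\ref{cor:intersection} and Lemma~\ref{Zariski-dense}, so the proof is a two-line combination of these.

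First I would observe that by induction on $k$, iterating Corollary~\ref{cor:intersection} (closure of the class of EE subsets of $W(K)$ under pairwise intersection), the intersection $\bigcap_{i=1}^{k} X_i$ is itself an EE subset of $W(K)$. The base case $k = 1$ is trivial and the inductive step applies Corollary~\ref{cor:intersection} to the EE set $\bigcap_{i=1}^{k-1} X_i$ and $X_k$.

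Next, since by hypothesis $\bigcap_{i=1}^{k} X_i$ is nonempty, Lemma~\ref{Zariski-dense} applies directly to this EE subset of $W(K)$ (using largeness of $K$ and smoothness and irreducibility of $W$) and yields that it is Zariski dense in $W$. For the ``in particular'' clause, take $W = \Aa^n$, which is smooth and irreducible with $W(K) = K^n$, and apply the first part.

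There is no real obstacle: the only minor point is the inductive step, which requires that Corollary~\ref{cor:intersection} applies to $\bigcap_{i=1}^{k-1} X_i$, and this is fine because that intersection is an EE subset of $W(K)$ by the inductive hypothesis. Everything else is immediate.
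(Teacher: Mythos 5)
Your proof is correct and is exactly the argument the paper intends: Corollary~\ref{cor:intersection} gives that the finite intersection is an EE subset of $W(K)$, and Lemma~\ref{Zariski-dense} then yields Zariski density since it is nonempty. The paper gives no further detail, so your write-up matches its approach precisely.
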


Fact~\ref{remark:translation} is proven in \cite{JTWY} for arbitrary $K$-varieties.

\begin{fact}
\label{remark:translation} 
Let $W$ be a smooth $K$-variety, $g \colon W \to W$ be a $K$-variety isomorphism, and $X$ be an EE subset of $W(K)$.
Then $g(X)$ is also an EE subset of $W(K)$. 
\end{fact}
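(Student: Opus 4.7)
The plan is straightforward: unpack the definition of an EE set and then compose. Since $X \subseteq W(K)$ is EE, there exist a $K$-variety $V$ and an \'etale morphism $f : V \to W$ with $X = f(V(K))$. Because $W$ is smooth and $f$ is \'etale, $V$ is smooth as well, so we remain within the smooth setting isolated earlier in the paper where the differential criterion for \'etaleness is available.

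Next I would observe that the $K$-variety isomorphism $g : W \to W$ is itself \'etale. Indeed, as an isomorphism of smooth $K$-varieties $g$ induces a linear isomorphism $dg_a : TW_a \to TW_{g(a)}$ on tangent spaces at each point $a$, which is exactly the differential criterion recalled in the excerpt. Since \'etale morphisms are closed under composition (by the chain rule applied to the differentials, composition of isomorphisms being an isomorphism), the composite $h := g \circ f : V \to W$ is an \'etale morphism from a smooth $K$-variety to $W$.

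Finally I would compute directly on $K$-points: by functoriality of the $K$-points functor, $h(V(K)) = g(f(V(K))) = g(X)$. So $g(X)$ is exhibited as the image of $V(K)$ under an \'etale morphism $h : V \to W$, which is exactly what it means for $g(X)$ to be an EE subset of $W(K)$.

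There is no substantive obstacle here; the statement is really a formal consequence of the definition of EE together with two structural properties of the class of \'etale morphisms, namely that isomorphisms are \'etale and that \'etale morphisms compose. If one wished to extend the proof to arbitrary, not necessarily smooth, $K$-varieties as in \cite{JTWY}, the same strategy would work once the general scheme-theoretic definition of \'etaleness is adopted, since both of these structural properties hold in that generality.
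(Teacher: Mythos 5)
Your proof is correct and follows exactly the same route as the paper's: write $X = f(V(K))$ for an \'etale $f : V \to W$, note that the isomorphism $g$ is \'etale, and conclude that $g \circ f$ is \'etale with $(g \circ f)(V(K)) = g(X)$. The extra remarks on the differential criterion and on the non-smooth case are fine but not needed.
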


\begin{proof}
Let $V$ be a smooth $K$-variety and $f \colon V \to W$ be an \'etale morphism such that $X = f(V(K))$.
Note that $g$ is \'etale as any $K$-variety isomorphism is \'etale.
So $g \circ f \colon V \to W$ is \'etale as a composition of \'etale morphisms is \'etale.
\end{proof}

We will apply Corollary~\ref{cor:translate} below.

\begin{corollary}
\label{cor:translate}
Suppose that $X$ is an EE
subset of $K^n$, $a = (a_1,\ldots,a_n) \in K^n$, and $b = (b_1,\ldots,b_n) \in (K^*)^n$.
Then
$$ X + a = \{ (c_1 + a_1,\ldots,c_n + a_n) : (c_1,\ldots,c_n) \in X \} $$
and
$$ bX = \{ (b_1 c_1,\ldots,b_n c_n) : (c_1, \ldots, c_n) \in X \}$$
are EE subsets of $K^n$.
\end{corollary}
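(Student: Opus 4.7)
The plan is to reduce this to a direct application of Fact~\ref{remark:translation}, taking $W = \Aa^n$, which is a smooth $K$-variety with $W(K) = K^n$. The point is that both operations, translation by $a$ and coordinatewise scaling by $b \in (K^*)^n$, are induced by $K$-variety automorphisms of $\Aa^n$.

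First, I would define $g_a : \Aa^n \to \Aa^n$ by $(x_1,\ldots,x_n) \mapsto (x_1 + a_1, \ldots, x_n + a_n)$. This is a $K$-morphism with two-sided inverse $g_{-a}$, hence a $K$-variety isomorphism, and on $K$-points it induces exactly the map $X \mapsto X + a$. Second, I would define $h_b : \Aa^n \to \Aa^n$ by $(x_1,\ldots,x_n) \mapsto (b_1 x_1, \ldots, b_n x_n)$. Since each $b_i$ lies in $K^*$, the map $h_{b^{-1}}$ with $b^{-1} = (b_1^{-1},\ldots,b_n^{-1})$ is a two-sided inverse to $h_b$, so $h_b$ is likewise a $K$-variety isomorphism, and it induces $X \mapsto bX$ on $K$-points.

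Applying Fact~\ref{remark:translation} to $W = \Aa^n$ with $g = g_a$ shows that $X + a = g_a(X)$ is an EE subset of $K^n$, and applying it with $g = h_b$ shows that $bX = h_b(X)$ is an EE subset of $K^n$. There is no real obstacle here: once the two maps are recognized as $K$-variety isomorphisms of $\Aa^n$, the conclusion is immediate from Fact~\ref{remark:translation}. The only place where one must be careful is to note that scaling fails to be invertible if some coordinate $b_i$ is zero, which is exactly why the hypothesis $b \in (K^*)^n$ is imposed.
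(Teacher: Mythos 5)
Your proposal is correct and is essentially identical to the paper's own proof: both recognize translation by $a$ and coordinatewise scaling by $b$ as $K$-variety automorphisms of $\Aa^n$ and then invoke Fact~\ref{remark:translation}. The extra remarks about explicit inverses and the necessity of $b \in (K^*)^n$ are fine but add nothing beyond the paper's argument.
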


\begin{proof}
The morphisms $\Aa^n \to \Aa^n$ given by $(x_1,\ldots,x_n) \mapsto (x_1 + a_1,\ldots,x_n + a_n)$ and also by $(x_1,\ldots,x_n) \mapsto (b_1 x_1,\ldots,b_n x_n)$ are $K$-variety isomorphisms.
Apply Fact~\ref{remark:translation}.
\end{proof}

\section{Fields with simple theory}
We recall some basic results about fields $K$ whose first order theory is simple, and then make an additional observation under the assumption of largeness.
For simple theories see ~\cite{Kim-Pillay} and ~\cite{Casanovas}, and for groups definable in (models of) simple theories, see   in addition ~\cite{Pillay-definability-simple} and ~\cite{supersimple-field}.
We recall the relevant portions of this theory.

\subsection{Conventions and basic definitions}
Our model theoretic notation is standard. 
We let $L$ be a first order language, $T$ be a complete consistent $L$-theory, and $\overline{M}$ be a highly saturated model of $T$.
For now, $x, y, z, \ldots $ range over finite tuples of variables, $a,b,c,\ldots$ range over finite tuples of parameters from $\overline{M}$, and $A,B,C,\ldots$ range over small subsets of $\overline{M}$.
``Definable" means ``definable in $\overline{M}$, possibly with parameters".
We will sometimes identify definable sets with the formulas defining them. 

\veno
Given an $L$-formula $\phi(x,y)$ and a suitable tuple $b$ we say that $\phi(x,b)$ \textbf{divides over} a set $A$ of parameters if $\{\phi(x,b_{i}):i< \upomega\}$ is inconsistent for some infinite $A$-indiscernible sequence $(b_{i}:i<\upomega)$ with $b_{0} = b$.
A partial type $\Sigma(x)$ divides over $A$ if some formula in $\Sigma$ divides over $A$. 
The theory $T$ is
\textbf{simple} if for any small set $A$ of parameters and complete type $\Sigma(x)$ there is $A_0 \subseteq A$ such that $|A_0| \le |T|$ and $\Sigma(x)$  \textbf{does not divide} over $A_{0}$.
Simplicity may also be defined in terms of the combinatorial \textbf{tree property}, but we will not need this.
It is worth mentioning that simplicity is incompatible with the existence of a definable partial ordering which contains an infinite chain.
It follows that real closed fields and non-separably closed Henselian fields are not simple.
Non-dividing yields a good notion of independence in simple theories: $a$ is \textbf{independent} from $B$ over $A$ if $\tp(a/B,A)$ does not divide over $A$.

\subsection{Generics in definable groups}
In this section we summarize  \cite[Section 3]{Pillay-definability-simple}, although we introduce things in a different order and use somewhat different terminology.
Suppose that $T$ is simple and $G$ is an infinite group definable over $\emptyset$ in $\overline{M}$.
A definable subset $X$ of $G$  is \textbf{(left) $f$-generic} if every left translate $gX$ of $X$ does not divide over $\emptyset$ and a complete type $\Sigma(x)$ concentrated on $G$ is (left) $f$-generic if every formula in $\Sigma(x)$ is left $f$-generic.
Note that if a definable $X \subseteq G$ is $f$-generic then $aX$ is $f$-generic for any $a \in G$.
Note that in \cite{Pillay-definability-simple} ``generic" is used for ``$f$-generic".
(The language was changed after some more recent work on groups definable in NIP theories.)

\begin{fact}
\label{fact:generic}
Suppose that $T$ is simple, $G$ is an $\emptyset$-definable group in $\overline{M}$, $A,B$ are small sets of parameters, and $a \in G$.
Then
\begin{enumerate}
\item Left $f$-genericity is equivalent to right $f$-genericity, so we just say $f$-generic.
\item If $X \subseteq G$ is $f$-generic then $X$ is $f$-generic in any expansion of $\overline{M}$ by constants.
\item $\tp(a/A)$ is left $f$-generic if whenever $b\in G$ is independent from $a$ over $A$ then the product $ba$ is independent of $A\cup\{b\}$ over $\emptyset$.
 \item if $A \subseteq B$ and $a$ is independent from $B$ over $A$, then $\tp(a/B)$ is $f$-generic if and only if $\tp(a/A)$ is $f$-generic.
\item if $b \in B$ then $\tp(a/A,b)$ is $f$-generic if and only if $\tp(ba/A,b)$ is $f$-generic.
\item an $A$-definable subset $X$ of $G$ is $f$-generic if and only if it is contained in an $f$-generic complete type over $A$.
\end{enumerate}
\end{fact}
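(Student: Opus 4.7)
The facts collected here are standard results about f-generic types in groups definable in simple theories, due to Pillay~\cite{Pillay-definability-simple}. My plan is to derive each item from the basic properties of non-forking in simple theories (symmetry, transitivity, extension, local character, and automorphism-invariance) together with the interaction with the group multiplication. I would address them not in the listed order, but in the order (2), (4), (3), (5), (1), which reflects their logical dependence.

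I would take (2) as the entry point, since it essentially unfolds the definition. Suppose $\tp(a/A)$ is left f-generic and $b \in G$ is independent from $a$ over $A$. For any formula $\phi(x,c) \in \tp(a/A)$, the translate $\phi(b^{-1}x,c)$, which belongs to $\tp(ba/A,b)$, does not divide over $\emptyset$ by hypothesis; varying $\phi$ and applying the characterization that a complete type does not divide over $\emptyset$ exactly when each of its formulas does not, this gives that $ba$ is independent from $A \cup \{b\}$ over $\emptyset$. The converse uses automorphism-invariance: given any $g \in G$, use the extension axiom to realize $\tp(g/A)$ by some $b$ that is also independent from $a$ over $A$, conclude via the hypothesis that $\phi(b^{-1}x,c)$ does not divide over $\emptyset$, and then transport along an $A$-automorphism sending $b$ to $g$.

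With (2) in hand, (4) is nearly immediate: since $b \in B$ lies in the parameter set, left translation by $b$ is a definable bijection of $G$ over $A \cup \{b\}$ which intertwines the left translates appearing in the f-genericity condition for $\tp(a/A,b)$ and $\tp(ba/A,b)$. For (3), I would combine (2) with transitivity and monotonicity of non-dividing: if $a$ is independent from $B$ over $A$ and $b$ is independent from $a$ over $B$, then $b$ is independent from $a$ over $A$, so one direction gives $ba$ independent from $A\cup\{b\}$ over $\emptyset$, upgraded to independence from $B\cup\{b\}$ via transitivity using $a$ independent from $B$ over $A$. The reverse direction is symmetric.

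For (5), given an $A$-definable f-generic set $X$, specializing $g$ to the identity shows $X$ itself does not divide over $\emptyset$; by extension I can build a complete type $p$ over $A$ containing $X$ and not dividing over $\emptyset$, and then verify f-genericity of $p$ using the characterization (2) together with the fact that any $b \in G$ independent from the realization of $p$ over $A$ admits an $A$-automorphism adjustment placing it in a generic position. Finally (1), the left/right symmetry, is the subtlest point: I would exploit that inversion $x \mapsto x^{-1}$ is a definable bijection of $G$ over $\emptyset$ satisfying $(gX)^{-1} = X^{-1}g^{-1}$, so left f-genericity of $X$ translates to right f-genericity of $X^{-1}$, and then use symmetry of non-forking together with the characterization (2) applied on both sides to conclude that $X$ is left f-generic iff $X$ is right f-generic. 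I expect the main obstacle to be (1), as extracting genuine left/right symmetry from the asymmetric definition requires that one carefully combines the inversion trick with the symmetry axiom of non-dividing; everything else is mechanical manipulation of the definitions.
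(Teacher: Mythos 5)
First, note that the paper offers no proof of this statement at all: it is quoted as a known result summarizing \cite[Section 3]{Pillay-definability-simple}, so you are reconstructing a nontrivial piece of the theory of generics in simple theories rather than matching an argument in the text. Your reconstruction has a genuine gap at its foundation, namely the forward direction of (2). You argue that if $\tp(a/A)$ is left $f$-generic and $b$ is independent from $a$ over $A$, then each translate $\phi(b^{-1}x,c)$ of a formula $\phi(x,c)\in\tp(a/A)$ does not divide over $\emptyset$, and you conclude that $ba$ is independent from $A\cup\{b\}$ over $\emptyset$. But the formulas $\phi(b^{-1}x,c)$ with $\phi(x,c)\in\tp(a/A)$ form only the partial type $b\cdot\tp(a/A)$, a proper subset of the complete type $\tp(ba/A,b)$; the latter contains formulas in which $b$ occurs in arbitrary ways, not merely as a translation parameter, and a partial type that does not divide over $\emptyset$ can well have completions that do. Tellingly, your argument for this direction never uses the hypothesis that $b$ is independent from $a$ over $A$, yet the conclusion is false without it. The standard route is the reverse of yours: one first proves the hard half of (3) (an $f$-generic type remains $f$-generic under nonforking extension --- this is where symmetry and transitivity of forking do real work), combines it with the easy (4), and only then deduces the forward direction of (2) by observing that every formula of $\tp(ba/A,b)$ is itself $f$-generic and hence nondividing over $\emptyset$. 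Your converse direction of (2), and your derivations of (3) and (4) once (2) is available, are fine.

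Two further soft spots. In (5) you propose to take, by the extension property, a completion $p \supseteq X$ over $A$ that does not divide over $\emptyset$ and then ``verify'' that it is $f$-generic; but nondividing over $\emptyset$ is far weaker than $f$-genericity, and an arbitrary such completion need not be $f$-generic. The actual proof runs by compactness: if no completion of $X$ over $A$ were $f$-generic, then $X$ would be covered by finitely many non-$f$-generic $A$-definable sets, and one needs the separate lemma that non-$f$-generic definable sets form an ideal (if $X\cup Y$ is $f$-generic then so is $X$ or $Y$), which requires its own argument. Finally, for (1) the inversion trick only shows that $X$ is left $f$-generic if and only if $X^{-1}$ is right $f$-generic; bridging from $X^{-1}$ back to $X$ is precisely the content of the lemma that $\tp(a/A)$ is left $f$-generic if and only if $\tp(a^{-1}/A)$ is, and your sketch does not supply that step. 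None of these gaps is fatal to the enterprise --- all the missing pieces are in Pillay's paper --- but as written the proposal does not constitute a proof.
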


Fact~\ref{fact:intersect} is immediate from the definitions and Fact~\ref{fact:generic}.

\begin{fact}
\label{fact:intersect}
If $X$ is a definable subset of $G$ which is not $f$-generic then we have $\bigcap_{i = 1}^{k} g_iX = \emptyset$ for some $g_1,\ldots,g_k \in G$.
\end{fact}

\begin{lemma}
\label{lem:index-generic}
Suppose that $T$ is simple, $M$ is a model of $T$, $G$ is an $\emptyset$-definable group in $M$, $H$ is a subgroup of $G$ with $|G/H| \ge \aleph_0$, and $X$ is a definable subset of $G$ such that $X \subseteq aH$ for some $a \in G$.
Then $X$ is not $f$-generic.
In particular an infinite index definable subgroup of $G$ is not $f$-generic.
\end{lemma}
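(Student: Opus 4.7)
The plan is to use the infinite index of $H$ in $G$ to produce countably many pairwise disjoint left translates of $X$, and then, via the standard Ramsey/compactness extraction of indiscernibles in a sufficiently saturated elementary extension $\overline{M} \succeq M$, to convert these into an $\emptyset$-indiscernible sequence witnessing that some left translate of $X$ divides over $\emptyset$.

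First I would observe that $[G : aHa^{-1}] = [G : H] \ge \aleph_0$ (conjugation preserves index) and pick $g_0, g_1, \ldots \in G$ lying in pairwise distinct cosets of $aHa^{-1}$. From $X \subseteq aH$ one obtains $X X^{-1} \subseteq aHH^{-1}a^{-1} = aHa^{-1}$, so $g_j^{-1} g_i \notin X X^{-1}$ for $i \ne j$, whence $g_i X \cap g_j X = \emptyset$. Pairwise disjointness of finitely many definable sets is a first-order property and so persists in $\overline{M}$. Writing $X = \phi(\overline{M}, c)$ and $\psi(x; y, z) := \phi(y^{-1} x, z)$, the family $\{\psi(x; g_i, c) : i < \omega\}$ is then pairwise inconsistent in $\overline{M}$.

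After a preliminary pigeonhole passing to an infinite subsequence along which the tuples $(g_i, c)$ all realize the same complete $\emptyset$-type, Ramsey plus compactness produces an $\emptyset$-indiscernible sequence $\bigl((g'_i, c'_i)\bigr)_{i < \omega}$ in $\overline{M}$ whose EM-type over $\emptyset$ contains every $\emptyset$-formula that holds on every increasing tuple of the original sequence. In particular: the formula $z_1 = z_2$ forces $c'_i = c'_0 =: c^*$ for all $i$; the pairwise-inconsistency formula on $\psi$ is inherited; and $(g'_0, c^*) \equiv_\emptyset (g_0, c)$. An $\emptyset$-automorphism of $\overline{M}$ sending $(g'_0, c^*) \mapsto (g_0, c)$ then yields an $\emptyset$-indiscernible sequence starting at $(g_0, c)$ and witnessing that $g_0 X = \psi(\overline{M}; g_0, c)$ divides over $\emptyset$. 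Hence $X$ is not $f$-generic, and the ``in particular'' clause follows by applying this with $X = H$.

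The main subtlety is the bookkeeping around the indiscernible extraction: one must ensure that the initial tuple of the extracted sequence has the same $\emptyset$-type as $(g_0, c)$ so that the dividing witnessed at the end is genuinely of a translate of the original $X$, rather than of some $\emptyset$-conjugate of it. This is precisely what the pigeonhole step and the final automorphism arrange; everything else is routine group-theoretic or combinatorial manipulation.
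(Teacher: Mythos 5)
Your proof is correct and follows essentially the same route as the paper's: produce infinitely many pairwise disjoint left translates of $X$ from the infinite index hypothesis, then extract an $\emptyset$-indiscernible sequence witnessing that a translate of $X$ divides over $\emptyset$. In fact you are slightly more careful than the printed proof on two points it glosses over --- choosing the $g_i$ in distinct cosets of $aHa^{-1}$ rather than of $H$ (which is what disjointness of the $g_iX$ actually requires), and arranging via the automorphism that the dividing formula is genuinely a translate of $X$ over its original parameter.
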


\begin{proof}
Let $(g_i : i < \upomega)$ be a sequence of elements of $G$ which lie in distinct cosets of $H$.
So $g_i X \cap g_j X = \emptyset$ when $i \ne j$.
After passing to a highly saturated elementary extension and applying Ramsey and saturation we obtain an sequence $(h_i : i < \upomega)$ of elements of $G$ which is indiscernible over the defining parameters of $X$ and satisfies $h_i X \cap h_j X = \emptyset$ when $i \ne j$.
So $X$ is not $f$-generic.
\end{proof}

\begin{lemma}
\label{lem:equiv-rel}
Suppose that $T$ is simple, $X$ is a definable subset of $G$, $\approx$ is a definable equivalence relation on $X$, and each $\approx$-class is $f$-generic.
Then there are only finitely many $\approx$-classes.
\end{lemma}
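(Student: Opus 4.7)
The strategy is to derive a contradiction by producing an explicit instance of dividing. Assume toward a contradiction that $\approx$ has infinitely many classes. Choose a tuple $\bar c$ of parameters from $\overline M$ defining both $X$ and $\approx$, so there are formulas $\chi(x;\bar c)$ defining $X$ and $\phi(x,y;\bar c)$ such that, for $b \in X$, the class $[b]$ is defined by $\phi(x,b;\bar c) \wedge \chi(x;\bar c)$. Using saturation of $\overline M$, pick a sequence $(a_i : i < \omega)$ in $X$ of representatives of pairwise distinct $\approx$-classes.

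Next, by the standard Ramsey/compactness construction (extracting an indiscernible sequence with prescribed Ehrenfeucht--Mostowski type), replace $(a_i)$ by a sequence $(b_i : i < \omega)$ in $\overline M$ which is $\bar c$-indiscernible and whose EM-type over $\bar c$ is contained in the set of $L(\bar c)$-formulas satisfied by increasing subtuples of $(a_i)$. The $2$-formula ``$\neg(y \approx z)$'' holds on every pair $(a_i,a_j)$ with $i < j$, so it lies in this EM-type; in particular $b_i \not\approx b_j$ for all $i < j$, so $(b_i)$ also consists of representatives of pairwise distinct $\approx$-classes.

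Now form the enlarged sequence $((b_i, \bar c) : i < \omega)$. Because $\bar c$ is constant and $(b_i)$ is $\bar c$-indiscernible, this enlarged sequence is $\emptyset$-indiscernible. The formulas $\phi(x, b_i; \bar c) \wedge \chi(x; \bar c)$ define the pairwise disjoint classes $[b_i]$, so the family $\{ \phi(x, b_i; \bar c) \wedge \chi(x; \bar c) : i < \omega \}$ is $2$-inconsistent. By the definition of dividing, this shows that the formula defining $[b_0]$ divides over $\emptyset$, so $[b_0]$, viewed as its own left translate by the identity element of $G$, divides over $\emptyset$. This contradicts the hypothesis that every $\approx$-class is $f$-generic.

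The mildly delicate point is the bookkeeping around the parameters $\bar c$ of the equivalence relation: the definition of $f$-generic in the excerpt requires non-dividing over $\emptyset$ rather than over $\bar c$, so one must absorb the constant tuple $\bar c$ into each term of the indiscernible sequence before reading off dividing. Once that setup is arranged, the contradiction is immediate from the definitions.
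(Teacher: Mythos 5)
Your proof is correct and follows essentially the same route as the paper's: both arguments produce a $\bar c$-indiscernible sequence of parameters defining pairwise distinct (hence disjoint) $\approx$-classes, absorb the constant tuple $\bar c$ into the sequence to get $\emptyset$-indiscernibility, and read off that a class's defining formula divides over $\emptyset$, contradicting $f$-genericity. The only cosmetic difference is how the indiscernible sequence is obtained (Ramsey extraction from class representatives in your version, versus a non-algebraic canonical parameter and an indiscernible sequence in its type over $\bar c$ in the paper), which does not change the substance of the argument.
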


\begin{proof}
Suppose towards a contradiction that there are infinitely many $\approx$-classes.
Let $c$ be a finite tuple of parameters over which $X$ and $\approx$ are definable.
Then there is an $\approx$-class $D$ with canonical parameter $d$ such that $d\notin \acl(c)$. 
Let $\phi(x,d,c)$ be a formula defining $D$ and $(d_{i}:i<\upomega)$ be an infinite sequence of realizations of $\tp(d/c)$ which is indiscernible over $c$ and satisfies $d_{0} = d$.
Then $( (c,d_{i}):i<\upomega)$ is indiscernible, and the formulas $\phi(x,d_{i},c)$ are pairwise inconsistent, so $\phi(x,d,c)$ divides over $\emptyset$. This contradicts that $\phi(x,d,c)$ defines the set $D$ which is an $f$-generic subset of $K^{n}$.
\end{proof}

We now prove Lemma~\ref{fact:generic-product}, which we could not find in the literature.
\begin{lemma}
\label{fact:generic-product}  
Suppose $T$ is simple and $G, H$ are $\emptyset$-definable groups  in $\overline{M}$.
Fix a small set $A$ of parameters and $(a,b)\in G\times H$.
Then $\tp((a,b)/A)$ is $f$-generic in $G\times H$ if and only if the following conditions hold:
\begin{enumerate}
\item $\tp(a/A)$ is an $f$-generic type of $G$, 
\item $\tp(b/A)$ is an $f$-generic type of $H$,
\item and $a$ is independent from $b$ over $A$.
\end{enumerate}
\end{lemma}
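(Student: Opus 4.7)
The plan is to establish both implications using the characterization of $f$-genericity from Fact~\ref{fact:generic}(2) (taken as an iff), together with the standard symmetry, monotonicity and transitivity of non-forking in simple theories, and the observation that ``not dividing over $\emptyset$'' implies ``not dividing over any set $A$'' (since every $A$-indiscernible sequence is automatically $\emptyset$-indiscernible).

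\emph{Backward direction.} I assume (1)--(3) and pick $(c,d)\in G\times H$ with $(c,d)\forkindep_A(a,b)$; I must check $(ca,db)\forkindep_\emptyset A(c,d)$. Projecting the hypothesis gives $c\forkindep_A a$ and $d\forkindep_A b$, so Fact~\ref{fact:generic}(2) applied to (1), (2) yields $ca\forkindep_\emptyset Ac$ and $db\forkindep_\emptyset Ad$. Decomposing $(a,b)\forkindep_A(c,d)$ in both orders and combining with (3) via transitivity produces $a\forkindep_A(b,c,d)$ and $b\forkindep_A(a,c,d)$, whence $a\forkindep_{Ac}d$, $b\forkindep_{Ad}c$, and $a\forkindep_{Acd}b$. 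Since $c\in Ac$ makes $ca$ and $a$ inter-definable over $Ac$, the pair $ca\forkindep_\emptyset Ac$ and $a\forkindep_{Ac}d$ chains via transitivity to $ca\forkindep_\emptyset A(c,d)$; symmetrically one obtains $db\forkindep_\emptyset A(c,d)$, and then (using $b\forkindep_{Acd}a$ and the analogous inter-definability of $db$ with $b$ over $Ad$) $db\forkindep_\emptyset A(c,d,a)$. Because $ca\in\dcl(c,a)\subseteq A(c,d,a)$, monotonicity plus the base change from $\emptyset$ to $\{ca\}$ upgrades this to $db\forkindep_{ca}A(c,d)$, and one final transitivity step assembles $(ca,db)\forkindep_\emptyset A(c,d)$.

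\emph{Forward direction.} Assume $\tp((a,b)/A)$ is $f$-generic. For (1), by Fact~\ref{fact:generic}(5) it suffices to show that every $A$-definable $X\ni a$ is $f$-generic in $G$. Note $X\times H\ni(a,b)$ is $A$-definable in $G\times H$, hence $f$-generic; if $X$ failed to be $f$-generic in $G$, Fact~\ref{fact:intersect} would produce $g_1,\ldots,g_k\in G$ with $\bigcap_i g_iX=\emptyset$, whence $\bigcap_i(g_i,e_H)(X\times H)=\emptyset$, and a Ramsey extension of $(g_i,e_H)$ to a $\emptyset$-indiscernible sequence witnesses dividing of a translate of $X\times H$ --- contradicting $f$-genericity of $X\times H$. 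Part (2) is symmetric. For (3), pick $c\in G$ with $c\forkindep_A(a,b)$ and apply Fact~\ref{fact:generic}(2) to the translate $(c,e_H)$, obtaining $(ca,b)\forkindep_\emptyset Ac$. A careful manipulation exploiting the identity $a=c^{-1}(ca)\in\dcl(c,ca)$ together with the freeness of $c$ from $(a,b)$ over $A$ is intended to eliminate $c$ and yield $b\forkindep_A a$.

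\emph{Main obstacle.} The most delicate step is (3) of the forward direction: extracting the ``$c$-free'' independence $a\forkindep_A b$ from the statement $(ca,b)\forkindep_\emptyset Ac$ requires carefully combining the inter-definability of $a$ with $ca$ modulo $c$ and the independence of $c$ from $(a,b)$ over $A$ to remove $c$ from the picture. The backward direction is algebraically more routine but still demands careful bookkeeping of the transitivity and monotonicity steps, and crucially uses (3) to glue the two individual $f$-genericities into the joint one.
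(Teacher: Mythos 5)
Your backward direction is correct and is in fact carried out in more detail than in the paper, which dismisses that implication with ``the definitions and forking calculus easily show\dots''; your chain of transitivity and base-monotonicity steps there checks out. The problems are both in the forward direction. First, your argument for (1): the inference from $\bigcap_i (g_i,e_H)(X\times H)=\emptyset$ to ``$X\times H$ is not $f$-generic'' is the \emph{converse} of Fact~\ref{fact:intersect}, and that converse is false --- an $f$-generic set can have finitely many pairwise disjoint translates (e.g.\ the squares in $(K^*,\times)$ for $K$ pseudofinite form an $f$-generic subgroup of index $2$, whose two cosets are disjoint). Nor does Ramsey let you extract an indiscernible sequence from the finite tuple $(g_1,\dots,g_k)$. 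The correct (and easier) route is straight from the definition of dividing: if some translate $gX$ divides over $\emptyset$, witnessed by an indiscernible sequence of parameters with a $k$-inconsistent family of instances, then the same sequence witnesses that $gX\times H$ divides over $\emptyset$; contrapositively, $f$-genericity of $X\times H$ gives $f$-genericity of $X$. This is what the paper's ``it follows directly'' means.

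Second, and more seriously, item (3) of the forward direction --- which you yourself identify as the crux --- is not proved: you stop at $(ca,b)\forkindep_{\emptyset} Ac$ and say that a manipulation ``is intended to'' eliminate $c$. That is exactly the missing content, and it is where the paper does something genuinely different. The paper translates by a \emph{pair} $(c,d)$ with $\tp(c/A)$ and $\tp(d/A)$ both $f$-generic and $(c,d)$ independent from $(a,b)$ over $A$; this yields both $(ca,db)\forkindep_{\emptyset} A,c,d$ and $ca\forkindep_{\emptyset} db$, and combining the two (transitivity plus base monotonicity) gives $ca\forkindep_{Acd} db$, hence $a\forkindep_{Acd} b$ since $a,ca$ and $b,db$ are interdefinable over $Acd$; finally $(a,b)\forkindep_{A}(c,d)$ lets one drop $c,d$ from the base. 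Translating only in the first coordinate, as you propose, produces a statement in which $b$ enters only passively, and I do not see how to extract the cross-independence $a\forkindep_A b$ from it. Until you supply an actual derivation for this step (or switch to the two-coordinate translation), the forward direction is incomplete.
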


\begin{proof}
The definitions and ``forking calculus" easily show that $(1),(2),$ and $(3)$ together imply the $\tp((a,b)/A)$ is $f$-generic in $G \times H$.
The difficulty lies in showing that all $f$-generic types of $G\times H$ are of this form.
We suppose that $\tp((a,b)/A)$ is $f$-generic in $G\times H$. 
It follows directly that $\tp(a/A)$, and $\tp(b/A)$ are $f$-generic types of $G, H$ respectively.  
It remains to prove that $a$ is independent from $b$ over $A$.
Suppose that $(c,d) \in G \times H$, $\tp(c/A), \tp(d/A)$ is $f$-generic in $G,H$, respectively, and $(c,d)$ is independent from $(a,b)$ over $A$.
By Fact~\ref{fact:generic} $ca$ is independent from $db$ over $\emptyset$.
As $\tp((a,b)/A)$ is $f$-generic in $G\times H$, and $(a,b)$ is independent from $(c,d)$ over $A$, we see that that $(ca,db)$ is independent from $A,c,d$ over $\emptyset$.
It follows that $a$ is independent from $b$ over $A,c,d$, and then that $a$ is independent from $b$ over $A$. 
\end{proof}

\subsection{Generics in definable fields}
Now suppose $K$ is an infinite field definable (say over $\emptyset$) in $\overline{M}\models T$.
Everything we say remains true for $K$ a type-definable field in $\overline{M}$. 
We have two attached groups, the additive group $(K,+)$ and the multiplicative group $(K^{*},\times)$, recall that $K^{*} = K\setminus \{0\}$.
A definable $X \subseteq K$ is \textbf{additively $f$-generic} if it is $f$-generic in $(K,+)$ and is \textbf{multiplicatively $f$-generic} if $X \cap K^*$ is an $f$-generic in $(K^*,\times)$, and we make the analogous definitions for a type concentrated on $K$.
The first claim of Fact~\ref{fact:generic-basic} is \cite[Proposition 3.1]{supersimple-field}.
Uniqueness of $f$-generic types in stable fields is \cite[Theorem 5.10]{Poizat}.

\begin{fact}
\label{fact:generic-basic}
Suppose that $T$ is simple.
Let $X$ be a definable subset of $K$, $A$ be a small set of parameters, and $p = \tp(a/A)$ for some $a \in K$.
Then the following are equivalent:
\begin{enumerate}
\item $X$ is an additive $f$-generic.
\item $X$ is a multiplicative $f$-generic.
\end{enumerate}
Furthermore the following are equivalent:
\begin{enumerate}
\item $p$ is an additive $f$-generic \item $p$ is a multiplicative $f$-generic.
\end{enumerate}
If $T$ is stable then there is a unique additive $f$-generic type over $K$. 
\end{fact}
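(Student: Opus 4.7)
The plan is to prove the fact in two steps. First, reduce the set-level statement to the type-level statement using Fact~\ref{fact:generic}(5), which identifies $f$-genericity of a definable set with containment in some complete $f$-generic type. Since $\{0\}$ is never additively $f$-generic in an infinite field (Lemma~\ref{lem:index-generic}), we may restrict attention to $p = \tp(a/A)$ with $a \in K^*$, and it then suffices to show that $p$ is additively $f$-generic iff $p$ is multiplicatively $f$-generic.

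For this equivalence I would use the criterion of Fact~\ref{fact:generic}(2) together with the affine action of $K^* \ltimes K$ on $K$ by $(x,y)\cdot z = xz + y$. Assuming $p$ is additively $f$-generic, take $(x,y) \in K^* \times K$ independent from $a$ over $A$; by introducing an auxiliary independent generic parameter and applying symmetry, transitivity, and base monotonicity of non-dividing, one shows $xa + y$ is independent of $\{A,x,y\}$ over $\emptyset$, and specializing $y = 0$ yields the multiplicative criterion. The converse is the symmetric argument with $x = 1$. The algebraic identities $xa + y = x(a + y/x)$ and $xa = (xa + y) - y$ are what allow one to transfer independence between additive, multiplicative, and affine witnesses while tracking the parameter sets on each side. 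This follows the proof of \cite[Proposition 3.1]{supersimple-field}, whose argument goes through in any definable field inside a simple theory, not only the supersimple case.

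For uniqueness in the stable case, I would invoke the standard fact that in a stable theory the set of generic types of a connected $\emptyset$-definable group is a singleton, combined with the connectedness of $(K,+)$ for an infinite stable field, which is exactly the content of Poizat~\cite[Theorem 5.10]{Poizat}: a proper finite-index definable additive subgroup $H$ of $K$ would, by the equivalence just established, also be multiplicatively $f$-generic, and the resulting constraints force $H = K$. The main obstacle lies in the affine-criterion step in the simple setting: stationarity and canonical bases are unavailable, so one must carefully track the parameter sets over which non-dividing holds, particularly when the auxiliary generic parameter is introduced. The presence of $0$ also breaks the symmetry between $(K,+)$ and $(K^*,\times)$ and needs to be handled separately throughout.
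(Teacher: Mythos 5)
The paper does not actually prove Fact~\ref{fact:generic-basic}: it is quoted as a black box, with the additive/multiplicative equivalence attributed to \cite[Proposition 3.1]{supersimple-field} and the stable uniqueness to \cite[Theorem 5.10]{Poizat}. So there is no in-paper argument to compare against, and your sketch is essentially a reconstruction of the cited proofs: the reduction from definable sets to complete types via Fact~\ref{fact:generic}(5), the disposal of $\tp(0/A)$ via Lemma~\ref{lem:index-generic}, the use of the independence criterion of Fact~\ref{fact:generic}(2), and the appeal to connectedness of $(K,+)$ plus uniqueness of generics in connected stable groups are all the standard route, and the references you name are exactly the ones the paper relies on.

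The one place your sketch would fail if executed literally is the claim that the converse direction is ``the symmetric argument with $x=1$.'' The two directions are genuinely asymmetric. Additive $\Rightarrow$ multiplicative is the easy one and needs no auxiliary parameter: for $b\forkindep_A a$, multiplication by $b$ is an automorphism of $(K,+)$ definable over $b$, so $\tp(ba/A,b)$ is additively $f$-generic and hence $ba\forkindep_\emptyset A,b$. Multiplicative $\Rightarrow$ additive cannot be obtained by symmetry, because translation by a fixed $b$ is \emph{not} an automorphism of $(K^*,\times)$; there is no multiplicative analogue of the scaling step. This is precisely where the auxiliary generic is needed: one chooses $c$ with $\tp(c/A,a)$ additively $f$-generic, uses scaling by $a$ and Fact~\ref{fact:generic}(3) to see that $\tp(ac/A)$ is additively $f$-generic, and then recovers additive $f$-genericity of $\tp(a/A)$ from the identity $a=(ac)c^{-1}$ by pushing genericity back through Fact~\ref{fact:generic}(3) and (4). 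Your write-up attaches the auxiliary parameter to the forward direction (where it is unnecessary) and leaves the mechanism of the hard direction unstated beyond an acknowledgment that an obstacle exists; that acknowledgment is correct, but the ``symmetric argument'' phrasing papers over the only nontrivial step of the whole fact.
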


We let $D_n$ be the group $((K^*)^n, \times)$. 
Corollary~\ref{corollary:generic} is a higher dimensional version of Fact~\ref{fact:generic-basic}.
The first claim of Corollary~\ref{corollary:generic} follows from Fact~\ref{fact:generic-basic},  Lemma~\ref{fact:generic-product}, and induction on $n$.
The second claim follows from the first claim and Fact~\ref{fact:generic}.5.

\begin{corollary} \label{corollary:generic} 
Suppose that $T$ is simple, $A$ is a small set of parameters, $a = (a_{1},..,a_{n})\in K^{n}$, and $p(x) = \tp(a/A)$.
Then $p$ is an $f$-generic type of $(K^{n},+)$ if and only if $p$ is an $f$-generic type of $D_{n}$.
So if $X\subseteq K^{n}$ is definable, then $X$ is $f$-generic in $(K^{n},+)$ if and only if $X\cap D_{n}$ is $f$-generic in $D_{n}$.
\end{corollary}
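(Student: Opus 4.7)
The plan is to establish the type-level equivalence by induction on $n$ and then deduce the set-level equivalence from Fact~\ref{fact:generic}(5). The base case $n=1$ is exactly Fact~\ref{fact:generic-basic}. For the inductive step, write $a = (a',a_{n+1})$ with $a' = (a_1,\ldots,a_n)$ and decompose $(K^{n+1},+) = (K^n,+) \times (K,+)$. Applying Lemma~\ref{fact:generic-product} to this product gives that $\tp(a/A)$ is additively $f$-generic if and only if (i) $\tp(a'/A)$ is additively $f$-generic in $(K^n,+)$, (ii) $\tp(a_{n+1}/A)$ is additively $f$-generic in $(K,+)$, and (iii) $a'$ is independent from $a_{n+1}$ over $A$. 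Applying Lemma~\ref{fact:generic-product} to $D_{n+1} = D_n \times (K^*,\times)$ yields the analogous statement with ``multiplicatively'' in place of ``additively.'' Since condition (iii) is intrinsic to $\tp(a/A)$ and does not depend on the group structure, the inductive hypothesis takes care of (i), Fact~\ref{fact:generic-basic} takes care of (ii), and the two $f$-genericity conditions become equivalent.

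One bookkeeping point I would flag: the formula $x_i = 0$ cuts out a subgroup of $(K^n,+)$ of infinite index, so by Lemma~\ref{lem:index-generic} it is not additively $f$-generic; hence any additively $f$-generic type of $(K^n,+)$ automatically concentrates on $D_n$. This lets us move freely between $K^n$ and $D_n$ when manipulating $f$-generic types, which is what reconciles the two sides of Lemma~\ref{fact:generic-product} living in formally different products.

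For the second claim, fix $A$ over which $X$ is definable. If $X$ is $f$-generic in $(K^n,+)$ then by Fact~\ref{fact:generic}(5) it lies in some complete $A$-type $p$ that is $f$-generic in $(K^n,+)$; the preceding remark gives $X \cap D_n \in p$, the first claim yields that $p$ is $f$-generic in $D_n$, and Fact~\ref{fact:generic}(5) again gives that $X \cap D_n$ is $f$-generic in $D_n$. Conversely, if $X \cap D_n$ is $f$-generic in $D_n$, it lies in some complete $A$-type $q$ that is $f$-generic in $D_n$, hence in $(K^n,+)$ by the first claim; since $X \supseteq X \cap D_n \in q$, we conclude $X \in q$ and therefore $X$ is $f$-generic in $(K^n,+)$. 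Given the results quoted from the preceding subsections, no step presents serious difficulty; the real content sits inside Lemma~\ref{fact:generic-product}, whose symmetric independence clause (iii) is what allows additive and multiplicative $f$-genericity to match up coordinate by coordinate through the induction.
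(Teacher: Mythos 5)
Your proof is correct and follows exactly the route the paper indicates: induction on $n$ using Fact~\ref{fact:generic-basic} as the base case and Lemma~\ref{fact:generic-product} for the inductive step, with the set-level statement deduced from Fact~\ref{fact:generic}(5). The bookkeeping observation that additively $f$-generic types must concentrate on $D_n$ (via Lemma~\ref{lem:index-generic}) is a worthwhile detail the paper leaves implicit.
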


Proposition~\ref{prop:generic-open} is our main tool when dealing with large simple fields.

\begin{proposition} 
\label{prop:generic-open}  
Suppose that $T$ is simple and $K$ is large.
Let $X$ be definable subset of $K^n$ which contains a nonempty EE subset.
Then $X$ is $f$-generic for $(K^{n},+)$, and is hence $f$-generic for $D_n$.
\end{proposition}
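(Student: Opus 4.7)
The plan is a proof by contradiction. Suppose $X = f(V(K))$ is a nonempty EE subset of $K^n$ that is not $f$-generic in $(K^n,+)$, with $c$ the parameters defining $V$ and $f$. Then some translate of $X$ divides over $\emptyset$, and by saturation of $\overline{M}$ together with the standard characterization of dividing via $k$-inconsistency, I would extract an $\emptyset$-indiscernible sequence $((g_i,c_i))_{i<\omega}$ with $(g_0,c_0)=(g,c)$ and some fixed $k$ such that any $k$ of the sets $g_i + X^{(c_i)}$ have empty intersection in $K^n$; here $X^{(c_i)}=f^{(c_i)}(V^{(c_i)}(K))$ is the EE set obtained by reinterpreting the defining formula with parameters $c_i$, witnessed by a conjugate smooth $K$-variety $V^{(c_i)}$ and \'etale morphism $f^{(c_i)}$.

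I would then translate this into a geometric statement. By Corollary~\ref{cor:translate} each $g_i + X^{(c_i)}$ is EE, and iterating Corollary~\ref{cor:intersection} together with Fact~\ref{fibre-product} shows $Z := \bigcap_{i<k}(g_i + X^{(c_i)})$ equals $f_Z(V_Z(K))$ for the smooth $K$-variety $V_Z$ given as the iterated fibre product $V^{(c_0)} \times_{\Aa^n} \cdots \times_{\Aa^n} V^{(c_{k-1})}$ under the translated \'etale morphisms $x \mapsto g_i + f^{(c_i)}(x)$. The hypothesis forces $V_Z(K) = \emptyset$. A direct check shows $V_Z$ is nonempty as a $K$-variety: each image $f^{(c_i)}(V^{(c_i)}) \subseteq \Aa^n$ is a nonempty Zariski-open subset (\'etale morphisms are flat, hence open, and $V^{(c_i)}(K)\neq\emptyset$ by indiscernibility with $V(K)\neq\emptyset$), and finitely many nonempty open subsets of the irreducible $\Aa^n$ always meet, so $V_Z(\kalg)\neq\emptyset$.

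The main obstacle, the crux of the proof, is producing an actual $K$-point of $V_Z$ in order to contradict $V_Z(K) = \emptyset$. The intended route is induction on $k$: the base case $k=1$ holds because $V(K)\neq\emptyset$, and at stage $j$, with $Y_{<j} := \bigcap_{i<j}(g_i + X^{(c_i)})$ assumed nonempty and therefore (by Corollary~\ref{corollary:intersection}) Zariski-dense EE in $K^n$, we need $Y_{<j} \cap (g_j + X^{(c_j)}) \neq \emptyset$. After extracting a Morley sequence, $(g_j, c_j)$ is independent from $(g_{<j}, c_{<j})$ over $\emptyset$ in the simple-theoretic sense; combining this independence with largeness of $K$---invoked via Fact~\ref{fact:pop-def} (either that $K$ is existentially closed in $K((t))$, or the Zariski-density of smooth $K$-points on irreducible varieties)---should force this intersection to be nonempty, and an appeal to Fact~\ref{fact:generic}$(5)$ via Corollary~\ref{corollary:generic} will then yield $f$-genericity for $D_n$ too. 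Making precise the bridge from simple-theoretic independence of the indiscernible parameters to the existence of a common $K$-rational point on the smooth fibre product is the hardest step, and it is exactly where both largeness and simplicity must work together.
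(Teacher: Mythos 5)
Your proof has a genuine gap, and it sits exactly where you say it does: you never actually produce a $K$-point of $\bigcap_{i<k}(g_i + X^{(c_i)})$. The appeal to ``independence of $(g_j,c_j)$ from $(g_{<j},c_{<j})$ plus largeness'' is not an argument: largeness gives Zariski density of a \emph{nonempty} EE set (Corollary~\ref{corollary:intersection} explicitly has the hypothesis $\bigcap X_i \ne \emptyset$), and two Zariski-dense definable sets can perfectly well be disjoint, so density cannot bootstrap nonemptiness. Showing that translated conjugates of an EE set along a Morley sequence must meet is essentially the assertion that $X$ is $f$-generic --- i.e.\ the proposition itself --- so your induction has no independent content at its inductive step. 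Your $\kalg$-point of the fibre product (via openness of \'etale images) does not help, since descending from $\kalg$-points to $K$-points is again the whole problem.

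The paper avoids this obstruction entirely by a multiplicative trick. After translating so that $\overline{0} \in X$ (Corollary~\ref{cor:translate}), it uses Corollary~\ref{corollary:generic} to convert additive non-$f$-genericity of $X$ into multiplicative non-$f$-genericity of $X' = X \cap D_n$ in $D_n = ((K^*)^n,\times)$, and then Fact~\ref{fact:intersect} to get $g_1,\ldots,g_k \in D_n$ with $\bigcap g_i X' = \emptyset$. The point is that \emph{multiplicative} translates all fix $\overline{0}$, so $\bigcap g_i X$ is automatically nonempty (it contains $\overline{0}$); nonemptiness comes for free rather than having to be proved. Corollary~\ref{corollary:intersection} then makes $\bigcap g_i X$ Zariski dense in $K^n$, yet $\bigcap g_i X' = \emptyset$ forces it into the union of coordinate hyperplanes --- a contradiction. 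If you want to salvage your additive approach you would need some substitute for this fixed-point observation; as written, the ``crux'' step is a hole, not a proof.
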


Thus if $T$ is simple and large then any definable subset of $K^n$ with nonempty interior in the \'etale open topology is $f$-generic.
If $K$ is bounded $\pac$ then a definable subset of $K^n$ is $f$-generic if and only if it is $f$-generic~\cite{secondpaper}. 

\begin{proof} 
Suppose towards a contradiction that $X$ is not $f$-generic for $(K^{n},+)$. 
By Corollary~\ref{corollary:generic}, $X\cap D_{n}$ is not $f$-generic for $D_{n}$.
We may suppose that $X$ contains $\overline{0} = (0,\ldots,0)$ as both EE subsets and $f$-generic subsets of $K^{n}$ are closed under additive translation  (by Corollary~\ref{cor:translate} and definitions). 
Let $X' = X\cap D_{n}$. By Corollary ~\ref{corollary:generic}, $X'$ is not $f$-generic in $D_{n}$.
By Fact~\ref{fact:intersect} there are $g_{1},..,g_{k}\in D_{n}$ such that $\bigcap_{i=1}^{k} g_{i}X' = \emptyset$. 
Then $\bigcap_{i=1}^{k}g_{i}X$ is nonempty, as it contains $\overline{0}$, but is contained in $K^{n}\setminus D^{n}$ and is hence not Zariski dense in $K^n$. 
This contradicts Corollary \ref{corollary:intersection}. 
\end{proof}

Fact~\ref{fact:coset-sum} will be crucial for Theorem~\ref{thm:brauer}.
It is proven in \cite{supersimple-field}.

\begin{fact}
\label{fact:coset-sum}
Suppose that $T$ is simple.
Let $H$ be a finite index definable subgroup of $(K^{*},\times)$ and $H_1, H_2$ be cosets of $H$.
Then $H_{1} + H_{2}$ contains $K^{*}$, namely every nonzero element of $K$ is of the form $a+b$ where $a\in H_{1}$ and $b\in H_{2}$.
\end{fact}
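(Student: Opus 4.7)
The plan is to fix an arbitrary $c \in K^*$ and reduce the claim to showing that the set $H_1 \cap (c - H_2)$ is nonempty; any $a$ in this intersection then gives $c = a + (c - a)$ with $a \in H_1$ and $c - a \in H_2$. The first step is to verify that each coset of $H$ is multiplicatively $f$-generic in $(K^*, \times)$: since $K^*$ is a finite union of cosets of $H$ and is trivially $f$-generic in itself, one coset must contain an $f$-generic complete type and so is $f$-generic, and then all cosets are $f$-generic by multiplicative translation. By Fact~\ref{fact:generic-basic} each coset is also additively $f$-generic in $(K, +)$, so $H_1$ and the additive translate $c - H_2$ are both additively $f$-generic subsets of $(K, +)$.

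Next I would record the structural observation that $H_1 + H_2$ is closed under multiplication by $H$: if $a + b = c'$ with $a \in H_1,\ b \in H_2$ and $h \in H$, then $ha + hb = hc'$ with $ha \in H_1,\ hb \in H_2$. Thus $(H_1 + H_2) \cap K^*$ is a union of cosets of $H$, and so is its complement in $K^*$; in particular, either $c \in H_1 + H_2$ or the entire coset $cH$ avoids $H_1 + H_2$, and it suffices to rule out the latter.

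The main obstacle is that two additively $f$-generic subsets of a simple definable group can be disjoint — for example, two distinct cosets of a finite-index subgroup are disjoint $f$-generic sets — so $f$-genericity of $H_1$ and $c - H_2$ does not by itself force them to meet. To overcome this I would fix an $f$-generic type $p$ of $(K^*, \times)$ over a small parameter set $A \supseteq \{c\}$ with $p$ concentrated on $H_1$, realise $a \models p$, and observe that $c - a$ is additively $f$-generic by translation invariance, hence multiplicatively $f$-generic by Fact~\ref{fact:generic-basic}, and therefore concentrated on some coset $H_j$. The technical heart of the proof, executed in detail in \cite{supersimple-field}, is to show that by varying the choice of $p$ — using both the multiplicative action on $f$-generic types concentrated on $H_1$ and the independence theorem for simple theories to amalgamate $f$-generic types with prescribed coset behaviour — the index $j$ can be made to equal the index of $H_2$. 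With such a $p$, $a \in H_1$ and $c - a \in H_2$ witness $c \in H_1 + H_2$, and the fact follows.
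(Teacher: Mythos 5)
Your preliminary reductions are all correct and match the standard setup: reducing to $H_1 \cap (c - H_2) \ne \emptyset$, deducing that every coset of $H$ is $f$-generic from the fact that $K^*$ is a finite union of translates of $H$ together with Fact~\ref{fact:generic}(5) and translation invariance, transferring between additive and multiplicative $f$-genericity via Fact~\ref{fact:generic-basic}, and observing that $H_1 + H_2$ is $H$-invariant so that it suffices to rule out $cH \cap (H_1+H_2) = \emptyset$. You have also honestly identified the real obstruction: two $f$-generic sets in a simple theory can be disjoint, so genericity of $H_1$ and of $c - H_2$ does not by itself yield a common point. For what it is worth, the paper itself offers no proof of this Fact either; it simply cites \cite{supersimple-field}, so your write-up already contains strictly more than the text you are being compared against.

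That said, as a proof your proposal has a genuine gap, and it sits exactly at the decisive step. Everything up to "therefore $c-a$ is concentrated on some coset $H_j$" is routine; the theorem lives entirely in the claim that $j$ can be forced to equal the index of $H_2$, and for that you offer only a gesture ("varying $p$", "the multiplicative action", "the independence theorem to amalgamate $f$-generic types with prescribed coset behaviour") followed by a deferral to \cite{supersimple-field}. This is not a small omission: the naive moves all fail here. Multiplying $a$ and $c-a$ by a generic element of $H$ does not change either coset, so the "multiplicative action" you invoke does not obviously move $H_j$; and the coset of $c - a$ genuinely depends on the choice of the generic type $p$ and not just on $H_1$ (in a pseudofinite field the sum of two independent generic squares can be a square or a nonsquare), so one cannot argue that the coset is determined and then computed. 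The independence theorem is indeed the right tool --- it is what replaces the counting argument available over finite fields --- but stating that it is used is not the same as using it: one has to set up the right amalgamation problem (roughly, realizing a generic type of the target coset $cH$ simultaneously as $u + v$ with $u, v$ independent generics of $H_1, H_2$, and then rescaling by an element of $H$ to hit $c$ exactly). Until that configuration is written down and the hypotheses of the independence theorem are verified for it, the proof is incomplete.
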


\section{Proof of Theorem ~\ref{thm:bounded}}
This section is the proof of Theorem~\ref{thm:bounded}.
Remember that when we say that $K$ is bounded we mean that for every $n$,  $K$ has only finitely many extensions of any given degree inside $K^{\mathrm{sep}}$.
We first make a few reductions.
Fact~\ref{fact:reduce} is well-known, we include a proof for the sake of completenes.

\begin{fact}
\label{fact:reduce}
The following are equivalent:
\begin{enumerate}
\item $K$ is bounded,
\item for any $n$ there are only finitely many degree $n$ separable extensions of $K$ up to $K$-algebra isomorphism.
\end{enumerate}
\end{fact}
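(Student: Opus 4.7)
The equivalence amounts to comparing two counts of separable degree $n$ extensions: subfields of $K^{\mathrm{sep}}$ containing $K$ with degree $n$ over $K$, versus isomorphism classes of separable $K$-algebras that are degree $n$ field extensions. My plan is to show both directions by exploiting the fact that every separable extension embeds into $K^{\mathrm{sep}}$, together with the count of such embeddings.

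For $(1) \Rightarrow (2)$: I would argue that every separable degree $n$ extension $L/K$ admits at least one $K$-embedding into $K^{\mathrm{sep}}$, whose image is a subfield of $K^{\mathrm{sep}}$ of degree $n$ over $K$. Since $K$-isomorphic extensions can be grouped by any such image, the map sending a subfield of degree $n$ to its $K$-algebra isomorphism class is surjective onto the set of isomorphism classes. Finiteness of the domain thus yields finiteness of the target.

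For $(2) \Rightarrow (1)$: The key point is that for a separable extension $L/K$ of degree $n$, there are exactly $n$ distinct $K$-embeddings $L \hookrightarrow K^{\mathrm{sep}}$, so at most $n$ subfields of $K^{\mathrm{sep}}$ are $K$-isomorphic to $L$ (the fibers of \enquote{embedding $\mapsto$ image} have size $|\mathrm{Aut}(L/K)|$). Hence if there are $N$ isomorphism classes of separable degree $n$ extensions, there are at most $nN$ subfields of $K^{\mathrm{sep}}$ of degree $n$ over $K$.

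There is no real obstacle here; the proof is essentially a bookkeeping argument built on the standard fact that the number of $K$-embeddings of a separable degree $n$ extension into $K^{\mathrm{sep}}$ equals $n$. The mildly delicate point, which I would state explicitly, is that distinct embeddings can have the same image (they differ by an element of $\mathrm{Aut}(L/K)$), but this only decreases the number of subfields per isomorphism class, so the bound $nN$ still holds.
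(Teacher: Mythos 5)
Your proof is correct and follows essentially the same route as the paper's: both directions come down to the observation that each $K$-isomorphism class of degree $n$ separable extensions is represented by at least one and at most $n$ subfields of $K^{\mathrm{sep}}$ (the paper phrases the bound via the primitive element theorem and conjugates of a root, you via counting embeddings, but these are the same count). No gaps.
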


\begin{proof}
By the primitive element theorem a degree $n$ separable extension  $L$  of $K$ is of the form $L = K(\alpha)$ where $\alpha$ is a root of a separable irreducible  monic degree $n$ polynomial $p(x) \in K[x]$.
So $L$ has at most $n$ distinct conjugates over $K$ in $K^{\mathrm{sep}}$, the fact easily follows.
\end{proof}

We set some notation.
Given $a = (a_{0},...,a_{n-1})\in K^{n}$ we let $p_{a}(x)$ denote the polynomial  $x^{n} + a_{n-1}x^{n-1} + \ldots + a_{1}x + a_{0}$.  
We let $U$ be the set of $a \in K^n$ such that $p_{a}$ is separable and irreducible in $K[x]$.
Note that $U$ is definable.
Given $p \in K[x]$ we let $(p)$ be the ideal in $K[x]$ generated by $p$.
For each $a \in U$ the field extension $K(\alpha)$ generated over $K$ by a root $\alpha$ of $p_{a}$ is isomorphic to $K[x]/(p_{a})$.
For $a,b\in U$, we write $a\approx b$ if $K[x]/(p_{a})$ is isomorphic over $K$ to $K[x]/(p_{b})$.
So $K$ has finitely many separable extensions of degree $n$ if and only if there are only finitely many $\approx$-classes.

\begin{remark} 
\label{remark:eq-def}
The equivalence relation $\approx$ on $U$ is definable in $K$.
\end{remark}

\begin{proof} 
The field $K[x]/(p_{a})$ is uniformly interpretable in $K$ (as $a$ varies), as an $n$-dimensional vector space over $K$ (with basis $1, \alpha, \ldots , \alpha^{n-1}$ for $\alpha$ a root of $p_{a}(x)$ and the appropriate multiplication).
Now note that if $a,b \in U$ then $a \approx b$ if and only if $p_{b}$ has a root in $K[x]/(p_{a})$.
\end{proof} 

The main result we have to prove to obtain Theorem ~\ref{thm:bounded} is:

\begin{theorem}
\label{thm:krasner} 
Suppose that $a \in U$ and let $D$ be the $\approx$-class of $a$.
Then there is an EE subset $X$ of $K^n$ such that $a \in X \subseteq D$.
\end{theorem}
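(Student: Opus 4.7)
The plan is to realize the required EE set as the image of a ``characteristic polynomial'' morphism out of a Weil restriction, encoding a geometric form of Krasner's lemma. Set $L := K[x]/(p_a)$ and let $\alpha \in L$ denote the image of $x$, so that $L = K(\alpha)$ is a degree-$n$ separable extension of $K$. First I would consider the Weil restriction $R := \reslk(\Aa^1_L)$, a smooth $K$-variety with $R \cong \Aa^n_K$ and $R(K) = L$ canonically. Define a morphism $\phi \colon R \to \Aa^n_K$ by sending $\beta \in L = R(K)$ to the coefficient tuple of
$$p_\beta(x) := \det(x \cdot \mathrm{id}_L - \mu_\beta) \in K[x],$$
where $\mu_\beta \colon L \to L$ is multiplication by $\beta$. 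Since the matrix of $\mu_\beta$ in any fixed $K$-basis of $L$ depends $K$-linearly on $\beta$, this is indeed a morphism of $K$-varieties, and $\phi(\alpha) = a$ because the characteristic polynomial $p_\alpha$ coincides with the $K$-minimal polynomial of $\alpha$, which is $p_a$.

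Next I would locate the \'etale locus of $\phi$. Base-changing to $\kalg$ and using the $n$ distinct $K$-embeddings $\sigma_1,\ldots,\sigma_n \colon L \to \kalg$, we identify $R_{\kalg} \cong \Aa^n_{\kalg}$ via $\beta \mapsto (\sigma_1(\beta),\ldots,\sigma_n(\beta))$; under this identification $\phi$ becomes the $S_n$-quotient map
$$(y_1,\ldots,y_n) \ \longmapsto\ \text{coefficient vector of } \prod_{i=1}^n(x - y_i).$$
The Jacobian determinant of this map is, up to sign, the Vandermonde $\prod_{i < j}(y_i - y_j)$, so $\phi$ is \'etale exactly on the open locus where all conjugates are distinct, equivalently on the Zariski-open subvariety $V \subseteq R$ cut out by the condition that the discriminant of $p_\beta$ is nonzero. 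Since $p_a$ is separable, $\alpha \in V(K)$, and $\phi$ restricted to $V$ is an \'etale morphism to $\Aa^n_K$.

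Finally I would check that $\phi(V(K)) \subseteq D$. Given $\beta \in V(K)$, Cayley--Hamilton gives $p_\beta(\beta) = 0$, so the $K$-minimal polynomial $m_\beta$ of $\beta$ divides $p_\beta$, and standard theory yields $p_\beta = m_\beta^{[L:K(\beta)]}$. Because $p_\beta$ has pairwise distinct roots in $\kalg$, this forces $[L:K(\beta)] = 1$, so $K(\beta) = L$ and $p_\beta = m_\beta$ is both irreducible and separable. Thus $\phi(\beta) \in U$ and $K[x]/(p_{\phi(\beta)}) \cong K(\beta) = L \cong K[x]/(p_a)$, so $\phi(\beta) \approx a$. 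Setting $X := \phi(V(K))$ produces an EE subset of $K^n$ with $a \in X \subseteq D$.

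The step I expect to be the most delicate is the \'etaleness verification, but it reduces to the classical observation that the Jacobian of the elementary symmetric polynomials is a Vandermonde determinant; the remainder is bookkeeping with Weil restriction and the relation between characteristic and minimal polynomials on a simple field extension.
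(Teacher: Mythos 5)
Your construction is correct and is essentially the paper's own argument in coordinate-free clothing: your characteristic-polynomial morphism $\phi$ on the Weil restriction $\reslk(\Aa^1_L)$ is, after choosing the basis $1,\alpha,\ldots,\alpha^{n-1}$ of $L$ over $K$, exactly the map $G$ that the paper builds from the elementary symmetric functions of the conjugates $\beta_1(b),\ldots,\beta_n(b)$, and your \'etaleness check via the Vandermonde Jacobian of the elementary symmetric polynomials after base change to $\kalg$ is the same computation as the paper's claim that $\jac_G$ is invertible. The only (harmless) differences are that you identify the full \'etale locus as the discriminant-nonzero locus rather than verifying invertibility at the single point $(0,1,0,\ldots,0)$, and that you use Cayley--Hamilton to see that separability of $p_\beta$ forces $K(\beta)=L$, where the paper simply defines its open set $V$ by that generation condition.
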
 

Equivalently: every $\approx$-class is \'etale open.
We now work towards the proof of Theorem ~\ref{thm:krasner}.

\veno
Fix $a\in U$, and let $\alpha \in K^{\mathrm{sep}}$ be a root of $p_{a}(x)$.
Let $\overline{x} = (x_0,\ldots,x_{n - 1})$ be a tuple of variables and let $\beta(\overline{x}) = x_0 + \alpha x_1 + \ldots + x_{n - 1} \alpha^{n - 1}$.
Let $\alpha = \alpha_{1},\ldots,\alpha_{n}$, be the $K$-conjugates of $\alpha$, namely the roots of $p_{a}(x)$ (which are distinct).  
We write $\beta_{i}(\overline{x})$ for
$x_{0} + x_{1}\alpha_{i} + \cdots + x_{n-1}\alpha_{i}^{n-1}$.
So, for $b \in K^{n}$, $\beta_{1}(b),\ldots,\beta_{n}(b)$ are the $K$-conjugates of $\beta(b)$. 

\veno
Let $V$ be the set of $b = (b_{0}, b_{1}, \ldots, b_{n-1})\in K^{n}$ such $K(\beta(b)) = K(\alpha)$.
Note that $b \in V$ if and only if $\beta(b)$ is a root of $p_{c}(x)$ for some (in fact unique) $c\in U$ such that $c \approx a$.
Note further that $b \in V$ if and only if $1,\beta(b), \ldots, \beta(b)^{n-1}$ are linearly independent over $K$, hence $V$ is a Zariski open subset of $K^{n}$.

\veno
Let $e_1,\ldots,e_n \in \Zz\left[ \overline{x} \right]$ be the elementary symmetric polynomials in $n$ variables, i.e.
$$ e_k(\overline{x}) = \sum_{1 \le i_1 < i_2 < \ldots < i_k \le n} x_{i_1} \cdots x_{i_k}. $$

Given $b = (b_{0},\ldots,b_{n-1}) \in K^{n}$ we let $$G(b) = (-e_{1}(\beta_{1}(b),\ldots,\beta_{n}(b)), e_{2}(\beta_{1}(b),\ldots,\beta_{n}(b)),\ldots, (-1)^{n} e_{n}(\beta_{1}(b),\ldots,\beta_{n}(b))).$$


\begin{claim}
\label{claim:symmtric}
There are $G_1,\ldots,G_n \in K[\overline{x}]$ such that $G(b) = (G_1(b),\ldots,G_n(b))$ for all $b \in K^n$, and if $b \in V$ then $G(b) \approx a$.
\end{claim}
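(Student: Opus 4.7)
The plan is to prove both parts of the claim by exploiting the symmetric function structure of the coordinates of $G(b)$.

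First I would observe that each coordinate of $G(b)$ is, up to sign, $e_k\bigl(\beta_1(b),\ldots,\beta_n(b)\bigr)$, and that when expanded this is a symmetric polynomial in $\alpha_1,\ldots,\alpha_n$ whose coefficients are polynomials in $\overline{x} = (x_0,\ldots,x_{n-1})$ with integer coefficients. By the fundamental theorem of symmetric polynomials, each such expression can be rewritten as a polynomial with integer coefficients in $\overline{x}$ and in the elementary symmetric polynomials of $\alpha_1,\ldots,\alpha_n$. But since $\alpha_1,\ldots,\alpha_n$ are the roots of $p_a(x) = x^n + a_{n-1}x^{n-1} + \cdots + a_0$, the elementary symmetric polynomials $e_j(\alpha_1,\ldots,\alpha_n) = (-1)^j a_{n-j}$ are elements of $K$. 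Therefore each $(-1)^k e_k(\beta_1(\overline{x}),\ldots,\beta_n(\overline{x}))$ is an element $G_k$ of $K[\overline{x}]$, which gives the desired description $G(b) = (G_1(b),\ldots,G_n(b))$.

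For the second assertion, I would fix $b \in V$ and put $c = G(b)$. By definition of $V$, the element $\beta(b) = \beta_1(b)$ satisfies $K(\beta(b)) = K(\alpha)$, so $\beta(b)$ has degree $n$ over $K$ and its $n$ distinct $K$-conjugates are precisely $\beta_1(b),\ldots,\beta_n(b)$ (apply the $K$-embeddings of $K(\alpha)$ sending $\alpha \mapsto \alpha_i$). Hence the minimal polynomial of $\beta(b)$ over $K$ is
\[
\prod_{i=1}^{n}\bigl(x - \beta_i(b)\bigr) = x^n - e_1\bigl(\beta_1(b),\ldots,\beta_n(b)\bigr)x^{n-1} + \cdots + (-1)^n e_n\bigl(\beta_1(b),\ldots,\beta_n(b)\bigr),
\]
which by definition of $G$ is exactly $p_c(x)$. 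In particular $c \in U$, and $K[x]/(p_c) \cong K(\beta(b)) = K(\alpha) \cong K[x]/(p_a)$ as $K$-algebras, so $G(b) = c \approx a$.

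The main obstacle is really just bookkeeping: writing the symmetric-function argument carefully enough to guarantee that the polynomials $G_k$ have coefficients in $K$ (not merely in $K(\alpha)$), and checking that on $V$ the tuple $G(b)$ lies in $U$ so that the equivalence $\approx$ is actually defined. Both steps are standard once one identifies $G_k(b)$ with the $k$th coefficient of the minimal polynomial of $\beta(b)$.
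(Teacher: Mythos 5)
Your proof is correct and follows the same route the paper takes: the paper's own justification is precisely that the coordinates of $G$ are symmetric in $\alpha_1,\ldots,\alpha_n$ (hence lie in $K[\overline{x}]$ via the elementary symmetric polynomials of the roots of $p_a$) and that $p_{G(b)}$ is the monic polynomial with roots $\beta_1(b),\ldots,\beta_n(b)$, which for $b \in V$ is the minimal polynomial of a generator of $K(\alpha)$. You have simply filled in the details the paper leaves implicit.
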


The first claim of Claim~\ref{claim:symmtric} follows as $G$ is symmetric in $\alpha_{1},\ldots,\alpha_{n}$.
The second claim follows as $p_{G(b)}$ is the monic polynomial with roots $\beta_1(b),\ldots,\beta_n(b)$.
Claim~\ref{claim:jaco} below is crucial.




\begin{claim}
\label{claim:jaco}
$G(0,1,0, ..., 0) = a$ and the Jacobian of $G$ at $(0,1,0,..,0)$ is invertible.
\end{claim}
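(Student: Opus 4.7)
The plan is to verify both assertions by direct computation, factoring $G$ through the standard ``roots-to-coefficients'' map.

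First, $G(0,1,0,\ldots,0) = a$ is immediate: at $b_{0} = (0,1,0,\ldots,0)$ we have $\beta_{i}(b_{0}) = \alpha_{i}$ for each $i$, so the monic polynomial with roots $\beta_{1}(b_{0}),\ldots,\beta_{n}(b_{0})$ is exactly $p_{a}$ (whose roots are the $\alpha_{i}$). The second assertion of Claim~\ref{claim:symmtric} then forces $G(b_{0}) = a$, once one matches the coefficient-indexing conventions in the definitions of $p_{a}$ and $G$.

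For the Jacobian, I would pass to a splitting field $L$ of $p_{a}$ over $K$ and factor $G = S \circ B$ as $L$-morphisms, where
\[
B : \Aa^{n} \to \Aa^{n}, \quad b \mapsto (\beta_{1}(b),\ldots,\beta_{n}(b)),
\]
is linear with matrix the Vandermonde $(\alpha_{i}^{j-1})_{i,j}$, and
\[
S : \Aa^{n} \to \Aa^{n}, \quad (y_{1},\ldots,y_{n}) \mapsto (-e_{1}(y),\, e_{2}(y),\, \ldots,\, (-1)^{n}e_{n}(y))
\]
is the map sending an $n$-tuple to the coefficients of the monic polynomial it defines. Even though $G$ itself is defined over $K$, the factorization makes sense after base change to $L$, and equality $G = S \circ B$ holds by the very definition of $G$.

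Now $dB$ is the Vandermonde matrix, with determinant $\prod_{i<j}(\alpha_{j} - \alpha_{i})$, nonzero because $p_{a}$ is separable. For $dS$ at the point $B(b_{0}) = (\alpha_{1},\ldots,\alpha_{n})$ I would invoke the classical fact that $S$ is \'etale on the complement of the big diagonal $\{y_{i} = y_{j} : i \neq j\}$; this can be verified directly via $\partial e_{k}/\partial y_{j} = e_{k-1}(y_{1},\ldots,\widehat{y_{j}},\ldots,y_{n})$, and after row operations the Jacobian determinant of $S$ at $(\alpha_{1},\ldots,\alpha_{n})$ reduces again to a power of $\prod_{i<j}(\alpha_{j} - \alpha_{i})$ up to sign, hence is nonzero. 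By the chain rule $dG_{b_{0}} = dS_{(\alpha_{1},\ldots,\alpha_{n})} \circ dB_{b_{0}}$ is a product of two invertible matrices and so is invertible.

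The main obstacle is essentially bookkeeping: the factorization $G = S \circ B$ is only defined over $L$, but the Jacobian determinant of $G$ is a polynomial with coefficients in $K$ by the first part of Claim~\ref{claim:symmtric}, so its non-vanishing at the $K$-point $b_{0}$ is intrinsic and transfers down from $L$ to $K$ automatically. Once the factorization is set up cleanly, everything reduces to two Vandermonde-style nonvanishing statements powered by the separability of $p_{a}$.
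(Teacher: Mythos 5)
Your proof is correct and follows essentially the same route as the paper: both factor $G$ through the linear Vandermonde map $b \mapsto (\beta_1(b),\ldots,\beta_n(b))$ and the roots-to-coefficients map, and apply the chain rule together with separability of $p_a$. The only cosmetic differences are that the paper splits your $S$ into a sign-change $D$ composed with the elementary symmetric map $E$, and cites the literature for $|\jac_E| = \prod_{i<j}(\alpha_i - \alpha_j)$ where you verify it directly.
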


Given a polynomial function $f \colon K^n \to K^n$ we let $\jac_f(a)$ be the Jacobian of $f$ and $|\jac_f(a)|$ be the Jacobian determinant of $f$ at $a \in K^n$.

\begin{proof}
It is clear that $G(0,1,0,..,0) = a$ and  $(0,1,0,..,0)\in V$.
Let $L = K(\alpha)$.
To show that the Jacobian of $G$ at $(0,1,0,..,0)$ is invertible we first produce maps $D,E,F \colon L^n \to L^n$ such that $G$ agrees with the restriction of $D \circ E \circ F$ to $V$.
We define $F \colon L^n \to L^n$ by
$$ F(b_0,\ldots,b_{n - 1}) = (b_0 + b_1 \alpha_1 + \cdots + b_{n - 1}\alpha_1^{n - 1},\ldots, b_0 + b_1 \alpha_n + \cdots + b_{n - 1}\alpha_n^{n - 1}), $$


$E \colon L^n \to L^n$ is given by
$$ E(b) = (e_1(b),\ldots,e_n(b)), $$
and  $D \colon L^n \to L^n$ is given by 
$$ D(b_0,\ldots,b_{n - 1}) = (-b_0,b_1,-b_2,\ldots,(-1)^n b_{n - 1}). $$
So if $b \in V$ then $G(b) = (D \circ E \circ F)(b)$.
Note that $F$ and $D$ are linear, so $\jac_F$ and $\jac_D$ are constant.
Applying the chain rule we have
\begin{align*}
\jac_G(0,1,0,\ldots,0) &= \jac_D \jac_E(F(0,1,0,\ldots,0)) \jac_F \\ &= \jac_D \jac_E(\alpha_1,\ldots,\alpha_n) \jac_F.
\end{align*}
It is clear that $|\jac_D| \in \{-1,1\}$.
Furthermore $\jac_F$ is a Vandermonde matrix
\begin{equation*}
\begin{pmatrix}
1 & \alpha_1 & \alpha^2_1 & \ldots & \alpha^{n-1}_1 \\
1 & \alpha_2 & \alpha^2_2 & \ldots & \alpha^{n-1}_2 \\
1 & \alpha_3 & \alpha^2_3 & \ldots & \alpha^{n-1}_3 \\
\vdots & \vdots & \vdots & \ldots & \vdots \\
1 & \alpha_n & \alpha^2_n & \ldots & \alpha^{n-1}_n \\
\end{pmatrix}.
\end{equation*}
So $\jac_F$ is invertible as $\alpha_1,\ldots,\alpha_n$ are distinct.
Finally by  \cite{jacobian-poly}
$$ |\jac_E(\alpha_1,\ldots,\alpha_n)| = \prod_{1 \le i < j \le n} (\alpha_i - \alpha_j). $$
This is non-zero as the $\alpha_i$ are distinct, so $\jac_E(\alpha_1,\ldots,\alpha_n)$ is invertible.
\end{proof}

We now deduce Theorem ~\ref{thm:krasner}.
Let $O$ be the open subvariety of $\Aa^n$ given by $|\jac_G(\overline{x})| \ne 0$.
So $G$ gives an \'etale morphism $O \to \Aa^n$.
Then $O(K) \cap V$ is a Zariski open subset of $K^n$, which is nonempty by Claim 4.4. 
Let $W$ be an open subvariety of $\Aa^n$ such that $W(K) = O(K) \cap V$.
The restriction of $G$ to $W$ is an \'etale morphism $W \to \Aa^n$.
Let $X = G(W(K))$.
So $X$ is a nonempty EE subset of $K^{n}$ contained in the $\approx$-class of $a$. As $a$ was an arbitrary member of $U$, this concludes the proof of 
 Theorem ~\ref{thm:krasner}.

\vspace{5mm}
\noindent
Finally we can complete  the proof of Theorem~\ref{thm:bounded}.

\begin{proof}
Let $T$ be a simple theory, $M$ be a model of $T$, and $K$ be an infinite field definable in $M$.
As remarked at the beginning of this section, it suffices to fix $n$ and show that $K$ has only finitely many separable extensions of degree $n$,  and thus that the definable equivalence relation $\approx$ on the definable set $U\subset K^{n}$  has only finitely many classes. After possibly passing to an elementary extension we may suppose that $M$ is highly saturated.
By Theorem ~\ref{thm:krasner} and  Proposition~\ref{prop:generic-open}, every $\approx$-class is $f$-generic for $(K^{n},+)$.
By Lemma~\ref{lem:equiv-rel} there are only finitely many $\approx$-classes. 
\end{proof}

\subsection{Another proof that large stable fields are separably closed}
\label{section:another-proof}
We give a proof that large stable fields are separably closed that avoids Macintyre's Galois theoretic argument.
We first prove Lemma~\ref{lem:separable-ee}.
We continue to use the notation of the previous section.

\begin{lemma}
\label{lem:separable-ee}
Let $Y$ be the set of $a \in K^n$ such that $p_a$ has $n$ distinct roots in $K$.
Then $Y$ is an EE subset of $K^n$.
\end{lemma}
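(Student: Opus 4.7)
The plan is to exhibit $Y$ as the image of the "roots to coefficients" map on the open locus of tuples with pairwise distinct coordinates, and to verify that this map is étale there.

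More precisely, I would define a morphism $\Phi : \Aa^n \to \Aa^n$ by sending $(r_1,\ldots,r_n)$ to the coefficients of $\prod_{i=1}^n (x - r_i)$, that is
$$ \Phi(r_1,\ldots,r_n) = \bigl((-1)^n e_n(r_1,\ldots,r_n),\, (-1)^{n-1} e_{n-1}(r_1,\ldots,r_n),\ldots,\, -e_1(r_1,\ldots,r_n)\bigr), $$
listed in the same order $(a_0,\ldots,a_{n-1})$ that the paper uses to parametrize $p_a$. Let $W \subseteq \Aa^n$ be the open subvariety defined by $\prod_{i<j}(x_i - x_j) \ne 0$; this is a smooth irreducible $K$-variety. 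By definition of $Y$ and $p_a$, a point $a \in K^n$ lies in $Y$ if and only if $p_a(x)$ factors as a product of $n$ distinct linear factors over $K$, which happens if and only if $a = \Phi(r_1,\ldots,r_n)$ for some $(r_1,\ldots,r_n) \in W(K)$. Thus $Y = \Phi(W(K))$.

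It then remains to check that $\Phi|_W : W \to \Aa^n$ is étale, which since $W$ and $\Aa^n$ are smooth of the same dimension reduces to checking that the Jacobian determinant of $\Phi$ is nonvanishing on $W$. Writing $\Phi = D \circ E$, where $E(r_1,\ldots,r_n) = (e_1,\ldots,e_n)$ and $D$ is the linear sign-flip-and-reverse map $(y_1,\ldots,y_n) \mapsto ((-1)^n y_n,\ldots,-y_1)$, we have $|\jac_D|$ a nonzero constant, and by the computation already invoked in the proof of Claim~\ref{claim:jaco} (using the reference to \cite{jacobian-poly})
$$ |\jac_E(r_1,\ldots,r_n)| = \prod_{1 \le i < j \le n}(r_i - r_j), $$
which is nonzero precisely on $W$. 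Hence $\Phi|_W$ is étale and $Y = \Phi(W(K))$ is an EE subset of $K^n$.

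No step here looks genuinely difficult; the only thing to be careful about is matching the indexing convention for the coefficients of $p_a$ and keeping track of the sign conventions when passing from elementary symmetric functions to coefficients, so that the image really is $Y$ and not a signed reflection of it. The Jacobian computation is essentially a specialization of the one already carried out in Claim~\ref{claim:jaco}, so it requires no new ingredient.
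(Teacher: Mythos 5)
Your proof is correct and is essentially the paper's own argument: both exhibit $Y$ as the image under the roots-to-coefficients map of the open locus $\prod_{i<j}(x_i-x_j)\ne 0$, and both verify \'etaleness by the same Jacobian computation from \cite{jacobian-poly}. Your extra care with the coefficient-indexing and sign conventions (via the factorization through $D$ and $E$) is a harmless elaboration of what the paper states directly.
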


\begin{proof}
Let $V$ be the open subvariety of $\Aa^n$ given by $x_i \ne x_j$ for all $1 \le i < j \le n$.
Let $H \colon K^n \to K^n$ be given by $H(b) = (-e_1(b),e_2(b),\ldots,(-1)^n e_n(b))$.
So $p_{H(a)}$ is the polynomial with roots $a_1,\ldots,a_n$ for any $a = (a_1,\ldots,a_n) \in V(K)$.
It follows from \cite{jacobian-poly} that $|\jac_H(a)|$ agrees up to sign with $\prod_{1 \le i < j \le n} (a_i - a_j)$ for any $a = (a_1,\ldots,a_n) \in K^n$.
So $\jac_H(a)$ is invertible for all $a \in V(K)$.
Thus $H(V(K))$ is an EE subset of $K^n$.
\end{proof}

We now show that a large stable field is separably closed.

\begin{proof}
Suppose that $K$ is large and not separably closed.
Fact~\ref{fact:generic-basic} and Lemma~\ref{fact:generic-product} together show that if $K$ is stable then for each $n \ge 1$ there is a unique $n$-ary type over $K$ which is generic for $(K^n,+)$.
It follows by Proposition~\ref{prop:generic-open} that if $K$ is stable then any two nonempty EE subsets of $K^n$ have nonempty intersection.
As $K$ is not separably closed there is a separable, irreducible, and non-constant $p \in K[x]$.
Suppose that $p$ is monic and fix $a \in K^n$ such that $p = p_a$.
By Theorem~\ref{thm:krasner} there is an EE subset $X$ of $K^n$ such that $a \in X$ and $p_b$ is separable and irreducible for any $b \in X$.
Let $Y$ be the set of $b \in K^n$ such that $p_b$ has $n$ distinct roots in $K$, by Lemma~\ref{lem:separable-ee} $Y$ is an EE subset of $K^n$.
So $X,Y$ are disjoint nonempty EE subsets of $K^n$, hence $K$ is unstable.
\end{proof}

The proof above easily adapts to show that an infinite superstable field is algebraically closed.
We describe this proof, assuming some familiarity with superstability.
We let $\dim_U Z$ be the $U$-rank of a definable set $Z$.
Suppose that $K$ is infinite and superstable.
A superstable field is perfect, so it suffices to show that $K$ is separably closed.
Suppose otherwise and fix $n$ such that there is a nonconstant separable irreducible $p \in K[x]$.
Let $X,Y$ be as in the proof above.
Note that both $X$ and $Y$ contain a set of the form $f(W(K))$ where $W$ is a dense open subvariety of $\Aa^n$ and $f \colon W \to \Aa^n$ is \'etale.
So $\dim_U W(K) = \dim_U K^n$ and the induced map $W(K) \to K^n$ has finite fibers as $f$ is \'etale.
Hence $\dim_U X = \dim_U K^n = \dim_U Y$.
So $X,Y$ are both $f$-generic in $(K^n,+)$, which contradicts uniqueness of generic types.

\subsection{Topological corollaries}
\noindent
Suppose that $v$ is a non-trivial Henselian valuation on $K$.
It follows from the classical Krasner's lemma that each $\approx$-class is open in the $v$-adic topology on $K^n$.
See for example \cite[3.5.13.2]{poonen-qpoints} for a treatment of the case when $K$ is a local field which easily generalizes to the Henselian case.
It is shown in \cite{JTWY} that if $K$ is not separably closed then the $v$-adic topology on each $K^n$ agrees with the \'etale open topology.
So Corollary~\ref{cor:krasner} generalizes this consequence of Krasner's lemma.

\begin{corollary}
\label{cor:krasner}
Fix $a \in K^n$ such that $p_a$ is separable and irreducible.
Then the set of $b \in K^n$ such that $K[x]/(p_b)$ is $K$-algebra isomorphic to $K[x]/(p_a)$ is an \'etale open neighbourhood of $a$.
So the set of $a \in K^n$ such that $p_a$ is separable and irreducible is \'etale open.
\end{corollary}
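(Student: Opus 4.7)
The plan is to derive Corollary~\ref{cor:krasner} directly from Theorem~\ref{thm:krasner}, using only the fact (recalled in the discussion preceding Corollary~\ref{cor:intersection}) that the EE subsets of $K^n$ form a basis for the \'etale open topology on $K^n$.

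For the first assertion, I would begin by observing that the set $D$ in the statement is precisely the $\approx$-class of $a$ inside $U$. Indeed, if $K[x]/(p_b) \cong_K K[x]/(p_a)$ then the left-hand side is isomorphic to a separable field extension of $K$, which forces $p_b$ to be irreducible and separable, so $b \in U$; conversely, for $b \in U$ the relation $K[x]/(p_b) \cong_K K[x]/(p_a)$ is exactly the definition of $a \approx b$. Now Theorem~\ref{thm:krasner} applies at every point $b \in D$ (since $D$ is also the $\approx$-class of $b$), producing an EE subset $X_b \subseteq K^n$ with $b \in X_b \subseteq D$. Therefore $D = \bigcup_{b \in D} X_b$ is a union of basic \'etale opens, hence an \'etale open neighbourhood of $a$, as required.

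For the second assertion, the set $U$ of those $a \in K^n$ with $p_a$ separable and irreducible decomposes as the disjoint union of its $\approx$-classes, each of which is \'etale open by the first assertion. Hence $U$ is a union of \'etale open sets and so is itself \'etale open. There is no real obstacle in this argument: all of the substantive work has already been done in Theorem~\ref{thm:krasner}, and the corollary is simply the observation that a subset of $K^n$ which contains an EE neighbourhood of each of its points is \'etale open, applied first to the individual $\approx$-classes and then to their union $U$.
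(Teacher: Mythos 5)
Your proposal is correct and matches the paper's (essentially implicit) argument: the paper states right after Theorem~\ref{thm:krasner} that it is equivalent to every $\approx$-class being \'etale open, and the corollary follows exactly as you say, since EE sets form a basis for the \'etale open topology. Your observation that a $K$-algebra isomorphism $K[x]/(p_b)\cong_K K[x]/(p_a)$ forces $b\in U$ is the only point needing any care, and you handle it correctly.
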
 

Fact~\ref{fact:HD} is proven in \cite{JTWY} by an application of Macintyre's Galois-theoretical argument.

\begin{fact}
\label{fact:HD}
If $K$ is not separably closed then the \'etale open topology on $K$ is Hausdorff.
\end{fact}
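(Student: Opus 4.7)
The plan is to produce a single nonempty proper EE subset $X \subseteq K$ with $0 \notin X$, and then exploit the additive and multiplicative translates of $X$ (Corollary~\ref{cor:translate}) to separate $0$ from any prescribed nonzero element. Since translation is a homeomorphism in the \'etale open topology, this $T_1$-style separation upgrades to genuine Hausdorffness once one invokes the topological-group structure of $(K,+)$ established in \cite{JTWY}.

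First I would construct $X$. Since $K$ is not separably closed, fix a separable irreducible polynomial $p \in K[x]$ of degree $n \ge 2$. The morphism $\Aa^1 \to \Aa^1$, $y \mapsto p(y)$, has derivative $p'$, a nonzero polynomial, so it is \'etale on the open subvariety $W \subseteq \Aa^1$ defined by $p'(y) \ne 0$. Hence
$X := \{p(y) : y \in K,\ p'(y) \ne 0\}$
is an EE subset of $K$. It is nonempty because $K$ is infinite while $p'$ has at most $n-1$ roots, and $0 \notin X$ because $p$, being irreducible of degree $\ge 2$, has no root in $K$. In particular $X \subsetneq K$.

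Next I claim that for every $c \in K^*$ there is an EE subset of $K$ containing $0$ but not $c$. Given $c \ne 0$, I would choose $d \in K^*$ and $y_0 \in X$ with $y_0 + d \notin X$. Such a pair must exist: otherwise $X + K^* \subseteq X$, and fixing any $x_0 \in X$ gives $K \setminus \{x_0\} = x_0 + K^* \subseteq X$, forcing $0 \in X$, a contradiction. Setting $b := c/d \in K^*$, Corollary~\ref{cor:translate} shows that $V := bX - by_0$ is an EE subset of $K$ which obviously contains $0$; and $c \in V$ is equivalent to $d + y_0 \in X$, which fails by the choice of $d, y_0$. So $c \notin V$.

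The main obstacle is passing from this separation statement to genuine Hausdorffness. For this I would appeal to \cite{JTWY}, where it is shown that the \'etale open topology makes $(K,+)$ into a topological group: combined with joint continuity of addition, the neighbourhood $V \ni 0$ above can be shrunk to a symmetric EE neighbourhood $V' \ni 0$ with $V' - V'$ still avoiding $c$, and then $x + V'$ and $y + V'$ are disjoint open sets separating any pair $x \ne y$ with $x - y = c$. A fully self-contained treatment would require verifying joint continuity of addition in the \'etale open topology, which amounts to checking that the EE subset $\{(a,b) \in K^2 : a + b \in X\}$ of $K^2$, obtained by base-change of $X$ along $+ : \Aa^2 \to \Aa^1$, is a neighbourhood in the product \'etale open topology of each of its points; this is where I expect the real technical work to lie.
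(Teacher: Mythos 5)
Your Steps 1 and 2 are correct and self-contained: the set $X=\{p(y): y\in K,\ p'(y)\ne 0\}$ is indeed a nonempty EE subset of $K$ omitting $0$, and your translation argument produces, for each $c\in K^*$, an EE set containing $0$ but not $c$. Together with the fact that affine maps are homeomorphisms, this shows the \'etale open topology on $K$ is $T_1$. But $T_1$ is not Hausdorff, and the step you defer to \cite{JTWY} --- that $(K,+)$ with the \'etale open topology is a topological group --- is exactly where the proof breaks down. That statement is \emph{false} in general: for separably closed $K$ the \'etale open topology is the Zariski topology, which is $T_1$ but for which addition is not jointly continuous with respect to the product topology (if $U,V\subseteq K$ are cofinite then $U+V=K$, so no product-basic neighbourhood of a point maps into $K\setminus\{0\}$). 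So joint continuity of addition is not a formal consequence of functoriality; it genuinely requires that $K$ not be separably closed and is a substantive theorem in its own right. Your proposed verification does not work as stated: $\{(a,b): a+b\in X\}$ is \'etale open in $\Aa^2(K)$ by functoriality, but the \'etale open topology on $\Aa^2(K)$ refines, and in general properly refines, the product topology, so this does not give product-openness. You have correctly located where the real work lies, but you have not done it, and citing \cite{JTWY} does not discharge it (and risks circularity, since establishing the field-topology property for non--separably-closed fields is at least as hard as the Hausdorffness you are trying to prove).

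The paper's proof sidesteps all of this with a different reduction: since the affine group $x\mapsto ax+b$ acts $2$-transitively on $K$ by homeomorphisms, it suffices to exhibit \emph{two disjoint} nonempty \'etale open subsets of $K$, not merely a one-sided separation of $0$ from each $c$. These are obtained by taking the two disjoint nonempty EE subsets $X,Y\subseteq K^n$ from Section~\ref{section:another-proof} (coefficient tuples of separable irreducible polynomials near a fixed one, versus coefficient tuples of polynomials with $n$ distinct roots in $K$) and pulling them back along the line $t\mapsto (1-t)p+tq$, using only the continuity of morphism-induced maps. To salvage your approach you would need either to upgrade your construction to produce two disjoint nonempty EE subsets of $K$ directly, or to prove the topological-group statement for non--separably-closed $K$ from scratch; the former is what the paper does.
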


If $K$ is separably closed then the \'etale open topology agrees with the Zariski topology on $V(K)$ for any $K$-variety $V$, equivalently every EE subset of $V(K)$ is Zariski open.
We give a proof of Fact~\ref{fact:HD} which avoids Galois theory.
We apply the fact that if $V \to W$ is a morphism between $K$-varieties then the induced map $V(K) \to W(K)$ is \'etale open continuous.

\begin{proof}
Equip $K$ with the \'etale open topology.
Any affine transformation $x \mapsto ax + b$, $a \in K^*,b \in K$ gives a homeomorphism $K \to K$.
Thus it is enough to produce two disjoint nonempty \'etale open subsets of $K$.
The argument of Section~\ref{section:another-proof} yields two disjoint nonempty \'etale open subsets $X,Y$ of $K^n$.
Fix $p \in X$ and $q \in Y$ and let $f \colon K \to K^n$ be given by $f(t) = (1 - t)p + tq$.
Then $f$ is a continuous map between \'etale open topologies so $f^{-1}(X), f^{-1}(Y)$ are disjoint nonempty \'etale open subsets of $K$.
\end{proof}

Finally, we characterize bounded $\mathrm{PAC}$ fields amongst $\mathrm{PAC}$ fields.

\begin{corollary}
\label{cor:bounded-char}
Suppose that $K$ is $\mathrm{PAC}$, equip each $K^n$ with the \'etale open topology.
Then $K$ is bounded if and only if any definable equivalence relation on $K^n$ has only finitely many classes with interior.
\end{corollary}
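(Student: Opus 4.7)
The plan is to prove the two directions separately. Both use that PAC fields are large (so the étale open topology, Theorem~\ref{thm:krasner} and Proposition~\ref{prop:generic-open} apply); the forward direction additionally uses that bounded PAC fields are simple, so that $f$-genericity is available.

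For the ($\Rightarrow$) direction, I assume $K$ is bounded PAC, hence simple. Let $E$ be a definable equivalence relation on $K^n$, and let $C$ be a class of $E$ with nonempty étale open interior. Since the EE subsets of $K^n$ form a basis for the étale open topology, $C$ contains a nonempty EE set, which by Proposition~\ref{prop:generic-open} is $f$-generic in $(K^n,+)$. Now $f$-genericity is preserved by passage to definable supersets: using Fact~\ref{fact:generic}(5), an $f$-generic complete type witnessing $f$-genericity of the smaller set also witnesses it for the larger one. Hence $C$ is itself $f$-generic. The proof of Lemma~\ref{lem:equiv-rel} adapts essentially verbatim to bound the number of $f$-generic classes: if there were infinitely many, in a saturated enough model one of them would have canonical parameter outside the algebraic closure of the (finite) parameter set defining $E$, and the standard indiscernible-sequence argument would force its defining formula to divide over $\emptyset$, contradicting $f$-genericity.

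For the ($\Leftarrow$) direction, I assume the hypothesis on classes with interior and fix $n$. Using the notation of the proof of Theorem~\ref{thm:bounded}, let $U \subseteq K^n$ be the definable set of $a$ with $p_a$ separable and irreducible, and let $\approx$ denote the $K$-algebra isomorphism equivalence relation on $U$ (definable by Remark~\ref{remark:eq-def}). Extend $\approx$ to all of $K^n$ by adjoining $K^n \setminus U$ as a single additional class; this gives a definable equivalence relation on $K^n$. Theorem~\ref{thm:krasner} shows that every $\approx$-class in $U$ contains a nonempty EE set around any chosen point, and therefore has nonempty étale open interior. The hypothesis then forces only finitely many $\approx$-classes in $U$, i.e., only finitely many separable extensions of $K$ of degree $n$ up to $K$-algebra isomorphism, and Fact~\ref{fact:reduce} concludes that $K$ is bounded.

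The one delicate point is the adaptation of Lemma~\ref{lem:equiv-rel} to bound the number of $f$-generic classes rather than all classes of a definable equivalence relation; this is a routine modification, needing only that distinct classes have distinct canonical parameters and that a saturated enough model provides one outside the algebraic closure of the relevant parameters. Everything else is a straight application of the topological and model-theoretic machinery already developed in the paper.
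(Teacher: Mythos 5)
Your proof is correct and follows essentially the same route as the paper: the forward direction passes from a class with interior to a nonempty EE set, invokes Proposition~\ref{prop:generic-open} and the proof (not the statement) of Lemma~\ref{lem:equiv-rel} to bound the $f$-generic classes, and the reverse direction runs Theorem~\ref{thm:krasner} on the relation $\approx$ from the proof of Theorem~\ref{thm:bounded}. Your extra care in extending $\approx$ from $U$ to all of $K^n$ and in justifying that $f$-genericity passes to definable supersets are points the paper leaves implicit, but they are not a different argument.
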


Note that Corollary~\ref{cor:bounded-char} fails when ``$\mathrm{PAC}$" is replaced by ``large".
For example $\mathbb{Q}_p$ is bounded, the \'etale open topology on $\mathbb{Q}_p$ agrees with the $p$-adic topology, and the equivalence relation where $E(a,b)$ if and only if $a,b \in \mathbb{Q}_p$ have the same $p$-adic valuation is definable and has infinitely many open classes.

\begin{proof}
Suppose that $K$ is not bounded.
Fix $n$ such that $K$ has infinitely many separable extensions of degree $n$.
Let $U$ and $\approx$ be as in the proof of Theorem~\ref{thm:bounded}.
Then each $\approx$-class is open and there are infinitely many $\approx$-classes.
Now suppose that $K$ is bounded and $E$ is a definable equivalence relation on $K^n$.
Note that $K$ is simple.
By Proposition~\ref{prop:generic-open} any $E$-class with interior is $f$-generic.
The proof of Lemma~\ref{lem:equiv-rel} shows that there are only finitely many $f$-generic $E$-classes.
\end{proof}

\section{Additional remarks and results}
We will discuss a few related topics and results, and prove Theorem~\ref{thm:projective}.
If $\Chara(K) = p > 0$ then we let $\wp \colon K \to K$ be the \textbf{Artin-Schreier map} $\wp(x) = x^p - x$.
This map is an additive homomorphism, so $\wp(K)$ is a subgroup of $(K,+)$.
In this section we let $P_n = \{ a^n : a \in K^*\}$ for each $n$.
Some of our proofs below could be simplified by apply Scanlon's theorem \cite{Kaplan2011} that an infinite stable field is Artin-Schreier closed, but we will avoid this.

\subsection{Boundedness and large stable fields}
\label{section:stable}
It is a theorem of Poizat that an infinite bounded stable field is separably closed.
Poizat's result and Theorem~\ref{thm:bounded} together show that large stable fields are separably closed.
Poizat's result is mentioned somewhat informally at the bottom of p. 347 in \cite{Poizat-generic} and does not appear to be well-known, so we take the opportunity to clarify the matter.
Fact~\ref{fact:poizat-lem} is \cite[Lemma 4]{Poizat-generic}.

\begin{fact}
\label{fact:poizat-lem}
Suppose that $L$ is a finite Galois extension of $K$.
Then the following holds.
\begin{enumerate}
\item If $q \ne \Chara(K)$ is a prime then there are only finitely many cosets $H$ of $P_q$ in $(K^*,\times)$ such that some (equivalently: any) $a \in H$ has a $q$th rooth in $L$.
\item Suppose  $\Chara(K) = p > 0$.
Then there are only finitely many cosets $H$ of $\wp(K)$ in $(K,+)$ such that some (equivalently: any) $a \in H$ is of the form $b^p - b$ for some $b \in L$.
\end{enumerate}
\end{fact}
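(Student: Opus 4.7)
The plan is to recognize both claims as instances of Hilbert 90, so that in each case the quotient in question sits inside a finite first Galois cohomology group. Throughout I set $G = \Gal(L/K)$, a finite group.

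For part (1), I let $S = (L^*)^q \cap K^*$, i.e.\ the subgroup of $K^*$ consisting of those $a$ that admit a $q$-th root in $L$; this subgroup contains $P_q$, and the statement to prove is that $S/P_q$ is finite. Since $q \ne \Chara(K)$, the sequence of $G$-modules
\[ 0 \to \mu_q(L) \to L^* \xrightarrow{(-)^q} (L^*)^q \to 0 \]
is exact. The long exact sequence of $G$-cohomology, combined with $((L^*)^q)^G = S$ and Hilbert 90 ($H^1(G, L^*) = 0$), yields
\[ S / P_q \;\cong\; H^1(G, \mu_q(L)), \]
and this cohomology group is finite since both $G$ and $\mu_q(L)$ are finite.

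For part (2), with $p = \Chara(K) > 0$, I set $S = \wp(L) \cap K$, a subgroup of $(K,+)$ containing $\wp(K)$; then (2) asserts that $S/\wp(K)$ is finite. The $G$-module sequence
\[ 0 \to \Ff_p \to L \xrightarrow{\wp} \wp(L) \to 0 \]
is exact because $\ker(\wp\colon L \to L) = \Ff_p$. The long exact sequence of $G$-cohomology, together with $\wp(L)^G = S$ and the additive Hilbert 90 identity $H^1(G, L) = 0$ (a consequence of the normal basis theorem), gives
\[ S / \wp(K) \;\cong\; H^1(G, \Ff_p), \]
which is again finite.

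If a cohomology-free write-up is preferred, one can reconstruct the isomorphisms by hand: for $a \in S$ pick $b \in L$ with $b^q = a$ (resp.\ $\wp(b) = a$) and attach the $1$-cocycle $\chi_b \colon \sigma \mapsto \sigma(b)/b \in \mu_q(L)$ (resp.\ $\sigma \mapsto \sigma(b) - b \in \Ff_p$); well-definedness modulo $P_q$ (resp.\ $\wp(K)$) and injectivity at the level of cosets reduce to short direct checks. I do not anticipate any genuine obstacles --- the only nontrivial inputs are the classical vanishings $H^1(G, L^*) = 0$ and $H^1(G, L) = 0$, both of which are standard.
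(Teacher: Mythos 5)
Your proof is correct. The paper does not actually prove this statement --- it is quoted as Lemma~4 of Poizat's paper --- and your Kummer/Artin--Schreier cohomology argument is exactly the standard route: the connecting maps $S/P_q \hookrightarrow H^1(\Gal(L/K),\mu_q(L))$ and $S/\wp(K) \hookrightarrow H^1(\Gal(L/K),\Ff_p)$ land in finite groups, which is all that is needed. (One small simplification: for finiteness you only need exactness at $S$, i.e.\ that the kernel of the connecting map is $P_q$ resp.\ $\wp(K)$, so the Hilbert~90 vanishings $H^1(G,L^*)=0$ and $H^1(G,L)=0$ --- which give surjectivity onto $H^1$ --- can be dispensed with entirely.)
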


Fact~\ref{fact:bounded} follows from Fact~\ref{fact:poizat-lem}.

\begin{fact}
\label{fact:bounded}
Suppose that $K$ is bounded.
Then
\begin{enumerate}
\item if $q \ne \Chara(K)$ is prime then $P_q$ has finite index in $(K^*,\times)$, and
\item if $\Chara(K) > 0$ then $\wp(K)$ has finite index in $(K,+)$.
\end{enumerate}
\end{fact}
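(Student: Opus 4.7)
The plan is to reduce both parts to Fact~\ref{fact:poizat-lem}, whose hypothesis refers to a single fixed finite Galois extension $L/K$. Boundedness is exactly what lets us package all relevant finite extensions of $K$ into one such $L$.

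For (1), fix a prime $q \ne \Chara(K)$. For any $a \in K^* \setminus P_q$, separability of $x^q - a$ (which uses $q \ne \Chara(K)$) lets us pick a root $b \in K^{\mathrm{sep}}$; the minimal polynomial of $b$ over $K$ divides $x^q - a$, so $[K(b):K]$ divides $q$, and since $a \notin P_q$ forces $b \notin K$, in fact $[K(b):K] = q$. By boundedness there are only finitely many degree-$q$ separable extensions of $K$ inside $K^{\mathrm{sep}}$; let $L$ be their compositum. Then $L/K$ is finite, and Galois because the set of degree-$q$ subextensions is $\Aut(K^{\mathrm{sep}}/K)$-stable. By construction every coset of $P_q$ in $K^*$ (the trivial one contains $1$, each nontrivial one contains some $a$ as above) has a representative with a $q$th root in $L$, so Fact~\ref{fact:poizat-lem}(1) applied to this $L$ yields $[K^* : P_q] < \infty$.

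For (2) we argue identically with Artin--Schreier extensions in place of Kummer extensions. Assume $\Chara(K) = p > 0$. For any $a \in K \setminus \wp(K)$, the polynomial $x^p - x - a$ is separable and irreducible over $K$, so any root $b \in K^{\mathrm{sep}}$ generates a degree-$p$ separable extension $K(b)/K$ with $b^p - b = a$. Boundedness again produces only finitely many such extensions; their compositum $L$ is a finite Galois extension of $K$, and every coset of $\wp(K)$ in $(K,+)$ has a representative of the form $b^p - b$ with $b \in L$. Fact~\ref{fact:poizat-lem}(2) then gives $[K : \wp(K)] < \infty$.

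Nothing here is delicate; the only point requiring attention is verifying that the compositum of the boundedly many relevant degree-$q$ (resp.\ degree-$p$) subfields of $K^{\mathrm{sep}}$ is itself finite and Galois over $K$, which is immediate from the fact that the set of all such subfields is stable under $\Aut(K^{\mathrm{sep}}/K)$. Once that is in hand, both statements fall out of Fact~\ref{fact:poizat-lem} with no further work.
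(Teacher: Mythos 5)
Your argument is correct and takes essentially the same route as the paper's sketched proof: both reduce to Fact~\ref{fact:poizat-lem} by using boundedness to confine all the relevant Kummer (resp.\ Artin--Schreier) generators to finitely many finite Galois extensions of $K$ (you take their compositum and apply the fact once, the paper applies it to each of the finitely many extensions; the difference is immaterial). The only cosmetic slip is the inference ``the minimal polynomial of $b$ divides $x^q-a$, so $[K(b):K]$ divides $q$'' --- dividing a degree-$q$ polynomial only bounds the degree by $q$; the correct (and standard) justification is that for $q$ prime, $x^q-a$ is irreducible over $K$ whenever $a\notin P_q$, and in any case degree $\le q$ together with boundedness already suffices for your compositum argument.
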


We sketch a proof.
See \cite[Lemma 2.2]{FJ-bounded} for a proof of the characteristic zero case of Fact~\ref{fact:bounded}.1 via Galois cohomology.

\begin{proof}
We only prove $(1)$ as the proof of $(2)$ is similar.
Suppose $a \in K^*$ and $\alpha \in K^{\mathrm{sep}}$ satisfies $\alpha^q = a$.
Then $\alpha$ and its conjugates generate a degree $\le q$ Galois extension of $K$.
As $K$ is bounded there are only finitely many such extensions.
So by Fact~\ref{fact:bounded} $P_q$ has finite index in $(K^*,\times)$.
\end{proof}

Finally, Fact~\ref{fact:galois} is essentially proven in \cite{Macintyre-omegastable} via a Galois-theoretic argument.

\begin{fact}
\label{fact:galois}
Suppose that the following holds for any finite Galois extension $L$ of $K$:
\begin{enumerate}
\item the $q$th power map $L^* \to L^*$ is surjective for any prime $q \ne \Chara(K)$, and
\item if $\Chara(K) \ne 0$ then the Artin-Schreier map $L \to L$ is surjective.
\end{enumerate}
Then $K$ is separably closed.
\end{fact}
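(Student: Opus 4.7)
The plan is to argue by contradiction: assume (1) and (2) hold but $K \neq K^{\mathrm{sep}}$, and derive an impossibility. The first step is to bootstrap (1) and (2) into the stronger statement that for every finite Galois extension $L/K$ and every prime $q$, $L$ admits no cyclic extension of degree $q$. When $q = \Chara K$, this follows directly from (2) at $L$ via Artin-Schreier theory. When $q \neq \Chara K$, the field $L(\zeta_q)$ is finite Galois over $K$ (as a compositum of Galois extensions), so (1) applied to $L(\zeta_q)$ yields $L(\zeta_q)^{*} = (L(\zeta_q)^{*})^{q}$; since $\zeta_q \in L(\zeta_q)$, Kummer theory excludes all cyclic degree-$q$ extensions of $L(\zeta_q)$, and the coprimality $\gcd(q, [L(\zeta_q) : L]) = 1$ propagates the conclusion down to $L$.

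Next I would pick a finite Galois extension $M/K$ of minimal positive degree, and set $G := \Gal(M/K)$. By the Galois correspondence, minimality forces $G$ to be a simple group: any proper nontrivial normal subgroup would produce a strictly smaller nontrivial Galois subextension of $M/K$. If $G$ is cyclic of prime order, then $K$ itself admits a cyclic extension of prime degree, directly contradicting the bootstrap applied at $L = K$.

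The remaining case is $G$ non-abelian simple, which is the delicate core of the argument. From the bootstrap at $L = K$ together with (1), $K$ has no abelian extensions, hence contains all $q$th roots of unity for $q$ prime to $\Chara K$; in particular (in characteristic $\neq 2$) $\sqrt{-1} \in K$. I would then invoke the Artin-Schreier theorem on finite subgroups of $\Gal(K^{\mathrm{sep}}/K)$: any nontrivial finite subgroup has order two and fixes a real-closed intermediate field of $K^{\mathrm{sep}}/K$. Since no real-closed field contains $\sqrt{-1}$, no such subgroup exists, so $\Gal(K^{\mathrm{sep}}/K)$ is torsion-free. Because Feit-Thompson forces $|G|$ to be even, $G$ contains an involution; combining the absence of torsion in $\Gal(K^{\mathrm{sep}}/K)$ with a Sylow-theoretic analysis of the pro-$q$ Sylow subgroups of $\Gal(K^{\mathrm{sep}}/K)$ surjecting onto Sylow subgroups of $G$, together with the bootstrap applied to appropriately chosen finite Galois extensions of $K$, produces the final contradiction. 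The characteristic-$2$ case is handled symmetrically using (2).

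The main obstacle is this last step. The bootstrap and the reduction to a simple Galois group are elementary consequences of Kummer/Artin-Schreier theory, and the abelian case is immediate; but excluding the non-abelian simple case requires a careful interplay between the Artin-Schreier restrictions on finite subgroups of $\Gal(K^{\mathrm{sep}}/K)$ and the strong cohomological consequences of applying the hypotheses across many different finite Galois extensions of $K$, not just at $L = K$.
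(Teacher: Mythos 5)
Your bootstrap (no finite Galois extension $L/K$ admits a cyclic extension of prime degree), the reduction to a minimal nontrivial Galois extension $M/K$ with simple group $G$, and the disposal of the case $G\cong\Zz/q$ are all correct. But the proof is not complete: the non-abelian simple case, which you yourself flag as ``the delicate core,'' is never actually carried out. The ingredients you list do not assemble into a contradiction. Torsion-freeness of $\Gal(K^{\mathrm{sep}}/K)$ is perfectly compatible with the existence of a finite quotient containing an involution --- a free profinite group is torsion-free and surjects onto every finite group --- so Artin--Schreier plus Feit--Thompson yields nothing on its own. The natural Sylow/Cauchy move (take a subgroup $C\le G$ of prime order $q$, so that $M/M^{C}$ is cyclic of degree $q$, adjoin $\zeta_q$ if needed, and use Kummer or Artin--Schreier theory to contradict surjectivity of the $q$-th power or $\wp$ map) founders on exactly the point where your hypothesis is too weak: $M^{C}$ is in general \emph{not} Galois over $K$, so neither (1) nor (2) applies to it, and your bootstrap only reaches fields that are Galois over $K$. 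You offer no replacement, only the assertion that ``a careful interplay \dots\ produces the final contradiction.''

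This is precisely the step at which the classical Macintyre-style argument (which the paper cites rather than reproduces) invokes the surjectivity hypotheses at an arbitrary finite \emph{separable} extension of $K$: one takes any nontrivial finite Galois $N/K$, any prime $q$ dividing $[N:K]$, a subgroup $H\le\Gal(N/K)$ of order $q$ given by Cauchy's theorem, and applies the hypotheses to $F=N^{H}(\zeta_q)$, which need not be Galois over $K$. With the hypothesis stated for all finite separable extensions, the whole minimal-extension/simple-group apparatus becomes unnecessary and the statement follows in a few lines; and in the paper's application this costs nothing, since a finite extension $L$ of $K$ is interpretable in $K$ whether or not it is Galois, so (1) and (2) hold there anyway. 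So you should either prove the statement from the stronger (and, in context, freely available) hypothesis via the Cauchy-subgroup argument, or supply a genuine argument for the non-abelian simple case under the Galois-only hypothesis. As written, the proof has a real gap.
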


We now sketch a proof of Poizat's theorem.

\begin{corollary}
Suppose that $K$ is infinite, bounded, and stable.
Then $K$ is separably closed.
\end{corollary}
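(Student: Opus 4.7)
The plan is to invoke Fact~\ref{fact:galois}, which reduces the problem to verifying, for every finite Galois extension $L$ of $K$, that the $q$th power map on $L^*$ is surjective for each prime $q \ne \Chara(K)$, and (in positive characteristic) that the Artin-Schreier map $\wp \colon L \to L$ is surjective.

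First I would check that the hypotheses on $K$ descend to any such $L$: $L$ is infinite since $K$ is, $L$ is bounded because finite extensions of bounded fields are bounded, and $L$ is stable because $L$ is interpretable in $K$. Then Fact~\ref{fact:bounded} applied to $L$ immediately gives that $P_q(L) = \{a^q : a \in L^*\}$ has finite index in $(L^*, \times)$ for each prime $q \ne \Chara(K)$, and that $\wp(L)$ has finite index in $(L, +)$ whenever $\Chara(K) > 0$. Both $P_q(L)$ and $\wp(L)$ are definable subgroups, being the images of definable group homomorphisms, so the remaining task is to upgrade ``finite index'' to ``equal to the whole group''.

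This is where stability enters in full force. By Fact~\ref{fact:generic-basic} combined with Corollary~\ref{corollary:generic} applied to the stable field $L$, each of $(L,+)$ and $(L^*, \times)$ admits a unique $f$-generic type over $L$. I would then invoke the standard fact from stable group theory that a stable group with a unique generic type is connected, i.e., admits no proper definable subgroup of finite index. Applying this to $(L,+)$ and $(L^*, \times)$ forces $\wp(L) = L$ and $P_q(L) = L^*$. Together these verify both hypotheses of Fact~\ref{fact:galois}, and we conclude that $K$ is separably closed.

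The main obstacle --- and really the only nontrivial step --- is the invocation of connectedness of $(L,+)$ and $(L^*, \times)$ in a stable field from uniqueness of the $f$-generic type. This rests on the classical correspondence between cosets of the connected component $G^0$ (the intersection of all definable finite-index subgroups) and generic types in a stable group, so ``unique generic'' forces $G^0 = G$. This is basic stable group theory, cleanly available in Poizat ~\cite{Poizat}, though not developed internally in the paper.
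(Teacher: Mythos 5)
Your proposal is correct and follows essentially the same route as the paper's own proof: reduce via Fact~\ref{fact:galois} to finite Galois extensions $L$, note $L$ is bounded and stable, get finite index of $P_q$ and $\wp(L)$ from Fact~\ref{fact:bounded}, and use uniqueness of the generic type in a stable field (Fact~\ref{fact:generic-basic}) to rule out proper definable finite-index subgroups of $(L^*,\times)$ and $(L,+)$. The only difference is cosmetic ordering of the steps.
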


\begin{proof} 
We verify the conditions of Fact~\ref{fact:galois}.
Suppose that $L$ is a finite Galois extension of $K$.
Then $L$ is bounded and stable (the latter holds as $L$ is interpretable in $K$).
As $L$ is stable there is a unique additive (multiplicative) generic type over $K$, see Fact~\ref{fact:generic-basic}.
It follows that there are no proper finite index definable subgroups of $(L^*,\times)$ or $(L,+)$.
So by Fact~\ref{fact:bounded} the $q$th power map $L^* \to L^*$ is surjective for any prime $q \ne \Chara(K)$ and if $\Chara(K) > 0$ then the Artin-Schreier map $L \to L$ is surjective.
\end{proof}

We  repeat that it follows from Fact 5.1 and Theorem~\ref{thm:bounded} that: 

\begin{corollary} 
\label{cor:poizat-large}
Suppose that $K$ is large and simple.
Then
\begin{enumerate}
\item if $q \ne \Chara(K)$ is prime then $P_q$ has finite index in $(K^*,\times)$, and
\item if $\Chara(K) > 0$ then $\wp(K)$ has finite index in $(K,+)$.
\end{enumerate}
\end{corollary}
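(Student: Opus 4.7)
The plan is a direct chaining of two results already established earlier in the paper. Since $K$ is assumed to be large and simple, Theorem~\ref{thm:bounded} gives that $K$ is bounded. Then Fact~\ref{fact:bounded}, applied to the bounded field $K$, produces both desired conclusions at once: its part (1) states exactly that $P_q$ has finite index in $(K^*,\times)$ for every prime $q \ne \Chara(K)$, and its part (2) states exactly that $\wp(K)$ has finite index in $(K,+)$ when $\Chara(K) > 0$. So the proof collapses into a single ``combine these two facts'' step.

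There is no real obstacle specific to the corollary itself; all the difficulty has been absorbed into the two ingredients. The nontrivial input is Theorem~\ref{thm:bounded}, whose proof occupies Section~4 and rests on the Krasner-type statement Theorem~\ref{thm:krasner} (asserting that each $\approx$-class of $U$ contains the element witnessing it in an EE neighbourhood) combined with the fact that nonempty EE sets are $f$-generic in $(K^n,+)$ (Proposition~\ref{prop:generic-open}) together with the finite partition statement Lemma~\ref{lem:equiv-rel}. The other ingredient, Fact~\ref{fact:bounded}, is a classical consequence of boundedness sketched just above the corollary via Fact~\ref{fact:poizat-lem} by the usual observation that a $q$th root (respectively an Artin--Schreier preimage) of $a$ generates a Galois extension of degree at most $q$ (respectively $p$), of which boundedness allows only finitely many.

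Thus the entire proposal is: invoke Theorem~\ref{thm:bounded} to pass from ``large and simple'' to ``bounded'', then invoke Fact~\ref{fact:bounded} to pass from ``bounded'' to the two finite-index statements. No additional argument is required, and no calculation is hidden.
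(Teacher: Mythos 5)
Your proposal is correct and is exactly the derivation the paper gives: the text immediately preceding the corollary states that it follows from Theorem~\ref{thm:bounded} together with Fact~\ref{fact:bounded}. (The paper additionally sketches a second, direct proof via \'etale morphisms and Lemma~\ref{lem:index-generic}, but your route is the one officially cited.)
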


Corollary~\ref{cor:poizat-large}.2 is proven more generally for infinite simple fields in \cite{Kaplan2011}.
We take the opportunity to sketch a direct proof of Corollary~\ref{cor:poizat-large}.
We let $\Gg_m$ be the scheme-theoretic multiplicative group $\Spec K[x,x^{-1}]$, so $\Gg_m(K) = K^*$.

\begin{proof}
We first fix a prime $q \ne \Chara(K)$.
The morphism ${\mathbb G}_{m} \to \Aa^{1}$ given by $x \mapsto ax^q$ is \'etale for any $a \in K^*$.
So any coset of $P_q$ is an EE subset of $K$.
By the special (and easier) case of  Proposition~\ref{prop:generic-open} when $n=1$, any coset of $P_q$ is $f$-generic in $(K^*,\times)$.
By Lemma~\ref{lem:index-generic} $P_q$ has finite index in $(K^*,\times)$.
Item $(2)$ follows by a similar argument and the fact that the Artin-Schreier morphism $\Aa^1 \to \Aa^1$ is \'etale.
\end{proof}

Fehm and Jahnke construct an unbounded $\mathrm{PAC}$ field $K$ such that the group of $n$th powers has finite index in each finite extension of $K$~\cite[Proposition 4.4]{FJ-bounded}, so Theorem~\ref{thm:bounded} does not follow from Corollary~\ref{cor:poizat-large}.

\subsection{Conics, Brauer group, and projectivity}
\label{section:conics}
Corollary~\ref{cor:power-sum} follows from Corollary~\ref{cor:poizat-large} and Fact~\ref{fact:coset-sum}.

\begin{corollary}
\label{cor:power-sum}
Suppose that $K$ is large and simple, $a,b \in K^*$, and $p \ne \Chara(K)$ is a prime.
Then there are $c,d \in K$ such that $c^p + ad^p = b$.
\end{corollary}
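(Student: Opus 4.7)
The plan is to combine Corollary~\ref{cor:poizat-large}$(1)$ with Fact~\ref{fact:coset-sum} in essentially one line. By Corollary~\ref{cor:poizat-large}$(1)$ the subgroup $P_p = \{x^p : x \in K^*\}$ has finite index in $(K^*,\times)$, and it is clearly definable. So the hypotheses of Fact~\ref{fact:coset-sum} are satisfied with $H = P_p$.

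Next, I would take the two cosets $H_1 = P_p$ (the identity coset) and $H_2 = a P_p$ of $H$ in $(K^*,\times)$. Fact~\ref{fact:coset-sum} then gives that every nonzero element of $K$, and in particular the element $b \in K^*$, can be written as a sum of an element of $H_1$ and an element of $H_2$. Unfolding the definitions, this says exactly that there exist $c, d \in K^*$ with $b = c^p + a d^p$, as required.

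No step is really an obstacle here: the content has already been extracted in Corollary~\ref{cor:poizat-large} and Fact~\ref{fact:coset-sum}, and the argument is a direct matching of hypotheses. The only thing to double-check is that $P_p$ is indeed definable (it is the image of the definable $p$-th power map) and that $H_1 = P_p$ counts as a coset of itself (it is the identity coset), both of which are immediate.
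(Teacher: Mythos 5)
Your proof is correct and is exactly the paper's argument: the paper derives Corollary~\ref{cor:power-sum} by combining Corollary~\ref{cor:poizat-large}$(1)$ with Fact~\ref{fact:coset-sum} applied to the cosets $P_p$ and $aP_p$, just as you do. The details you fill in (definability of $P_p$, taking $H_1$ to be the identity coset) are all fine.
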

The proof in ~\cite{supersimple-field} that conics over (infinite) supersimple fields have points now extends to proving Theorem ~\ref{thm:conics}.

\veno
{\em Proof of  Theorem \ref{thm:conics}.}
Let $C$ be a conic, i.e. a smooth projective irreducible $K$-curve of genus $0$.
As $\Chara(K) \ne 2$ we may assume that $C$ is a closed subvariety of $\mathbb{P}^2$ given by the homogenous equation $ax^2 + by^2 = z$ for some $a,b \in K^*$.
By Corollary~\ref{cor:power-sum} there are $c,d \in K$ such that $ac^2 + bd^2 = 1$.
So $C(K)$ is nonempty.

\veno
We let $\br K$ be the Brauer group of $K$.
Recall that the Brauer group of an arbitrary field is an abelian torsion group.
Given a prime $p$ we let $\br_p K$ be the $p$-part of the Brauer group of $K$.
Facts~\ref{fact:psw} and \ref{fact:psw-1} both follow by the proof of \cite[Theorem 4.6]{supersimple-field}.

\begin{fact}
\label{fact:psw}
Let $p \ne \Chara(K)$ be a prime.
Suppose that whenever $L$ is a finite separable extension of $K$ and $a \in L^*$, then $\{ b^p + ac^p : b,c \in L^* \}$ contains $L^*$. 
Then $\br_p K$ is trivial.
\end{fact}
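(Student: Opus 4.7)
The plan is to use Galois cohomology to reduce triviality of $\br_p K$ to splitting of cyclic algebras of degree $p$, and then to verify splitting by an explicit norm computation that exploits the hypothesis. I would proceed in three steps.

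First, I would reduce to the case $\mu_p \subseteq K$. Let $K' = K(\zeta_p)$ where $\zeta_p$ is a primitive $p$-th root of unity. Any finite separable extension of $K'$ is also a finite separable extension of $K$, so the hypothesis transfers from $K$ to $K'$. Since $[K':K]$ divides $p-1$ and is therefore coprime to $p$, the standard restriction-corestriction argument ($\mathrm{Cor}\circ\mathrm{Res}$ acts as multiplication by $[K':K]$, which is an automorphism of the $p$-torsion group $\br_p K$) shows that $\br_p K \to \br_p K'$ is injective. Hence it suffices to prove $\br_p K' = 0$, and I may assume $\mu_p \subseteq K$.

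Second, under this assumption the Merkurjev--Suslin theorem says that $\br_p K$ is generated as an abelian group by classes of cyclic algebras $(a,b)_{\zeta,p}$ for $a,b \in K^*$. So it suffices to show every such cyclic algebra splits. If $a \in K^{*p}$ this is immediate; otherwise $M := K(\sqrt[p]{a})$ is a cyclic degree $p$ extension, and $(a,b)_{\zeta,p}$ splits if and only if $b \in N_{M/K}(M^*)$. The task therefore reduces to showing every element of $K^*$ is a norm from $M$.

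Third, set $\alpha = \sqrt[p]{a}$ and compute $N_{M/K}(c+d\alpha)$ by multiplying conjugates: a short direct calculation based on $\prod_i(X - \zeta_p^i \alpha) = X^p - a$ yields $N_{M/K}(c+d\alpha) = c^p + (-1)^{p+1} a d^p$ for all $c,d \in K$. For $p$ odd this equals $c^p + a d^p$, and the hypothesis applied at $L = K$ with parameter $a$ immediately exhibits every element of $K^*$ as such a norm. For $p=2$ the formula becomes $c^2 - a d^2$; since $-a$ ranges over $K^*$ together with $a$, I would instead apply the hypothesis with $-a$ in place of $a$ to represent every element of $K^*$ as $c^2 + (-a) d^2 = N_{K(\sqrt{a})/K}(c + d\sqrt{a})$. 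Either way every $b \in K^*$ is a norm from $M$, the cyclic algebra splits, and $\br_p K = 0$.

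The only serious input is Merkurjev--Suslin, used to reduce $\br_p K$ to symbols; the remaining ingredients — Kummer-theoretic injectivity via restriction-corestriction and the explicit Vandermonde-style norm calculation — are routine. The main obstacle is therefore not really conceptual but rather the reliance on this deep theorem to reduce the Brauer group to cyclic algebras.
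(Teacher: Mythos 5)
Your proof is correct, but it takes a genuinely different route from the one the paper relies on. The paper defers to the proof of \cite[Theorem 4.6]{supersimple-field}, which is the classical cohomological argument: a class in $\br(K)[p]$ is split by a finite Galois extension; restricting to the fixed field of a $p$-Sylow subgroup (a prime-to-$p$ extension, so restriction is injective on $p$-torsion) reduces to a class split by a tower of cyclic degree-$p$ extensions, and inflation--restriction together with Hilbert 90 reduces triviality to surjectivity of the norm $N_{L'/L}$ for cyclic degree-$p$ extensions $L'/L$ with $L$ an essentially arbitrary finite separable extension of $K$; after adjoining $\zeta_p$ that surjectivity follows from exactly your computation $N_{M/L}(c+d\alpha)=c^p+(-1)^{p+1}ad^p$ and the hypothesis. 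That dévissage is elementary but consumes the hypothesis for all finite separable $L$. You instead invoke Merkurjev--Suslin to generate $\br(K(\zeta_p))[p]$ by degree-$p$ symbols and split each symbol by the same norm form, using the hypothesis only at $L=K(\zeta_p)$; this yields a formally stronger statement, at the price of a very deep theorem where the classical route needs only basic group cohomology. Two minor points: Merkurjev--Suslin gives generation of the $p$-torsion $\br(K')[p]$, so add the (trivial) remark that a $p$-primary group with trivial $p$-torsion vanishes to conclude for the $p$-part $\br_p K'$; and in the paper's application the hypothesis holds for all finite separable extensions anyway, and Proposition~\ref{prop:coho} needs $\br_p L=0$ for every such $L$, so the elementary argument is doing that work regardless.
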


\begin{fact}
\label{fact:psw-1}
Suppose that:
\begin{enumerate}
\item $K$ is perfect, and
\item if $L$ is a finite extension of $K$, $p$ is a prime, and $a \in L^*$, then $\{ b^p + ac^p : b,c \in L^*\}$ contains $L^*$.
\end{enumerate}
Then the Brauer group of $K$ is trivial.
\end{fact}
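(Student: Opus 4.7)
The plan is to reduce the problem to the vanishing of each primary component of the Brauer group. Since $\br K$ is abelian and torsion, we have $\br K = \bigoplus_{\ell \text{ prime}} \br_\ell K$, so it suffices to prove $\br_\ell K = 0$ for every prime $\ell$.

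For primes $\ell \neq \Chara(K)$, I would apply Fact~\ref{fact:psw} directly. Since $K$ is perfect by hypothesis (1), every finite extension $L$ of $K$ is automatically separable, so hypothesis (2) specialized to the prime $\ell$ coincides exactly with the hypothesis of Fact~\ref{fact:psw} for that prime. The latter then yields $\br_\ell K = 0$ for every $\ell \neq \Chara(K)$.

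The remaining case is $\ell = p := \Chara(K) > 0$. Here the plan is to follow the same blueprint as in \cite[Theorem 4.6]{supersimple-field}, but replacing the Kummer-theoretic step (which drove the $\ell \neq \Chara(K)$ case) with an Artin--Schreier analogue. Any class in $\br_p K$ is represented, after base change to a finite separable extension, by a cyclic degree-$p$ algebra attached to an Artin--Schreier extension generated by a root of $x^p - x - \alpha$; one then shows that every such algebra is split by exhibiting the required element as a norm. The perfectness of $K$ (and of its finite extensions, which remain perfect) is the crucial input: via the identity $b^p + a c^p = (b + a^{1/p} c)^p$, which makes sense because $a^{1/p}$ lies in $L$, hypothesis (2) at $p = \Chara(K)$ translates directly into the surjectivity statement needed to split these cyclic algebras. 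Iterating this argument over finite extensions yields $\br_p K = 0$.

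The main obstacle is the characteristic-$p$ step: the prime-by-prime decomposition and the application of Fact~\ref{fact:psw} for $\ell \neq \Chara(K)$ are immediate, but verifying that the Artin--Schreier substitute for the Kummer computation in \cite{supersimple-field} goes through requires some care. The payoff of perfectness is precisely dual here — it gives separability of finite extensions so that Fact~\ref{fact:psw} applies, and it supplies the $p$th roots that make the cyclic-algebra splitting in characteristic $p$ tractable.
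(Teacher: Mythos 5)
Your reduction to primary components and your treatment of the primes $\ell \ne \Chara(K)$ are correct and are exactly what the paper intends: since $K$ is perfect, every finite extension is separable, so hypothesis (2) at $\ell$ is precisely the hypothesis of Fact~\ref{fact:psw}, which kills $\br_\ell K$. (The paper gives no written proof of Fact~\ref{fact:psw-1}; it only points to the proof of \cite[Theorem 4.6]{supersimple-field}, which proceeds the same way.)

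The characteristic-$p$ step, however, contains a genuine gap. Your own identity $b^p + ac^p = (b + a^{1/p}c)^p$ shows that in a perfect field $L$ of characteristic $p$ the set $\{b^p + ac^p : b,c \in L^*\}$ is just a subset of $L^p = L$, and hypothesis (2) at the prime $p = \Chara(K)$ is \emph{automatically satisfied} (this is exactly how the paper verifies it when proving Theorem~\ref{thm:brauer}); it therefore carries no information and cannot drive any splitting argument. Moreover, the form $b^p + ac^p$, being the $p$-th power of a linear form, is not the norm form of an Artin--Schreier extension $K(\wp^{-1}(\alpha))/K$ -- that norm is a nondegenerate degree-$p$ form in $p$ variables -- so surjectivity of $(b,c) \mapsto b^p + ac^p$ does not split the cyclic algebras $[\alpha,\beta)$ that generate $\br_p K$ by Albert's theorem. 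The correct argument is that perfectness alone forces $\br_p K = 0$: since $K$ is perfect, $K^{\mathrm{sep}} = \kalg$ and the Frobenius $x \mapsto x^p$ is an automorphism of the Galois module $\kalg^*$, so $\kalg^*$ is uniquely $p$-divisible; hence $H^2(\Gal(\kalg/K), \kalg^*) = \br K$ is a $\Zz[1/p]$-module and its ($p$-)torsion part $\br_p K$ vanishes. (Alternatively, cite the standard fact that a perfect field of characteristic $p$ has no $p$-torsion in its Brauer group.) With that substitution your proof is complete; as written, the Artin--Schreier step would not go through.
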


We now prove Theorem~\ref{thm:brauer}.

\begin{proof}
It suffices to show that the second condition of Fact~\ref{fact:psw-1} is satisfied.
Let $L$ be a finite extension of $K$ and $p$ be a prime.
Note that $L$ is perfect as a finite extension of a perfect field is perfect, the case when $p = \Chara(K)$ follows.
Suppose that $p \ne \Chara(K)$.
Note that $L$ is simple as $L$ is interpretable in $K$ and $L$ is large by Fact~\ref{fact:alg-ext}.
Apply Corollary~\ref{cor:power-sum}.
\end{proof}

\noindent
Theorem~\ref{thm:projective} follows from Proposition~\ref{prop:coho} as a field of cohomological dimension $\le 1$ has projective absolute Galois group, see Gruenberg~\cite{gruenberg}.

\begin{proposition}
\label{prop:coho}
If $K$ is simple and large then $K$ has cohomological dimension $\le 1$.
\end{proposition}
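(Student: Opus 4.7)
The plan is to prove $\mathrm{cd}_\ell(K) \le 1$ one prime $\ell$ at a time. If $\ell = \Chara(K) > 0$ then $\mathrm{cd}_\ell(K) \le 1$ holds automatically by \cite[II.2.2, Proposition 3]{serre-coho}, so I may assume $\ell \ne \Chara(K)$. The Brauer-group criterion \cite[II.3.1, Proposition 5]{serre-coho} then reduces $\mathrm{cd}_\ell(K) \le 1$ to showing that $\br_\ell(L) = 0$ for every finite separable extension $L$ of $K$.

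So fix such an $L$ and $\ell$. Since largeness passes to algebraic extensions (Fact~\ref{fact:alg-ext}) and simplicity passes to interpretable structures, $L$ is itself large and simple. I then apply Fact~\ref{fact:psw} with $L$ in place of $K$: it suffices to show that for every finite separable extension $L'$ of $L$ and every $a \in L'^{*}$, the set $\{b^\ell + a c^\ell : b,c \in L'^{*}\}$ contains $L'^{*}$. Any such $L'$ is again a finite separable extension of $K$, hence is again large and simple.

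The final step is to establish this nonzero-coefficient version of Corollary~\ref{cor:power-sum} in $L'$. For this I would unwind the proof of that corollary rather than cite its statement: Corollary~\ref{cor:poizat-large} gives that $P_\ell = \{x^\ell : x \in L'^{*}\}$ has finite index in $(L'^{*}, \times)$, and then Fact~\ref{fact:coset-sum} applied to the two cosets $P_\ell$ and $a P_\ell$ yields $P_\ell + a P_\ell \supseteq L'^{*}$. Since every coset of $P_\ell$ is contained in $L'^{*}$, this is precisely the statement that every element of $L'^{*}$ is of the form $c^\ell + a d^\ell$ with $c,d \in L'^{*}$, as required by Fact~\ref{fact:psw}.

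I expect the only real subtlety to be this last point: Fact~\ref{fact:psw} demands $c,d \in L'^{*}$ whereas Corollary~\ref{cor:power-sum} is stated with $c,d \in L'$, so one must note that the coset-based argument already delivers the stronger nonzero conclusion. Importantly, perfection of $K$ plays no role, because both Serre's criterion and Fact~\ref{fact:psw} are phrased purely in terms of separable extensions; this is what allows the imperfect case to be handled without invoking Theorem~\ref{thm:brauer} directly, by treating each $\ell$-primary part separately and absorbing the characteristic-$p$ component into Serre's automatic bound.
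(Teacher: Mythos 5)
Your proposal is correct and follows essentially the same route as the paper: reduce $\mathrm{cd}_p(K)\le 1$ prime by prime via Serre's Brauer-group criterion (the characteristic-$p$ part being automatic), and verify the hypothesis of Fact~\ref{fact:psw} for every finite separable extension using Corollary~\ref{cor:poizat-large} together with Fact~\ref{fact:coset-sum}. Your explicit check that the coset argument yields $c,d\in L'^{*}$ rather than merely $c,d\in L'$ is a point the paper glosses over (it cites Corollary~\ref{cor:power-sum} as stated), and it is handled correctly here.
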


See \cite[I \textsection 3]{serre-coho} for an overview of cohomological dimension.
The proof of Proposition~\ref{prop:coho} is due to Philip Dittmann.
We do not know if every large simple field has trivial Brauer group.

\begin{proof}
Suppose $K$ is simple and large.
The same argument as in the proof of Theorem~\ref{thm:brauer} shows that if $p \ne \Chara(K)$ is a prime, $L$ is a finite separable extension of $K$, and $a \in L^*$, then $\{ b^p + ac^p : b,c \in L^*\}$ contains $L^*$.
So by Fact~\ref{fact:psw} $\br_p L$ is trivial for every finite extension $L$ of $K$ and prime $p \ne \Chara(K)$.
By \cite[II.2.3 Proposition 4]{serre-coho} $K$ has $p$-cohomological dimension $\le 1$ for every prime $p \ne \Chara(K)$.
By \cite[II.2.2 Proposition 3]{serre-coho} any field $L$ has $\Chara(L)$-cohomological dimension $\le 1$.
So $K$ has cohomological dimension $\le 1$.
\end{proof}


\appendix
\section{$\nsopinfty$ fields}

A theory $T$ has the \textbf{fully finite strong order property} if there is a formula $\psi(x,y)$, with the two tuples of variables $x$ and $y$ having equal length, a model $M \models T$, and a sequence $(a_i)_{i \in \omega}$ of tuples in $M$ satisfying $M \models \psi(a_i, a_j)$ for all $i < j$, and for any $n \geq 3$ the formula $\psi(x_1, x_2) \land \dotsb \wedge \psi(x_{n-1}, x_n) \wedge \psi(x_n, x_1)$ is inconsistent with $T$.
In short, the binary relation described by $\psi$ admits infinite chains and does not admit cycles.
In this situation we also say that $T$ has or is $\sopinfty$.
A structure $M$ is $\sopinfty$ if its theory is.
A theory or structure is $\nsopinfty$ if it is not $\sopinfty$.

\meno
In Shelah's terminology, $T$ having the fully finite strong order property witnessed by $\psi$ is equivalent to $\psi$ having the $n$-strong order property $\mathrm{SOP}_n$ for $T$, for all $n \geq 3$ \cite[Definition~2.5]{shelah-sop}.
In particular, if $T$ is complete and simple then $T$ is $\nsopinfty$ \cite[Claim~2.7]{shelah-sop}.
The notion ``fully finite strong order property'' seems to have first appeared in an unpublished manuscript by Adler \cite{adler-unpub}, although it has by now also been used elsewhere \cite[Definition~2.1]{ConantTerry_Urysohn}.

\subsection{Valuations}

The Henselization of a PAC field with respect to any non-trivial valuation is separably closed \cite[Corollary 11.5.9]{field-arithmetic}.
Thus the following can be seen as supporting evidence for the conjecture that large simple fields are PAC.

\begin{theorem}  
\label{thm:largevalued}
Suppose that $K$ is large and $v$ is a non-trivial valuation on $K$.
If $(K,v)$ has non-separably closed Henselization then $K$ is $\sopinfty$.
In particular if either the residue field of $v$ is not algebraically closed or the value group of $v$ is not divisible then $K$ is $\sopinfty$.
\end{theorem}

The second claim of Theorem~\ref{thm:largevalued} follows from the first as the Henselization of $(K,v)$ has the same residue field and value group as $(K,v)$ \cite[Theorem 5.2.5]{EP-value}, and a non-trivially valued separably closed field has algebraically closed residue field and divisible value group~\cite[Theorem 3.2.11]{EP-value}.
We will make use of Fact~\ref{fact:hensel-refine}, proven in \cite[Theorem 6.15]{JTWY}.
 
\begin{fact}
\label{fact:hensel-refine} Let $v$ be a non-trivial valuation on $K$.
If the Henselization of $(K,v)$ is not separably closed then the \'etale open topology refines the $v$-adic topology on $K$.
\end{fact}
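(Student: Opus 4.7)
The plan is to show that every $v$-adic open subset of $K$ is étale open; by translation-invariance of both topologies on $(K,+)$ it suffices to produce, for each $\gamma \in \Gamma_v$, an EE subset of $K$ that contains $0$ and is contained in $\{x \in K : v(x) > \gamma\}$. I will do this by constructing a single EE set $X_0 \ni 0$ that is $v$-adically bounded below, and then multiplicatively dilating it.

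First I produce a suitable polynomial. Since $K^h$ is not separably closed, there is a monic separable irreducible $q_0 \in K^h[y]$ of degree $n \geq 2$. Because $K$ is dense in $K^h$ (both share the completion $\hat K$), and by the standard Krasner-type stability of factorization patterns under small $v$-adic perturbations of coefficients, I may choose $p \in K[y]$ monic separable of degree $n$ that remains irreducible over $K^h$; in particular $p$ has no root in $K^h$, so $p(0) \neq 0$. Let $V' \subseteq \mathbb{A}^1$ be the open subvariety defined by $p(x) \neq 0$ and $p(x) - x p'(x) \neq 0$, and let $f : V' \to \mathbb{A}^1$ be the morphism $f(x) = x/p(x)$. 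Its derivative $(p(x) - x p'(x))/p(x)^2$ is nowhere vanishing on $V'$, so $f$ is étale; since $0 \in V'(K)$ and $f(0) = 0$, the set $X_0 := f(V'(K))$ is an EE subset of $K$ containing $0$.

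Next I bound $X_0$ via Krasner. Fix an extension $\tilde v$ of $v$ to $K^{\mathrm{sep}}$ and let $\alpha_1, \ldots, \alpha_n \in K^{\mathrm{sep}}$ be the roots of $p$; none lies in $K^h$ by irreducibility of $p$ over $K^h$. Krasner's lemma applied inside the Henselian field $K^h$ gives that for each $i$ and every $a \in K^h$, $\tilde v(a - \alpha_i) \leq \delta_i := \max_{j \neq i} \tilde v(\alpha_i - \alpha_j)$; otherwise $\alpha_i$ would lie in $K^h$. Summing, $v(p(a)) = \sum_i \tilde v(a - \alpha_i) \leq M := \sum_i \delta_i$ for every $a \in K$, so $v(f(a)) = v(a) - v(p(a)) \geq v(a) - M$. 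Combined with the elementary estimates $v(p(a)) = n v(a)$ when $v(a)$ is sufficiently negative (making $v(f(a)) = -(n-1) v(a)$ large and positive) and $v(p(a)) = v(p(0))$ when $v(a)$ is sufficiently positive, $v(f(a))$ is bounded below by some $c_0 \in \Gamma_v$ depending only on $p$. Hence $X_0 \subseteq \{y \in K : v(y) \geq c_0\}$.

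Finally, given $\gamma \in \Gamma_v$, choose $\mu \in K^*$ with $v(\mu) > \gamma - c_0$, which exists since $\Gamma_v$ is unbounded above as $v$ is non-trivial. By Corollary~\ref{cor:translate}, $\mu X_0$ is an EE subset of $K$ containing $0$, and $\mu X_0 \subseteq \{y : v(y) > \gamma\}$, as required. The main obstacle is the polynomial construction: we must ensure $p \in K[y]$ rather than merely $p \in K^h[y]$, which is handled by density of $K$ in $K^h$ together with Krasner-type stability of $K^h$-irreducibility under $v$-adic perturbation of coefficients. If any $\alpha_i$ lay in $K^h$, the corresponding factor $v(a - \alpha_i)$ in $v(p(a))$ would be unbounded as $a \in K$ approaches $\alpha_i$, destroying the lower bound on $v(f(a))$.
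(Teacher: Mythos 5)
Your overall strategy is sound and is essentially the standard one: produce a monic separable $p \in K[y]$ none of whose roots lies in $K^h$, use Krasner's lemma over the Henselian field $K^h$ to bound each $\tilde v(a-\alpha_i)$, hence $v(p(a))$, from above uniformly in $a \in K$, conclude that the \'etale image of $x \mapsto x/p(x)$ is a $v$-bounded EE neighbourhood of $0$, and then dilate and translate. The valuation estimates in your second and third paragraphs are correct.

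The gap is in the construction of $p$. You claim $K$ is dense in $K^h$ (``both share the completion $\hat K$'') and perturb a $K^h$-irreducible polynomial to get one over $K$. This is true in rank $1$ but false in general, and the Fact is stated for arbitrary non-trivial valuations. For example, let $K = \Cc(t)((s))$ with $v$ the rank-two composite of the $s$-adic valuation $\bar v$ and the $t$-adic valuation on the residue field $\Cc(t)$. The $v$-adic topology on $K$ is the $s$-adic topology, for which $K$ is complete and hence closed in every valued extension; but $(K,v)$ is not Henselian (a composite valuation is Henselian only if the induced valuation on the coarse residue field is, and $\Cc(t)$ is not $t$-adically Henselian), so $K$ is a proper, closed, non-dense subfield of $K^h$. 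Moreover $K^h$ is not separably closed here (its value group $\Zz^2$ is not divisible), so this is an instance the Fact must cover and where your perturbation step fails. Fortunately the rest of your argument uses only that no root of $p$ lies in $K^h$, and such a $p$ exists without any density: since $K^h$ is a separable algebraic extension of $K$ and is not separably closed, there is a finite Galois extension $L/K$ with $L \not\subseteq K^h$; let $p$ be the minimal polynomial over $K$ of a primitive element of $L$. Every root of $p$ generates $L$ over $K$ and hence lies outside $K^h$, so your Krasner bound applies verbatim (each $\alpha_i$ is separable over $K^h$ with at least one distinct $K^h$-conjugate). With that substitution the proof is correct.
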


The following argument using generics was used in a preliminary version of the main article to get the simple case of Theorem~\ref{thm:largevalued}.
Suppose that $K$ is simple and the Henselization of $(K,v)$ is not separably closed.
By Fact~\ref{fact:hensel-refine}, $\mfrakv$ is an \'etale open neighbourhood of $0$, so there is an EE subset $U$ of $K$ satisfying $0 \in U \subset \mfrakv$.
By Proposition~\ref{prop:generic-open} the set $U$ is $f$-generic for $(K,+)$.
This contradicts Lemma~\ref{lem:index-generic} as $\mfrakv$ is an infinite index subgroup of $(K,+)$.
This argument does not generalize to large $\nsopone$ fields as at present there is no theory of generics in $\mathrm{NSOP}_1$ groups.
(This is not straightforward: Dobrowolski~\cite{dobrowolski2020sets} gives an example of a definable group in an $\mathrm{NSOP}_1$ structure in which generics with respect to Kim forking do not exist.)

\begin{lemma}
\label{lem:powers}
Suppose that $K$ is large and $U \subseteq K$ is an \'etale open neighbourhood of zero.
Then for any $n \ge 2$ there is $a \in K^*$ such that $a,a^2,\ldots,a^n \in U$.
\end{lemma}

\begin{proof}
For each $i \in \{2,\ldots,n\}$ let $V_i = \{ b \in K : b^i \in U\}$.
Each map $K \to K$, $b \mapsto b^i$ is continuous with respect to the \'etale open topology, so each $V_i$ is an \'etale open neighbourhood of zero.
Then $V = V_1 \cap \ldots \cap V_n$ is an \'etale open neighbourhood of zero.
As $K$ is large $V$ contains a nonzero element of $K$.
\end{proof}

\begin{proof}[Proof of Theorem~\ref{thm:largevalued}]
By Fact~\ref{fact:hensel-refine} there is a nonempty EE subset $U$ of $K$ with $0 \in U \subseteq \mfrakv$.
Let $\psi(x,y)$ be the formula $(x \ne 0) \land (y \ne 0) \land (x^{-1}y \in U)$.
Note that if $K \models \psi(a,b)$ then $b/a \in \mfrakv$, hence $v(a) < v(b)$.
We show that $\psi(x,y)$ witnesses $\sopinfty$.
First suppose that $a_1,\ldots,a_n \in K$ satisfy
\[
\psi(a_1,a_2) \land \cdots \land \psi(a_{n -1},a_n) \land \psi(a_n,a_1).
\]
Then we have $v(a_1) < v(a_2) < \cdots < v(a_{n - 1}) < v(a_n) < v(a_1)$, contradiction.
We now show that for each $n \ge 1$ there are $a_1,\ldots,a_n \in K$ such that $K \models \psi(a_i,a_j)$ if and only if $i < j$.
By Lemma~\ref{lem:powers} there is $a \in K^*$ such that $a,a^2,\ldots,a^n \in U$.
Then $K \models \psi(a^i,a^j)$ for $i < j$.
Thus the binary relation on $K$ defined by $\psi$ admits chains of arbitrary finite length, and in a saturated elementary extension of $K$ we obtain an infinite chain.
Thus $K$ is $\sopinfty$.
\end{proof}

\meno
Recall that EE sets are existentially definable.
Note that the formula $\psi$ in the proof of Theorem~\ref{thm:largevalued} is existential.
This is optimal as a quantifier-free formula in an arbitrary field is stable.
This is similar to the result, proven in \cite[Theorem 3.1]{JTWY}, that an unstable large field admits an unstable existential formula.
The witnesses for $\sopinfty$ produced in the next section are also existential.

\meno

If $v$ is actually Henselian, the same technique as in the proof of Theorem~\ref{thm:largevalued} gives a slightly stronger statement.
This is presumably well known to the experts, but appears not to be available in the literature.
\begin{theorem}
Suppose that $v$ is a non-trivial Henselian valuation on $K$ and $K$ is not separably closed.
Then $K$ has the strict order property \cite[Definition 2.1]{shelah-sop}.
\end{theorem}
\begin{proof}
Fact~\ref{fact:hensel-refine} provides an EE subset $U$ of $K$ with $0 \in U \subseteq \mathfrak{m}_v$.
By \cite[Theorem B]{JTWY} the $v$-topology on $K$ agrees with the étale open topology, hence $U$ is $v$-open, and in particular contains a ball around $0$.\footnote{This
argument does not seriously use the étale open topology -- we only need that the topology given by $v$ is definable in the field language.
This latter fact is already implicit in \cite[Remark 7.11]{Prestel1978}.
  }
Therefore, for any element $c \in K^\times$ with $v(c)$ sufficiently large we have $cU \subsetneq U$.
Thus the definable family $\{ xU \colon x \in K \}$ contains the infinite chain $U \supsetneq cU \supsetneq c^2 U \supsetneq \dotsb$ under inclusion.
Hence $K$ has the strict order property.
\end{proof}



\subsection{Formally real and formally $p$-adic fields}
\label{section:formally real}
Corollary~\ref{cor:power-sum} implies that if $K$ is large, simple, and of characteristic zero then there are $a,b \in K$ such that $a^2 + b^2 = -1$.
So a large simple field cannot be formally real.
Duret~\cite{duret-formally-real} showed that formally real fields are unstable.
Theorem~\ref{thm:formally real} generalizes these.

\begin{theorem}
\label{thm:formally real}
Suppose that $K$ is formally real.
Then $K$ is $\sopinfty$.
\end{theorem}

\begin{proof}
Let $\varphi(x,y)$ be the formula
$$ \exists z_1,z_2,z_3,z_4 \left[ x - y - 1 = z^2_1 + z^2_2 + z^2_3 + z^2_4\right]. $$
We show that $\varphi$ witnesses $\sopinfty$.
An application of Lagrange's four-square theorem shows that $K \models \varphi(m,m')$ for all integers $m > m'$.
Now suppose that $a_1,\ldots,a_n \in K$ and we have $K \models \{ \varphi(a_1,a_2),\ldots,\varphi(a_{n - 1},a_n),\varphi(a_n,a_1) \}$.
Then 
$$ -n = (a_1 - a_2 - 1) + (a_2 - a_3 - 1) + \dotsb + (a_{n - 1} - a_n - 1) + (a_n - a_1 - 1) $$
is a sum of squares, contradiction.
\end{proof}

Fix a prime $p$.
A field $K$ is $p$-adically closed if $K$ is elementarily equivalent to a finite extension of $\Qq_p$ and $K$ is \textbf{formally $p$-adic} if $K$ embeds into a $p$-adically closed field.
An equivalent definition (which we shall not need) is that there exists a $p$-valuation $v$ on $K$, i.e.\ $v$ is of mixed characteristic, the residue field is a finite extension of $\mathbb{F}_p$, and the interval $[0, v(p)]$ in the value group is finite.
Indeed, if $v$ is a $p$-valuation on $K$ then the so-called $p$-adic closure of $(K,v)$ is an elementary extension of a finite extension of $\Qq_p$.
See \cite{Prestel-roquette} for a comprehensive treatment of formally $p$-adic fields.

\begin{theorem}
  \label{thm:formally-p-adic}
  Suppose that $K$ is formally $p$-adic.
  Then $K$ is $\sopinfty$.
\end{theorem}




\begin{proof}
Let $F$ be a finite extension of $\Qq_p$ such that $K$ embeds into an elementary extension of $F$.
Let $v$ be the unique extension of the $p$-adic valuation on $\mathbb{Q}_p$ to $F$ and $\mathcal{O}_F$ be the valuation ring of $v$.

\meno  
By \cite[Proposition 4.7 and Proposition 4.8]{AnscombeDittmannFehm-Siegel} (applied to the base field $K=\mathbb{Q}$, the prime $\mathfrak p = p$ of $\mathbb{Q}$, and the relative type $\tau$ of $F/\mathbb{Q}_p$ in the terminology there), there exists a  parameter-free existential formula $\psi(x)$ such that $\psi(F) \subseteq \mathcal{O}_F$, and $\psi(\mathbb{Q}) = \mathbb{Z}_{(p)}$.
  (Note that the paper cited phrases the result in terms of a concrete ``diophantine family'' $D^\tau_{\mathfrak p, t_{\mathfrak p}, A, B}$, but this is effectively the same as a  existential formula with parameters from the base field $\mathbb{Q}$ \cite[Remark 3.2]{AnscombeDittmannFehm-Siegel}, and parameters from $\mathbb{Q}$ can be eliminated.)

\meno
Let $\varphi(x,y)$ be the formula
  \[ (y \ne 0) \land \exists z (\psi(z) \wedge y = p \cdot x \cdot z ) .\]

We show that $\varphi(x,y)$ witnesses $\sopinfty$ for $K$.
Suppose $m < m'$ are integers.
Then we have $\mathbb{Q} \models \varphi(p^m, p^{m'})$, since
$p^{m'}/(p \cdot p^m)\in \mathbb{Z} \subseteq \psi(\mathbb{Q})$.
Since $\varphi$ is existential, we have $K \models \varphi(p^m,p^{m'})$.
Thus the binary relation on $K$ defined by $\varphi$ admits an infinite chain.

\meno
Now suppose that $K$ satisfies 
\[
\Theta = \exists x_1,\ldots,x_n [\varphi(x_1,x_2) \land \dotsb \land \varphi(x_{n - 1},x_n) \land \varphi(x_n,x_1)].
\]
As $\Theta$ is existential and $K$ embeds into an elementary extension of $F$, we have $F \models \Theta$.
Hence there are $b_1, \dotsc, b_n \in F$ such that $F \models \{ \varphi(b_1, b_2), \dotsc, \varphi(b_{n-1}, b_n), \varphi(b_n, b_1) \}$.

\meno
As $\psi(F) \subseteq \mathcal{O}_F$ we see that $F \models \varphi(a,a')$ implies that $v(a) < v(a')$ for any $a,a' \in F$.
Thus we have $v(b_1) < v(b_2) < \dotsb < v(b_{n-1}) < v(b_n) < v(b_1)$, a contradiction.
\end{proof}

\bibliographystyle{amsalpha}
\bibliography{ref}

\end{document}